\documentclass[11pt,a4paper]{amsart}

\usepackage[utf8]{inputenc}
\usepackage[english]{babel}
\usepackage{amsmath}
\usepackage{amssymb}
\usepackage{amsfonts} 
\usepackage{amsthm}
\usepackage{mathrsfs}
\usepackage{graphicx}
\usepackage{enumitem}
\setlist[itemize]{leftmargin=*}
\setlist{itemsep=4pt plus 2pt minus 1pt,topsep=4pt plus 2pt minus 1pt}
\usepackage{xcolor}
\usepackage{url}
\usepackage{accents}
\usepackage{bm}
\usepackage{hyperref,doi}
\usepackage{mathtools}
\usepackage{etoolbox}
\usepackage{comment}
\usepackage{chngcntr}
\usepackage[a4paper,left=3cm,right=3cm,top=3.4cm,bottom=3.4cm]{geometry}
\usepackage{tikz,tikz-cd}
\usepackage{appendix}
\usepackage{scalerel}
\usepackage[usestackEOL]{stackengine}
\usepackage[stretch=30,shrink=30]{microtype}
\usepackage[strings]{underscore}
\usepackage[square,numbers]{natbib}

\setlength{\parskip}{3pt plus 2pt minus 2pt}
\linespread{1.25}

\makeatletter
\newcommand{\boxedred}[1]{\textcolor{red}{\fbox{\normalcolor\m@th$#1$}}}
\makeatother
\makeatletter
\newcommand{\eqlabel}[1]{\refstepcounter{equation}\textup{\tagform@{\theequation}}\label{#1}}
\makeatother

\newcommand{\N}{\mathbb{N}}
\newcommand{\Z}{\mathbb{Z}}

\newcommand{\R}{\mathbb{R}}
\newcommand{\C}{\mathbb{C}}

\newcommand{\floor}[1]{\left\lfloor #1\right\rfloor}
\newcommand\restr[2]{{
		\left.\kern-\nulldelimiterspace
		#1
		\vphantom{\big|}
		\right|_{#2}
}}

\newcommand{\de}{\partial}
\newcommand{\co}[1]{\operatorname{co}\left(#1\right)}

\newcommand{\mz}{\frac{1}{2}}

\newcommand{\ang}[1]{\left\langle#1\right\rangle}
\newcommand{\uno}{\bm{1}}
\newcommand{\supp}[1]{\operatorname{supp}\left(#1\right)}
\newcommand{\nin}{\not\in}

\newcommand{\weakto}{\rightharpoonup}
\newcommand{\weakstarto}{\stackrel{*}{\rightharpoonup}}

\newcommand{\cptsub}{\subset\hspace{-1pt}\subset}

\DeclareMathOperator{\diam}{diam}

\DeclareMathOperator{\dist}{dist}

\newtheoremstyle{plainstyle}
{\glueexpr\parskip*4\relax}                    
{\glueexpr\parskip*2\relax}                    
{}                   
{}                           
{\bfseries}                   
{.}                          
{.2em}                       
{\thmname{#1}\thmnumber{ #2}\thmnote{ (#3)}}  
\theoremstyle{plainstyle}

\newtheorem{definition}{Definition}[section]
\newtheorem{rmk}[definition]{Remark}

\newtheoremstyle{itstyle}
{\glueexpr\parskip*4\relax}                    
{\glueexpr\parskip*2\relax}                    
{\itshape}                   
{}                           
{\bfseries}                   
{.}                          
{.2em}                       
{\thmname{#1}\thmnumber{ #2}\thmnote{ (#3)}}  
\theoremstyle{itstyle}

\newtheorem{thm}[definition]{Theorem}
\newtheorem{lemmaen}[definition]{Lemma}
\newtheorem{corollary}[definition]{Corollary}

\newtheorem{conj}[definition]{Conjecture}
\newtheorem*{thm*}{Theorem}
\newtheorem*{corollary*}{Corollary}

\newcommand{\obar}[1]{\overline{#1}}

\newcommand{\set}[1]{\left\{#1\right\}}

\newcommand{\pa}[1]{\left(#1\right)}

\newcommand{\abs}[1]{\left|#1\right|}

\newcommand{\norm}[1]{\left\|#1\right\|}


\newcommand{\brapa}[1]{\left[#1\right)}

\newcommand{\vfd}{\mathbf{v}}
\newcommand{\envdim}{q}
\newcommand{\subman}{\mathcal{M}^m}

\newcommand{\good}{\mathcal{G}}

\newcommand{\bad}{\mathcal{B}}

\newcommand{\e}{\widetilde{e}}

\renewcommand{\bar}{\obar}
\renewcommand{\hat}{\widehat}
\renewcommand{\tilde}{\widetilde}
\renewcommand{\epsilon}{\varepsilon}

\def\media{
	\,\ThisStyle{\ensurestackMath{%
			\stackinset{c}{.2\LMpt}{c}{.5\LMpt}{\SavedStyle-}{\SavedStyle\phantom{\int}}}%
		\setbox0=\hbox{$\SavedStyle\int\,$}\kern-\wd0}\int
}

\counterwithin*{equation}{section}
\renewcommand{\theequation}{\thesection.\arabic{equation}}

\subjclass[2010]{49Q05, 49Q15, 49Q20, 58E20}
\keywords{Minimal surfaces, min-max, viscosity method, multiplicity one conjecture}

\begin{document}
	
	\raggedbottom

	\author[A.~Pigati]{Alessandro Pigati}

	\author[T.~Rivi{\`e}re]{Tristan Rivi{\`e}re}

	\thanks{(Pigati and Rivi{\`e}re) {\sc ETH Z\"urich, Department of Mathematics,
		R\"amistrasse 101, 8092 Z\"urich, Switzerland}. E-mail addresses: \href{mailto:alessandro.pigati@math.ethz.ch}{alessandro.pigati@math.ethz.ch} and \href{mailto:tristan.riviere@math.ethz.ch}{tristan.riviere@math.ethz.ch}.}

	\title[Multiplicity one for min-max minimal surfaces in arbitrary codimension]{A proof of the multiplicity one conjecture for min-max minimal surfaces in arbitrary codimension}

	\begin{abstract}
		Given any admissible $k$-dimensional family of immersions of a given closed oriented surface into an arbitrary closed Riemannian manifold, we prove that the corresponding min-max width for the area
		is achieved by a smooth (possibly branched) immersed minimal surface with multiplicity one and Morse index bounded by $k$.
	\end{abstract}

	\maketitle

	\section{Introduction}

	
	Recently, a new theory for the construction of branched immersed minimal surfaces of arbitrary topology, in an assigned closed Riemannian manifold $\subman$, was proposed in \cite{rivminmax}. This method is based on a penalization of the area functional by means of the second fundamental form $A$ of the immersion.
	
	Namely, for a fixed parameter $\sigma>0$, one first finds an immersion $\Phi:\Sigma\to\subman$ which is critical for the perturbed area functional
	\begin{align}\label{pertfct} A^{\sigma}(\Phi):=\int_\Sigma\,d\operatorname{vol}_{g_\Phi}+\sigma^2\int_\Sigma(1+\abs{A}_{g_\Phi}^2)^2\,d\operatorname{vol}_{g_\Phi}, \end{align}
	where $\Sigma$ is a fixed closed oriented surface and $g_\Phi$ is the metric induced by $\Phi$, with volume form $\operatorname{vol}_{g_\Phi}$. This functional $A^\sigma$ enjoys a sort of Palais--Smale condition up to diffeomorphisms.
	
	We should mention that the idea of considering perturbed functionals goes back to the paper \cite{sacks} by Sacks--Uhlenbeck, where a perturbation of the Dirichlet energy is used to build minimal immersed spheres. However, in order to find minimal immersed surfaces with higher genus, one should give up working with the Dirichlet energy and use a more tensorial functional like \eqref{pertfct}: among closed orientable surfaces, only the sphere has a unique conformal structure (up to diffeomorphisms) and, as a consequence, a harmonic map (i.e. a critical point for the Dirichlet energy) $\Phi:\Sigma\to\subman$ could fail to be conformal and minimal if $\Sigma$ has positive genus. In principle, one can overcome this issue by introducing the conformal structure as an additional parameter in the variational problem: this program was carried out by Zhou in \cite{zhou}.
	
	Considering any sequence $\sigma_j\downarrow 0$, one gets a sequence $\Phi_j:\Sigma_j\to M$ of conformal immersions (with area bounded above and below), where $\Sigma_j$ denotes $\Sigma$ endowed with the induced conformal structure. Assuming for simplicity that we are dealing with a constant conformal structure (in general one gets a limiting Riemann surface in the Deligne--Mumford compactification), the sequence $\Phi_j$ is then bounded in $W^{1,2}$ and we can consider its weak limit $\Phi_\infty$, up to subsequences. A priori it is not clear whether the strong $W^{1,2}$-convergence holds, even away from a finite bubbling set. However, in \cite{rivminmax} the second author shows that, if the sequence $\sigma_j$ is carefully chosen so as to satisfy a certain \emph{entropy condition}, then the surfaces $\Phi_j(\Sigma_j)$ converge to a \emph{parametrized stationary varifold} (a notion introduced in \cite{rivminmax,pigriv} and recalled in Section \ref{backsec} below) which we call $(\Sigma_\infty,\Theta_\infty,N_\infty)$ in the present paper. The limiting multiplicity $N_\infty$ a priori could be bigger than one.
	
	
	A consequence of the main regularity result contained in \cite{pigriv} is that the multiplicity $N_\infty$ is locally constant.
	This result, which is optimal for the class of parametrized stationary varifolds, leaves nonetheless open the question whether one can have $N_\infty>1$ on some connected component of $\Sigma_\infty$.


	This question should be compared with the \emph{multiplicity one conjecture} by Marques and Neves. In \cite{marnevmult}, the following upper bound for the Morse index of a minimal hypersurface with locally constant multiplicity is established: if
	\begin{align*} \Sigma=\sum_{j=1}^\ell n_j\Sigma_j \end{align*}
	is a minimal hypersurface with locally constant multiplicity, given by a min-max with $k$ parameters in the context of Almgren--Pitts theory, then
	\begin{align*} \operatorname{index}(\supp{\Sigma})\le k,\qquad\supp{\Sigma}:=\bigsqcup_{j=1}^\ell\Sigma_j. \end{align*}
	
	In other words, this is a bound for the Morse index of the hypersurface obtained by replacing all the multiplicities $n_j$ with $1$.
	In order for this estimate to give more information about $\Sigma$, or at least its unstable part, the authors make the following conjecture.
	
	\begin{conj}[Multiplicity one conjecture] For generic metrics on $M^{n+1}$, with $3\le n+1\le 7$, two-sided unstable components of closed minimal hypersurfaces obtained by min-max methods must have multiplicity one. \end{conj}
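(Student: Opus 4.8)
The plan is to deduce the conjecture from a regularized min-max problem in which multiplicity one is built in, and then to pass to the limit, via the prescribed-mean-curvature (PMC) min-max of Zhou--Zhu. Fix once and for all a smooth positive function $h_0$ on $M$, and for $t>0$ consider, on Caccioppoli sets $\Omega\subset M$, the functional
\[ \mathcal{A}^{th_0}(\Omega) := \mathcal{H}^n(\partial^*\Omega) - t\int_\Omega h_0\, d\mathrm{vol}_g, \]
whose critical points are, modulo regularity, hypersurfaces whose mean curvature with respect to the inner unit normal equals $th_0$. Two features make this helpful: a sweepout by reduced boundaries $\partial^*\Omega$ has multiplicity one by construction, and — crucially — since $h_0>0$ the bulk term $-t\int_\Omega h_0$ supplies a definite gradient, so any incipient doubled sheet in a min-max sequence can be pushed apart so as to strictly decrease $\mathcal{A}^{th_0}$; higher multiplicity therefore cannot survive in the min-max limit, with no blow-up analysis needed.

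First I would run the Almgren--Pitts $k$-parameter min-max for $\mathcal{A}^{th_0}$, obtaining widths $\omega_k(\mathcal{A}^{th_0})$, a pull-tight procedure, and — the technical core — a regularity theorem: $\omega_k(\mathcal{A}^{th_0})$ is attained by a smooth, closed, embedded, two-sided hypersurface $\Sigma_t$ with $H_{\Sigma_t}=th_0$, of multiplicity one, and with $\mathcal{A}^{th_0}$-Morse index at most $k$. The regularity statement is obtained by adapting Pitts--Schoen--Simon together with the curvature estimates for stable PMC/CMC hypersurfaces, and it is here that the restriction $n+1\le 7$ is used; the index bound follows from deformation arguments in the style of Marques--Neves, applied to $\mathcal{A}^{th_0}$ in place of the area, once one has the correct notion of ``almost minimizing in annuli'' for $\mathcal{A}^{th_0}$.

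Then I would let $t\downarrow 0$. A two-sided comparison — test an arbitrary sweepout, and conversely use that $\mathcal{A}^{th_0}$ is a bounded perturbation of the area — gives $\omega_k(\mathcal{A}^{th_0})\to\omega_k(M,g)$, the ordinary Almgren--Pitts $k$-width, so the $\Sigma_t$ form an almost-optimal sequence for the unperturbed problem, with uniformly bounded area and index. By Sharp's compactness theorem they converge, along a subsequence and smoothly away from finitely many points, to a closed minimal hypersurface $\Sigma_0$ realizing $\omega_k(M,g)$, with some integer multiplicity $m_\alpha$ on each component $\Sigma_0^\alpha$. Now invoke White's bumpy metric theorem: for $g$ in a generic (comeagre) set of metrics, every closed minimal hypersurface in $M$ is nondegenerate, and since $\Sigma_0$ arises from min-max its unstable part has index $\ge 1$. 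If a two-sided component $\Sigma_0^\alpha$ occurred with $m_\alpha\ge 2$, then — because for small $t$ the sheets of $\Sigma_t$ near $\Sigma_0^\alpha$ have mean curvature of a fixed sign and obey uniform curvature estimates, so the maximum principle forces them to be pairwise disjoint and locally graphical, hence linearly ordered — the normalized vertical separation between two consecutive sheets would converge to a nonnegative, nontrivial solution of the Jacobi equation on $\Sigma_0^\alpha$, which by the strong maximum principle is strictly positive; this makes $\Sigma_0^\alpha$ stable, contradicting index $\ge 1$. Hence $m_\alpha=1$ on every two-sided component, which is exactly the assertion; tracking the index through the limit by lower semicontinuity together with the Lusternik--Schnirelmann inequality moreover pins the total Morse index of $\Sigma_0$ to equal $k$.

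The main obstacle is the middle step. Proving full regularity — smoothness and embeddedness, not merely that the min-max varifold has generalized mean curvature $th_0\nu$ — for the $\mathcal{A}^{th_0}$-critical points is delicate because the prescribing function acquires a zero set only in the limit, so the stable-regularity and replacement arguments for PMC hypersurfaces must be run with uniform constants; and obtaining the sharp index bound $\le k$ for the regularized problem requires redoing the Marques--Neves deformation theory with the modified functional. A secondary difficulty lies in the $t\downarrow 0$ limit: one must exclude loss of area or index and, in particular, rule out that a thin catenoidal neck forms on $\Sigma_t$ and mediates a jump in multiplicity, which reduces to a Schoen-type half-space and curvature-estimate analysis on the almost-minimal sheets.
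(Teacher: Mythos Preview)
The paper does not prove this statement. It is stated as a \emph{conjecture} of Marques and Neves, cited in the introduction purely as context and motivation; the paper explicitly attributes the (then-announced) resolution in the Almgren--Pitts setting to Zhou \cite{zhoubumpy} and the Allen--Cahn case for $3$-manifolds to Chodosh--Mantoulidis \cite{chma}. There is therefore no ``paper's own proof'' to compare against.

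What the paper actually proves is Theorem~\ref{main}, a multiplicity-one result in a \emph{different} framework: parametrized minimal surfaces in arbitrary codimension arising from the viscosity relaxation $A^\sigma$ of \eqref{pertfct}. The method there has nothing to do with PMC functionals or Caccioppoli sets; it tracks a macroscopic projected multiplicity across scales, from scale $1$ down to scale $\sqrt{\sigma}$, using compactness of parametrized stationary varifolds (Theorem~\ref{blackbox}), the regularity theory of \cite{pigriv}, and a Wente-type estimate at the smallest scale (Lemma~\ref{sqrtsigma}).

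Your proposal, on the other hand, is a reasonable outline of Zhou's PMC approach to the original Marques--Neves conjecture. As a sketch of \cite{zhoubumpy} it is broadly on target, and you correctly identify the main difficulties (regularity and index bounds for the $\mathcal{A}^{th_0}$ min-max, and controlling the $t\downarrow 0$ limit). But it is simply not what this paper does, and it addresses a different statement than the paper's main theorem.
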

	
	It is natural to demand for extra information for one-sided stable components with unstable double cover, as well.	
	Marques and Neves were able to prove this conjecture for one-parameter sweepouts, leaving the general case open. For metrics with positive Ricci curvature, related results were already obtained by Marques and Neves in \cite{marnevrig} and later by Zhou in \cite{zhouric}.
	
	Further results, such as the two-sidedness of $\Sigma$ when the metric has positive Ricci curvature, were obtained by Ketover, Marques and Neves in \cite{ketmarnev}, using the catenoid estimate.
	
	We also mention that Ketover, Liokumovich and Song in \cite{ketlio,song} started to settle the generic, one-parameter case for the simpler and more effective Simon--Smith variant of Almgren--Pitts theory, specially designed for 3-manifolds.
	
	Very recently, in \cite{chma}, Chodosh and Mantoulidis established the conjecture for bumpy metrics in 3-manifolds, i.e. when $n=2$, in the setting of Allen--Cahn level set approach. Also, half a year after this paper was written, Zhou announced a proof for bumpy metrics in all dimensions $3\le n+1\le 7$, in the context of Almgren--Pitts theory (see \cite{zhoubumpy}).
	
	The importance of this conjecture in relation to the Morse index of $\Sigma$ is twofold. First of all, there is no satisfactory definition for the Morse index of an embedded minimal hypersurface with multiplicity bigger than one: such $\Sigma$ could be thought as the limiting object of many qualitatively different sequences, e.g. the elements of the sequence could realize different covering spaces of the limit, or more pathologically they could be minimal and have many catenoidal necks (hence $\Sigma$ would be the limit of a sequence of highly unstable hypersurfaces).
	
	Also, if one is able to establish a lower bound on the Morse index such as
	\begin{align*} k\le\operatorname{index}(\supp{\Sigma})+\operatorname{nullity}(\supp{\Sigma}), \end{align*}
	then the multiplicity one conjecture gives infinitely many \emph{geometrically distinct} minimal hypersurfaces, provided there exists at least one for every value of $k$. This was precisely the strategy used in \cite{chma} to prove Yau's conjecture for generic metrics: in \cite{chma} the authors obtained the multiplicity one result and the equality $\operatorname{index}(\Sigma)=k$ (the nullity vanishing automatically for bumpy metrics). This was later extended to higher ambient dimension (but in codimension one) in \cite{zhoubumpy}.
	
	In this work we establish the natural counterpart of this conjecture in our setting, namely for minimal surfaces produced by the viscous relaxation method.
	
	\begin{thm}\label{main}
		We have $N_\infty\equiv 1$.
	\end{thm}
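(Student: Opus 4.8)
The natural approach is by contradiction: assume $N_\infty\ge 2$ on some connected component $\Sigma_\infty^{0}$ of $\Sigma_\infty$, and derive a contradiction with the fact that the $\Phi_j$ are critical points of $A^{\sigma_j}$ chosen along a sequence $\sigma_j\downarrow 0$ satisfying the entropy condition (and, if needed, of bounded Morse index). First I would localize the picture. By the regularity theory of \cite{pigriv} the convergence $\Phi_j\to\Phi_\infty$ is smooth and graphical away from a finite set $\{p_1,\dots,p_L\}\subset\Sigma_\infty$ (the points where $\abs{A_{\Phi_j}}$ concentrates, together with the branch points of $\Phi_\infty$). Pick $p\in\Sigma_\infty^{0}\setminus\{p_1,\dots,p_L\}$ at which $\Phi_\infty$ is an embedding and a small disk $K$ around $p$ with $\obar K\subset\Sigma_\infty^{0}\setminus\{p_1,\dots,p_L\}$; then, for $j$ large, a large piece of $\Phi_j(\Sigma_j)$ is a union of $N_\infty$ sections $u_j^{1},\dots,u_j^{N_\infty}$ of the normal bundle of $\Phi_\infty$ over $\Phi_\infty(K)$, with $u_j^{i}\to 0$ in $C^\infty_{\mathrm{loc}}$. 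What must eventually be used is that these sheets are all images of the \emph{single connected} surface $\Sigma$, hence glued to one another through the rest of $\Sigma$ — either through further sheets over $\Sigma_\infty$ (a branched cover) or through necks attached to the bubbles.

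The heart of the matter is a blow-up at the scale of the mutual separation of the sheets. Set $d_j:=\max_{\obar K}\diam\{u_j^{1}(x),\dots,u_j^{N_\infty}(x)\}$, so $d_j\downarrow 0$ but $d_j>0$. Each $u_j^{i}$ solves the Euler--Lagrange system of $A^{\sigma_j}$, a fourth-order perturbation with small coefficient $\sigma_j^{2}$ of the minimal surface system; a preliminary step is to show, using the entropy condition (which controls $\sigma_j^{2}\int_{\Sigma_j}(1+\abs{A_{\Phi_j}}^{2})^{2}$ and an associated logarithm), that the fourth-order terms are negligible at the scale $d_j$ on $K$. Granting this, the rescaled sections $d_j^{-1}(u_j^{i}-u_j^{1})$ are bounded in $C^\infty_{\mathrm{loc}}(K)$ and converge, along a subsequence, to normal sections $w^{1}=0,w^{2},\dots,w^{N_\infty}$ solving the \emph{linear} Jacobi system $\mathcal J_{\Phi_\infty}w^{i}=0$ on $K$, not all equal. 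Here the genuinely new difficulty of \emph{arbitrary codimension} appears: when $\Phi_\infty$ has codimension one the sheets are linearly ordered and $w^{N_\infty}-w^{1}\ge 0$ is a \emph{positive} Jacobi function by the strong maximum principle, so $\Phi_\infty$ is forced to be stable and degenerate; in higher codimension the sheets carry no such order, and one must instead extract sign information from a suitable scalar quantity built from the sheets (their transverse spread $s_j$, or their pairwise distances), which after normalization converges to a nonnegative, non-trivial solution of a scalar Jacobi-type differential inequality, and/or from a decomposition of the local branched cover into its twisted pieces. I expect getting the right positivity/rigidity statement for $\Phi_\infty$ out of this limiting Jacobi data to be the main analytic obstacle.

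Finally, one must turn this rigidity into a contradiction. The available inputs are: the entropy condition, which forces the penalization and any neck energy to be asymptotically negligible, so that the min-max width equals $N_\infty\operatorname{Area}(\Phi_\infty)$ plus the areas of the bubbles; the bound on the Morse index of $\Phi_j$ for $A^{\sigma_j}$; and the fact that the domain $\Sigma$ is fixed. Using the limiting Jacobi field one pulls the $N_\infty$ sheets apart (or partially merges them) inside an admissible variation of the slices of the family near the maximal parameter, keeping the area unchanged to second order while strictly lowering $\max A^{\sigma_j}$ below the width for $j$ large, contradicting the choice of $\Phi_j$; alternatively, one shows that the $N_\infty$-sheeted configuration over the degenerate $\Phi_\infty$ forces the Morse index to exceed $k$. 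I expect this last step — converting the limiting rigidity of $\Phi_\infty$ into a strict improvement of the width in the \emph{parametrized, fixed-domain} min-max, where the bumpy-metric shortcut of \cite{marnevmult,chma} is unavailable and the competitor must stay within immersions of the fixed $\Sigma$ — to be the second, and probably harder, obstacle.
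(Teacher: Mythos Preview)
Your proposal follows the sheet-separation / Jacobi-field strategy familiar from codimension-one min-max (Marques--Neves, Chodosh--Mantoulidis, Zhou). The paper does something entirely different, and your outline has a circularity that would block it even before the two obstacles you flag.

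The circularity is at the very first step. You write that ``by the regularity theory of \cite{pigriv} the convergence $\Phi_j\to\Phi_\infty$ is smooth and graphical away from a finite set''. But \cite{pigriv} gives regularity of the \emph{limit} parametrized stationary varifold, not strong convergence of the approximating sequence. A priori one only has weak $W^{1,2}$ convergence $\Phi_j\weakto\Phi_\infty$ together with varifold convergence; the $\Phi_j$ are critical points of the fourth-order perturbed functional $A^{\sigma_j}$, not minimal surfaces, and no epsilon-regularity is available for them that would upgrade this to $C^\infty$ sheets. In fact the paper states strong $W^{1,2}$ convergence as Corollary~\ref{strongconv}, a \emph{consequence} of Theorem~\ref{main}. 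So the sheet decomposition $u_j^1,\dots,u_j^{N_\infty}$ you want to blow up is not known to exist until after the theorem is proved. The same applies to the Morse index input you invoke at the end: the index bound is derived from Theorem~\ref{main}, not fed into it.

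The paper's argument avoids all of this by never looking at sheets. It introduces a \emph{macroscopic multiplicity} $n(\Psi,B_r^2(z),B_t^\Pi(p))$, the nearest integer to the average cardinality of $(\Pi\circ\Phi_j)^{-1}$ over a small target disk, computed directly from the immersion $\Phi_j$ at a pair of scales $(r,\ell)$ in domain and codomain. Three lemmas (Lemmas~\ref{closetoint}, \ref{stabdist}, \ref{smallcontrib}) show, via compactness-and-contradiction against Theorem~\ref{blackbox} and the constancy theorem, that one can pass from any scale $(r,\ell)$ satisfying a mild geometric control to a strictly smaller one with the \emph{same} macroscopic multiplicity. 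Iterating down to $\ell\sim\sqrt{\sigma_j}$, the entropy condition forces $\int|A|^4\,d\mathrm{vol}$ to be small; a Wente/H\'elein frame argument (Lemma~\ref{sqrtsigma}) then gives a $C^1$ control making $\Pi\circ\Phi_j$ a diffeomorphism on a small disk, so the multiplicity there is $1$. Propagating back up gives $n=1$ at the original scale, which is then matched with $N_\infty$. No Jacobi fields, no ordering of sheets, no genericity, and the argument is insensitive to codimension.
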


	We stress that this result holds in \emph{arbitrary codimension} and \emph{without any genericity assumption}.
	This should be seen as a multiplicity one statement from the perspective of the parametrization \emph{domain},
	in that localization in the domain (away from branch points) gives a genuine embedded minimal surface, but a priori it
	does not exclude multiple covers of the image surface $\Theta_\infty(\Sigma_\infty)$ globally.
	It seems to be optimal for a min-max approach involving parametrizations, rather than e.g. approaches involving level sets of functions, and it is sufficient to obtain an upper bound on the Morse index. This bound, detailed in \cite{rivlower}, relies on having a branched immersion at our disposal, for which a good definition of Morse index is available.
	
	We remark that, in view of earlier work in \cite{rivtarget}, Theorem \ref{main} would imply by itself the main result of \cite{pigriv}, for parametrized stationary varifolds arising as a limit of stationary points for the relaxed functionals. However, the proof of Theorem \ref{main} relies substantially on the regularity result obtained in \cite{pigriv}, needed in several compactness arguments.
	
	The main idea is to define a sort of \emph{macroscopic multiplicity}, on balls $B_\ell^q(p)$ in an ambient Euclidean space $\R^q\supseteq\subman$, before passing to the limit (i.e. looking at the immersed surfaces $\Phi_j$ rather than their limit).
	This macroscopic multiplicity is roughly the closest integer to the average of a \emph{projected multiplicity},
	issued by the map $\Pi\circ\restr{\Phi_j}{B_r^2(z)}$, where $B_r^2(z)$ is a small domain ball and $\Pi$ a 2-plane close to the image of $\restr{\Phi_j}{\de B_r^2(z)}$. Then we will use a continuity argument to show that this number stays constant as we pass from scale $1$ to scale $\sqrt{\sigma_j}$. At the latter scale we have a very clear understanding of the behaviour of $\Phi_j$ and in particular we are able to say that here the macroscopic multiplicity equals $1$. Thus the same holds at the original scale and this is sufficient to get $N_\infty\equiv 1$.
	
	Most of the work is contained in Section \ref{coresec}. A more detailed discussion of the strategy, together with an informal explanation of the technical statements contained in Section \ref{coresec}, is deferred to the beginning of that section.
	
	On the other hand, Section \ref{harmsec} contains two auxiliary facts about harmonic maps, the second of which is rather technical in nature, while Section \ref{multonesec} is more standard and uses a simple covering argument, together with some ideas similar to those in Section \ref{coresec}.
	
	\begin{corollary}\label{strongconv}
		If there is no bubbling or degeneration of the underlying conformal structure, we have strong $W^{1,2}$-convergence $\Phi_k\to\Phi_\infty$. In general we have a bubble tree convergence.
	\end{corollary}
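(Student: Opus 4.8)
The plan is to obtain the corollary from Theorem \ref{main} by upgrading the weak convergence $\Phi_j\weakto\Phi_\infty$ to convergence of the Dirichlet energies. Consider first the favourable case in which the conformal structures of the $\Sigma_j$ do not degenerate and no energy concentration occurs: pulling back by diffeomorphisms I would arrange that $\Sigma_j=\Sigma$ with conformal structures converging to a fixed one, that $\Phi_j\weakto\Phi_\infty$ in $W^{1,2}(\Sigma,\R^q)$, and that $\lim_{r\to0}\limsup_j\int_{B_r(p)}\abs{\nabla\Phi_j}^2=0$ for every $p\in\Sigma$. Since $\Sigma$ is compact, the embedding $W^{1,2}(\Sigma)\inject L^2(\Sigma)$ is compact, so $\Phi_j\to\Phi_\infty$ strongly in $L^2$; as $W^{1,2}(\Sigma,\R^q)$ is a Hilbert space, strong $W^{1,2}$-convergence is then equivalent to
\[ \int_\Sigma\abs{\nabla\Phi_j}^2\,d\mathrm{vol}\ \longrightarrow\ \int_\Sigma\abs{\nabla\Phi_\infty}^2\,d\mathrm{vol}. \]
Because $\Phi_j$ is (weakly) conformal, the left-hand side equals $2\,\mathrm{Area}(\Phi_j)$, where $\mathrm{Area}(\Phi_j)=\norm{V_j}(\subman)$ is the mass of the varifold $V_j$ carried by $\Phi_j(\Sigma_j)$ (counted with domain multiplicity).

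The key step is then Theorem \ref{main}. Recall from Section \ref{backsec} that $V_j\to V_\infty=(\Sigma_\infty,\Theta_\infty,N_\infty)$ as varifolds, with $\Theta_\infty$ a weakly conformal, possibly branched, minimal immersion, and that in the present non-degenerate and bubbling-free situation $\Sigma_\infty=\Sigma$ and $\Theta_\infty=\Phi_\infty$. Since $\subman$ is compact, the total mass is continuous along the varifold convergence, so $\mathrm{Area}(\Phi_j)=\norm{V_j}(\subman)\to\norm{V_\infty}(\subman)$; and, using $N_\infty\equiv1$ together with the weak conformality of $\Phi_\infty$,
\[ \norm{V_\infty}(\subman)=\int_{\Sigma_\infty}N_\infty\,J_{\Theta_\infty}\,d\mathrm{vol}=\int_{\Sigma}J_{\Phi_\infty}\,d\mathrm{vol}=\frac12\int_\Sigma\abs{\nabla\Phi_\infty}^2\,d\mathrm{vol}. \]
Since $\int_\Sigma\abs{\nabla\Phi_j}^2=2\,\mathrm{Area}(\Phi_j)$, chaining the last displays gives $\int_\Sigma\abs{\nabla\Phi_j}^2\to\int_\Sigma\abs{\nabla\Phi_\infty}^2$, hence the asserted strong convergence. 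Note that without Theorem \ref{main} one would only reach $\int_\Sigma\abs{\nabla\Phi_j}^2\to\int_{\Sigma_\infty}N_\infty\abs{\nabla\Theta_\infty}^2$, with the excess over $\int\abs{\nabla\Phi_\infty}^2$ being precisely the defect produced by a multiplicity $N_\infty>1$; so all the content of this case lies in Theorem \ref{main} itself.

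In the general case I would run the standard bubble tree construction: first pass to the Deligne--Mumford limit of the conformal structures, producing a nodal surface, and then, at each of the finitely many concentration points, rescale at the appropriate dyadic scales and extract finitely many (branched minimal) bubbles, attached so as to form a bubble tree. On each component of this tree the argument above applies in the same way---no residual bubbling and $N_\infty\equiv1$---so after rescaling the maps converge strongly in $W^{1,2}_{\mathrm{loc}}$ to the corresponding piece of $\Theta_\infty$. The one extra ingredient needed to assemble these pieces into a genuine bubble tree convergence statement is the absence of Dirichlet energy loss in the degenerating necks and in the annuli separating a bubble from its base (\emph{no neck energy}); this is the point I expect to be the main obstacle. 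I would take it from the energy quantization already established in \cite{rivminmax,pigriv}, or else prove it directly by the usual Pohozaev and three-annulus estimates adapted to $A^\sigma$, using the entropy condition to absorb the error terms coming from the $\sigma^2\int(1+\abs{A}^2)^2$ penalization. Granted this, $\norm{V_\infty}(\subman)$ is the sum over the components of the tree of their multiplicity-one areas with nothing hidden in the necks, which is exactly the assertion of bubble tree convergence.
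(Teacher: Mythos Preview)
Your argument is correct and matches the paper's (implicit) derivation: the corollary is not given a separate proof there, being immediate from \eqref{structn} and Theorem~\ref{main}. One point deserves care: you assert $\Theta_\infty=\Phi_\infty$ and use the weak conformality of $\Phi_\infty$, but a priori $\Phi_\infty=\Theta_\infty\circ\varphi_\infty$ with $\varphi_\infty$ merely quasiconformal; it is only \emph{after} knowing $N_\infty\equiv 1$ that the distortion bound $(\sup N_\infty)^2=1$ (cf.\ the proof of Theorem~\ref{blackbox} and \cite[Section~4]{pigriv}) forces $\varphi_\infty$ to be conformal and hence $\Phi_\infty$ weakly conformal. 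Alternatively, bypass this by sandwiching: \eqref{structn} with $N_\infty=1$ gives
\[
\lim_k\int_\Sigma\tfrac{1}{2}|\nabla\Phi_k|^2=\int_\Sigma|\de_1\Phi_\infty\wedge\de_2\Phi_\infty|\le\int_\Sigma\tfrac{1}{2}|\nabla\Phi_\infty|^2\le\liminf_k\int_\Sigma\tfrac{1}{2}|\nabla\Phi_k|^2,
\]
forcing equality throughout (and recovering the conformality of $\Phi_\infty$ as a by-product). The bubble-tree discussion is in line with what the paper asserts without further detail.
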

	
	Theorem \ref{main} and Corollary \ref{strongconv} pave the way to obtain meaningful Morse index bounds.
	Indeed, although Theorem \ref{main} does not rule out the possibility of having a surface covered multiple times by $\Phi_\infty$, a crucial advantage of having a parametrization at our disposal is that we have a reasonable definition of Morse index and nullity: they are defined with respect to the area functional and variations in $C^\infty_c(\Sigma_\infty\setminus\set{z_1,\dots,z_s})$, the points $z_1,\dots,z_s$ being the branch points of the immersion $\Phi_\infty$.\footnote{Although we are dealing with a weakly conformal map $\Phi_\infty$, for which area and energy are the same, it is important to remark that the Morse indices for area and energy, denoted $\operatorname{index}_A$ and $\operatorname{index}_E$ respectively, should not be expected to agree. The relationship between the two is a subtle problem: in this direction, we mention the inequality $\operatorname{index}_E(\Psi)\le\operatorname{index}_A(\Psi)\le\operatorname{index}_E(\Psi)+r$ established in \cite{ejiri}, for a branched minimal immersion $\Psi$, where $r=r(g,b)$ depends on the genus and the number of branch points of $\Psi$.}
	
	The natural expected inequalities would be
	\begin{align*}
		\operatorname{index}(\Phi_\infty)\le k\le\operatorname{index}(\Phi_\infty)+\operatorname{nullity}(\Phi_\infty).
	\end{align*}
	
	An abstract framework to show upper bounds for the Morse index, dealing with general penalized functionals on Banach manifolds, is developed in \cite{miche}. Combining Corollary \ref{strongconv} with the general result obtained in \cite{miche} and with \cite{rivlower}, we reach the following conclusion (we refer the reader to \cite{miche} for the notion of \emph{admissible family}).
	
	\begin{corollary}
		Given an admissible family $\mathcal A\subseteq\mathcal{P}(\text{Imm}(\Sigma,\subman))$ of dimension $k$ and calling
		\begin{align*} W_{\mathcal A}:=\inf_{A\in\mathcal{A}}\sup_{\Phi\in A}\operatorname{area}(\Phi) \end{align*}
		the \emph{width} of $\mathcal{A}$, there exists a (possibly branched) minimal immersion $\Phi$ of a closed surface $S$ into $\subman$ such that
		\begin{itemize}[leftmargin=\widthof{(iii) },font=\normalfont]
			\item[(i)] $\operatorname{genus}(S)\le\operatorname{genus}(\Sigma)$,
			\item[(ii)] $W_{\mathcal A}=\operatorname{area}(\Phi)$,
			\item[(iii)] $\operatorname{index}(\Phi)\le k$.
		\end{itemize}
	\end{corollary}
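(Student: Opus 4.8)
The plan is to assemble the Corollary from machinery already in place: the viscous relaxation of \cite{rivminmax}, the regularity theory of \cite{pigriv}, Theorem \ref{main}, Corollary \ref{strongconv}, the abstract index-upper-bound framework of \cite{miche}, and the index bound of \cite{rivlower}. First I would fix the admissible family $\mathcal A$ of dimension $k$ and, for each small $\sigma>0$, run the min-max over (the image of) $\mathcal A$ for the perturbed functional $A^\sigma$ of \eqref{pertfct}. Since $A^\sigma$ satisfies a Palais--Smale condition up to diffeomorphisms on the relevant Banach manifold of immersions, this produces a critical point $\Phi_\sigma\colon\Sigma\to\subman$ of $A^\sigma$ with $A^\sigma(\Phi_\sigma)\to W_{\mathcal A}$ as $\sigma\downarrow 0$; moreover the abstract theory of \cite{miche} provides the bound $\operatorname{index}(\Phi_\sigma)\le k$ at the level of the relaxed functional. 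I would then select a sequence $\sigma_j\downarrow 0$ obeying the entropy condition of \cite{rivminmax}, so that, after a conformal reparametrization, one has conformal immersions $\Phi_j\colon\Sigma_j\to\subman$ with area bounded above and below.

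Next I would invoke the compactness theory of \cite{rivminmax,pigriv}: up to subsequences, and up to degeneration of the conformal structure in the Deligne--Mumford sense together with the usual bubbling, the $\Phi_j$ converge to a parametrized stationary varifold $(\Sigma_\infty,\Theta_\infty,N_\infty)$, and by the regularity result of \cite{pigriv} the map $\Theta_\infty$ is a (possibly branched) minimal immersion. Standard bubble-tree bookkeeping shows that both the conformal degeneration and the bubbling can only split off spheres or lower the genus, so the domain $S$ — the main component together with the attached bubbles — satisfies $\operatorname{genus}(S)\le\operatorname{genus}(\Sigma)$, which is (i). At this point Theorem \ref{main} gives $N_\infty\equiv 1$, and hence by Corollary \ref{strongconv} the convergence upgrades to strong $W^{1,2}$ (bubble tree convergence in general); in particular there is no loss of area in the limit, so that $W_{\mathcal A}=\lim_j\operatorname{area}(\Phi_j)=\operatorname{area}(\Phi)$ with $\Phi:=\Theta_\infty$ extended over the bubble components, giving (ii).

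Finally, for (iii) I would appeal to \cite{rivlower}, where the Morse index bound $\operatorname{index}(\Phi)\le k$ for the limiting branched minimal immersion is established: its proof uses the abstract upper-bound principle for penalized functionals on Banach manifolds from \cite{miche}, applied to the relaxed functionals $A^{\sigma_j}$ with their bound $\operatorname{index}(\Phi_{\sigma_j})\le k$, together with the strong $W^{1,2}$ (bubble tree) convergence of Corollary \ref{strongconv}, which is what allows one to pass the index inequality to the limit while simultaneously controlling the index carried by the main component and by each bubble.

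The genuine difficulty lies entirely upstream, in Theorem \ref{main} and the ensuing Corollary \ref{strongconv}: without multiplicity one, the $\Phi_{\sigma_j}$ could degenerate along qualitatively different sequences (multiple covers, catenoidal necks), the convergence would be merely weak, and the Morse index of $\Phi_\infty$ would bear no controllable relation to that of $\Phi_{\sigma_j}$, so neither \cite{miche} nor \cite{rivlower} would be applicable. Once strong/bubble-tree convergence is available, the remaining obstacle in the Corollary proper is routine bookkeeping — conservation of area across the bubble tree, non-increase of the genus, additivity of the Morse index over components — together with verifying that the family $\mathcal A$ and the functionals $A^\sigma$ satisfy the hypotheses required by the abstract frameworks of \cite{miche} and \cite{rivlower}.
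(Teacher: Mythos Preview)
Your proposal is correct and matches the paper's approach: the paper does not give a standalone proof of this Corollary but simply states that it follows by combining Corollary~\ref{strongconv} with the abstract index framework of \cite{miche} and the result of \cite{rivlower}, which is precisely the assembly you outline.
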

	
	However, proving the second inequality, namely $k\le\operatorname{index}(\Phi)+\operatorname{nullity}(\Phi)$, seems to require a finer understanding of the convergence $\Phi_k\to\Phi_\infty$. We hope to be able to deal with this question elsewhere.
	
	Also, it would be interesting to adapt Gromov's notion of volume spectrum (and higher codimension generalizations), used to produce infinitely many minimal hypersurfaces in many settings, to the present situation. To this aim, a natural topological question concerns how much \emph{genus} is needed to realize a nontrivial $p$-sweepout (in the sense of Gromov--Guth), and how to realize the sweepout within the space of immersions or in an appropriate closure of it.
	
	\section{Notation}\label{notsec}
	
	\begin{itemize}
		\item We will assume, without loss of generality, that $\subman$ is isometrically embedded in some Euclidean space $\R^q$. Given $p\in\subman$ and $\ell>0$, we set $\subman_{p,\ell}:=\ell^{-1}(\subman-p)$.
	
		\item In what follows, $\Pi$ will always denote a 2-plane through the origin, which we identify with the corresponding orthogonal projection $\Pi:\R^q\to\Pi$. We call $\Pi^\perp$ the orthogonal $(q-2)$-subspace, identified with the corresponding orthogonal projection. Given 2-planes $\Pi,\Pi'$, we denote by $\dist(\Pi,\Pi')$ an arbitrary distance on the Grassmannian $\text{Gr}(2,\R^q)$, e.g. the one induced by Pl\"ucker's embedding of $\text{Gr}(2,\R^q)$ into the projectivization of $\Lambda_2\R^q$.
	
		The adjoint maps, which are just the inclusions $\Pi\hookrightarrow\R^q$ and $\Pi^\perp\hookrightarrow\R^q$, are denoted $\Pi^*$ and $(\Pi^\perp)^*$, so that
		\begin{align*} \text{id}_{\R^q}=\Pi^*\Pi+(\Pi^\perp)^*\Pi^\perp. \end{align*}
		Also, $\Pi_0$ is the canonical 2-plane, so that $\Pi_0:\R^q\to\R^2$ is the projection onto the first two coordinates, while $\Pi_0^\perp:\R^q\to\R^{q-2}$ is the projection onto the remaining $q-2$.
	
		\item We call $B_r^2(x)$ the ball of center $x$ and radius $r$ in the plane $\C=\R^2$, while $B_s^q(p)$ will denote the ball of center $p$ and radius $s$ in $\R^q$. Given $p\in\Pi$, we call $B_s^\Pi(p)$ the two-dimensional ball with center $p$ and radius $s$ in $\Pi$, i.e. $B_s^\Pi(p):=B_s^q(p)\cap\Pi$. When the center is not specified, it is always meant to be the origin.
	
		\item Given a function $\Psi\in W^{1,2}(B_r^2(x))$ and $0<s\le r$, the notation $\restr{\Psi}{\de B_s^2(x)}$ always refers to the trace of $\Psi$ on the circle $\de B_s^2(x)$.
	
		\item Given $K\ge 1$, we define the following set of Beltrami coefficients:
		\begin{align*} \mathcal{E}_K:=\set{\mu\in L^\infty(\C,\C):\norm{\mu}_{L^\infty}\le\frac{K-1}{K+1}}. \end{align*}
		We let $\mathcal{D}_K$ denote the set of $K$-quasiconformal homeomorphisms $\varphi:\C\to\C$ such that
		\begin{align*} \varphi(0)=0,\qquad\min_{x\in\de B_1^2}\abs{\varphi(x)}=1. \end{align*}
		If $\varphi\in\mathcal{D}_K$, we have $\varphi\in W^{1,2}_{loc}(\C)$ and $\de_{\bar z}\varphi=\mu\de_{z}\varphi$ for some $\mu\in\mathcal{E}_K$, in the weak sense; we refer the reader to \cite[Chapter~4]{imayoshi} for the basic theory of $K$-quasiconformal homeomorphisms in the plane. Moreover, it is immediate to check that $\varphi$ is a linear map in $\mathcal{D}_K$ if and only if $\varphi(e_1)=e_1'$ and $\varphi(e_2)=\lambda e_2'$, for suitable orthonormal bases $(e_1,e_2)$, $(e_1',e_2')$ inducing the canonical orientation and a suitable $1\le\lambda\le K$.
		
		\item We define
		\begin{align*}
			D(K):=\sup\set{\abs{\varphi(x)};x\in\bar B_1^2,\varphi\in\mathcal{D}_K},\quad s(K):=\inf\set{\abs{\varphi^{-1}(y)};\abs{y}\ge\frac{1}{2},\varphi\in\mathcal{D}_K},
		\end{align*}
		so that $\varphi(\bar B_1^2)\subseteq\bar B_{D(K)}^2$ and $\varphi(\bar B_{s(K)}^2)\subseteq\bar B_{1/2}^2$ for all $\varphi\in\mathcal{D}_K$. The fact that $D(K)<\infty$ and $s(K)>0$ is guaranteed by Corollary \ref{qchomcpt}. We also set
		\begin{align*}
			\eta(K):=\frac{1}{4}\inf\set{\abs{\varphi(x)};x\in\de B_{s(K)^2}^2,\varphi\in\mathcal{D}_K}>0.
		\end{align*}
	
		\item We let $\mathcal{D}_K^\Pi$ denote the set of maps having the form $\Pi^*\circ R\circ\varphi$, where $\varphi\in\mathcal{D}_K$ and $R:\R^2\to\Pi$ is a linear isometry.	
		Given $0<\delta<1$, we call $\mathcal{R}_{K,\delta}^\Pi$ the set of maps in $W^{1,2}(B_1^2,\R^q)$ which are close to some $\psi\in\mathcal{D}_K^\Pi$ on the circles of radii $1,s(K),s(K)^2$, namely we set
		\begin{align*} \mathcal{R}_{K,\delta}^\Pi:=\set{\Psi\in W^{1,2}(B_1^2,\R^q):\min_{\psi\in\mathcal{D}_K^\Pi}\max_{r\in\set{1,s(K),s(K)^2}}\norm{\restr{\Psi}{\de B_r^2}(r\cdot)-\psi(r\cdot)}_{L^\infty(\de B_1^2)}\le\delta}. \end{align*}
	
		\item Given $\Psi\in C^1(\Omega,\R^q)$, a ball $B_r^2(z)\cptsub\Omega$ and a 2-plane $\Pi$, we define the \emph{projected multiplicity} function
		\begin{align*} N(\Psi,B_r^2(z),\Pi):\Pi\to\N\cup\set{\infty},\qquad N(\Psi,B_r^2(z),\Pi)(p):=\#((\Pi\circ\Psi)^{-1}(p)\cap B_{r}^2(z)) \end{align*}
		and, given $p\in\Pi$ and $t>0$, we also define the \emph{macroscopic multiplicity}
		\begin{align}\label{macromult} n(\Psi,B_r^2(z),B_t^\Pi(p)):=\Big\lfloor\media_{B_t^\Pi(p)}N(\Psi,B_r^2(z),\Pi)+\frac{1}{2}\Big\rfloor\in\N. \end{align}
		The mean appearing in \eqref{macromult} is finite by the area formula and $\floor{\cdot}$ denotes the integer part. Note that, if the mean is close to an integer $k$,
		then the macroscopic multiplicity is precisely $k$.
		Note also that for any $p\in\R^\envdim$ we have
		\begin{align*}
			&n(\Psi,B_r^2(z),B_t^\Pi(\Pi(p)))=n\pa{\frac{\Psi(z+r\cdot)-p}{t},B_1^2,B_1^\Pi}.
		\end{align*}
	\end{itemize}
	
	\section{Background on parametrized stationary varifolds}\label{backsec}
	
	Let $\subman\subset\R^\envdim$ be a (smooth, closed) embedded submanifold.
	Assume we have a smooth conformal immersion $\Phi:B_1^2\to\subman$, critical for the functional
	\begin{align}\label{functional} \Phi\mapsto\int_{B_1^2}\,d\operatorname{vol}_{g_\Phi}+\sigma^2\int_{B_1^2}(1+|A_{g_\Phi}|_{g_\Phi}^2)^2\,d\operatorname{vol}_{g_\Phi} \end{align}
	with respect to smooth variations $(\Phi_t)$ such that $\Phi_t$ is an immersion agreeing with $\Phi$ outside some compact set $F\subset B_1^2$ (independent of $t$).
	Here $g_\Phi:=\Phi^*g_{\R^\envdim}$ and $A_{g_\Phi}$ is the second fundamental form of $\Phi$. Assume that the following \emph{entropy condition}
	\begin{align}\label{all2} \sigma^2\log(\sigma^{-1})\int_{B_1^2}(1+|A|^2)^2\,d\operatorname{vol}_{g_\Phi}\le\epsilon\int_{B_1^2}\,d\operatorname{vol}_{g_\Phi} \end{align}
	holds for some $\epsilon>0$. This condition plays a fundamental role in the viscosity approach presented in \cite{rivminmax},
	and can be enforced ultimately owing to Struwe's monotonicity trick (see \cite[Section~II]{rivminmax} and the references therein).
	Note that
	\begin{align*}
		&g_{\Phi}=\mz|\nabla\Phi|^2\delta,\quad\int_{B_1^2}\,d\operatorname{vol}_{g_\Phi}=\mz\int_{B_1^2}\abs{\nabla\Phi}^2
	\end{align*}
	by conformality of $\Phi$.
	
	Given any $0<\ell<1$ and $p\in\subman$, recall that $\subman_{p,\ell}=\ell^{-1}(\subman-p)$. The rescaled map
	\[ \Psi:B_1^2\to\subman_{p,\ell},\qquad\Psi:=\ell^{-1}(\Phi-p) \]
	is critical for the functional
	\begin{align}\label{rescfunc} \int_{B_1^2}\,d\operatorname{vol}_{g_\Psi}+\tau^2\int_{B_1^2}(\ell^2+|A|^2)^2\,d\operatorname{vol}_{g_\Psi},\qquad\tau:=\sigma\ell^{-2} \end{align}
	and, being $\tau^2\log(\tau^{-1})\le \ell^{-4}\sigma^2\log(\sigma^{-1})$, it satisfies
	\begin{align}\label{cond2} \tau^2\log(\tau^{-1})\int_{B_1^2}(\ell^2+|A|^2)^2\,d\operatorname{vol}_{g_\Psi}\le\epsilon\int_{B_1^2}\,d\operatorname{vol}_{g_\Psi}, \end{align}
	where now $A$ denotes the second fundamental form of $\Psi$ in $\subman_{p,\ell}$ and its norm is meant with respect to the induced metric $g_\Psi$.
	
	In the sequel, we will establish many intermediate results on maps $\Psi$ arising in this way, by means of compactness arguments. The starting point in these arguments is that, heuristically, if we have sequences $\Psi_k$, $p_k$, $\ell_k\to 0$, $\tau_k\to 0$ and $\epsilon_k\to 0$, then by \eqref{rescfunc} and \eqref{cond2} $\Psi_k$ should have a subsequential limit $\Psi_\infty$ (in some weak sense) which is critical for the area functional in the tangent space $T_{p_\infty}\subman$ (where $p_\infty$ is a subsequential limit of the sequence $p_k$), i.e. $\Psi_\infty$ should be a minimal parametrization.
	
	The kind of limiting object that we get is specified by the following definition.
	
	\begin{definition}\label{psv}
		A triple $(\Omega,\Phi,N)$, with $\Omega\subseteq\C$ open, $\Phi\in W^{1,2}(\Omega,\R^\envdim)$ weakly conformal and $N\in L^\infty(\Omega,\N\setminus\set{0})$, is a \emph{local parametrized stationary varifold} if for almost every $\omega\cptsub\Omega$ the rectifiable $2$-varifold
		\[ \vfd_\omega:=(\Phi(\good\cap\omega),\theta_\omega),\qquad\theta_\omega(p):=\sum_{x\in\good\cap\omega\cap\Phi^{-1}(p)}N(x) \]
		is stationary in the open set $\R^q\setminus\Phi(\de\omega)$, where $\mathcal{G}$ denotes the set of Lebesgue points for both $\Phi$ and $d\Phi$. We also require the technical condition
		\begin{equation*}
		\norm{\vfd_\Omega}(B_s^\envdim(p))=\mz\int_{\Phi^{-1}(B_s^\envdim(p))}N\abs{\nabla\Phi}^2\,d\mathcal{L}^2=O(s^2),
		\end{equation*}
		uniformly in $p\in\R^\envdim$.
	\end{definition}
	
	We refer the reader to \cite[Definition~2.1]{pigriv} for the notion of \emph{almost every domain}, as well as to \cite[Definitions~2.2~and~2.9]{pigriv} for another possible definition (in the global and local versions, respectively), whose equivalence with Definition \ref{psv} is detailed in \cite[Remark~2.3]{pigriv}. The latter formulation will not be used here.
	
	As already mentioned in the introduction, the main result of \cite{pigriv} is that $\Phi$ is harmonic (namely, it coincides a.e. with a harmonic map) and $N$ is (a.e.) constant on those connected components of $\Omega$ where $\Phi$ is not itself (a.e.) constant.
	
	Since these facts are crucially used in many intermediate steps towards the proof of Theorem \ref{main}, we give a precise statement that summarizes all the information we need to extract from the works \cite{rivminmax} and \cite{pigriv}.
	
	\begin{thm}[limiting behaviour of almost flat critical immersions]\label{blackbox}
		Assume that $\Psi_k\in C^2(\bar B_R^2,\subman_{p_k,\ell_k})$ is a sequence of conformal immersions such that $\Psi_k$ is critical for the functional \eqref{rescfunc} on the interior $B_R^2$ (with $\tau_k,\ell_k$ in place of $\tau,\ell$) and
		\begin{itemize}
		\item $\displaystyle\Psi_k|_{\de B_R^2}\to \gamma_\infty$ uniformly, for some $\gamma_\infty:\de B_R^2\to \R^\envdim$,
		\item $\displaystyle\mz\int_{B_R^2}\abs{\nabla\Psi_k}^2\le E$,
		\item $\displaystyle\tau_k^2\log(\tau_k^{-1})\int_{B_R^2}\abs{A}^4\,d\operatorname{vol}_{g_{\Psi_k}}\to 0$,
		\item $\displaystyle\ell_k,\tau_k\to 0$.
		\end{itemize}
		Then, up to subsequences, $\Psi_k\weakto\Psi_\infty$ in $W^{1,2}(B_R^2,\R^q)$, for some $\Psi_\infty$ which is continuous (in the interior), has trace $\gamma_\infty$, and satisfies the convex hull property, namely
		\begin{align*}
			&\Psi_\infty(\bar\omega)\subseteq\operatorname{co}(\Psi_\infty(\de\omega))\qquad\text{for all }\omega\cptsub B_R^2.
		\end{align*}
		The image measures $(\Psi_k)_*\pa{\mz|\nabla\Psi_k|^2}$ in $\R^\envdim$ form a tight sequence.
		
		Given $\omega\cptsub B_R^2$ with $\Psi_\infty(\bar\omega)\subseteq\R^\envdim\setminus\gamma_\infty(\de B_R^2)$, there exists a quasiconformal homeomorphism $\varphi_\infty$ of $\R^2$ and a locally constant multiplicity $N_\infty\in L^\infty(\omega,\Z^+)$ such that the $2$-varifolds induced by $\restr{\Psi_k}{\omega}$ subsequentially converge on $\R^\envdim\setminus\Phi_\infty(\de\omega)$ to a (local) parametrized stationary varifold
		\begin{align*} (\varphi_\infty(\omega),\Psi_\infty\circ\varphi_\infty^{-1},N_\infty\circ\varphi_\infty^{-1}). \end{align*}
		in the varifold sense, namely in duality with $C_c^0((\R^\envdim\setminus\Phi_\infty(\de\omega))\times\operatorname{Gr}(2,\R^\envdim))$.
		Also, on $\omega$ we have the convergence of Radon measures
		\begin{align}\label{blackboxmeas}
		&\mz\abs{\nabla\Psi_k}^2\mathcal{L}^2\weakstarto N_\infty\abs{\de_1\Psi_\infty\wedge\de_2\Psi_\infty}\mathcal{L}^2.
		\end{align}
		Setting $\Gamma_\infty:=\gamma_\infty(\de B_R^2)$, we have $N_\infty\le\frac{E}{\pi\operatorname{dist}(\Psi_\infty(\bar\omega),\Gamma_\infty)^2}$ a.e. and the distortion constant of $\varphi_\infty$ is bounded by $\pa{\frac{E}{\pi\operatorname{dist}(\Psi_\infty(\bar\omega),\Gamma_\infty)^2}}^2$.
	\end{thm}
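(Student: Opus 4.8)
The plan is to assemble the analytic results of \cite{rivminmax} and \cite{pigriv}, adding only elementary arguments: weak $W^{1,2}$-compactness, a confinement/tightness observation, the monotonicity estimate for $N_\infty$, and a linear-algebra bound for the distortion of $\varphi_\infty$. Almost all of the substance lies in those two papers; the task is to invoke them under the present hypotheses—in particular to note that the decay hypothesis $\tau_k^2\log(\tau_k^{-1})\int_{B_R^2}|A|^4\,d\operatorname{vol}_{g_{\Psi_k}}\to 0$ is a (strong) form of the entropy condition \eqref{cond2}, which also forces the penalization term, and its first variation, to vanish in the limit.

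\textbf{Weak compactness and $\epsilon$-regularity.} By conformality $\mz\int_{B_R^2}|\nabla\Psi_k|^2\le E$, and since $\Psi_k|_{\de B_R^2}\to\gamma_\infty$ uniformly the traces are bounded; so $\|\Psi_k\|_{W^{1,2}(B_R^2)}$ is bounded and, up to a subsequence (extracting further so that $p_k\to p_\infty\in\subman$), $\Psi_k\weakto\Psi_\infty$. The $\epsilon$-regularity and bubble--neck analysis for critical points of \eqref{rescfunc} from \cite{rivminmax}, which rests on the entropy bound, produces a finite bad set $\{a_1,\dots,a_m\}\subset B_R^2$ off which the convergence $\Psi_k\to\Psi_\infty$ is strong in $W^{1,2}_{loc}$ and locally uniform, shows $\Psi_\infty$ is continuous on all of $B_R^2$ with trace $\gamma_\infty$, and---through the convex-hull-type estimates there---confines the images $\Psi_k(B_R^2)$ to a fixed bounded subset of $\R^\envdim$; this yields the tightness of $(\Psi_k)_*\pa{\mz|\nabla\Psi_k|^2}$ and, since $\ell_k\to 0$, forces $\Psi_\infty$ to take values in the linear space $T_{p_\infty}\subman$.

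\textbf{The limit varifold.} Now fix $\omega\cptsub B_R^2$ with $\Psi_\infty(\bar\omega)\cap\Gamma_\infty=\emptyset$; perturbing $\omega$ slightly we may assume $\de\omega$ avoids the bad set, so $\Psi_k\to\Psi_\infty$ uniformly on $\de\omega$. Taking $X\in C_c^1(\R^\envdim\setminus\Gamma_\infty,\R^\envdim)$ and the admissible variation $\Psi_{k,t}:=\Psi_k+tX\circ\Psi_k$ (supported away from $\de B_R^2$ for $k$ large), criticality of $\Psi_k$ gives that the first variation of area of $\Psi_k|_{B_R^2}$ along $X$ equals $-\tau_k^2$ times the first variation of the penalization, which tends to $0$ by \eqref{cond2} and the decay hypothesis (the relevant estimate is in \cite{rivminmax}); hence the weak-$*$ limit $V_\infty$ of the varifolds carried by $\Psi_k|_{B_R^2}$---of mass $\le E$---is stationary in $\R^\envdim\setminus\Gamma_\infty$. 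By the compactness and no-neck analysis of \cite{rivminmax,pigriv}, along a further subsequence the varifolds $V_k$ carried by $\Psi_k|_\omega$ converge on $\R^\envdim\setminus\Psi_\infty(\de\omega)$ to the integer rectifiable varifold carried by $\Psi_\infty\circ\varphi_\infty^{-1}$ with multiplicity $N_\infty\circ\varphi_\infty^{-1}$, where $\varphi_\infty$ is the quasiconformal homeomorphism of $\R^2$ uniformizing---via the measurable Riemann mapping theorem---the bounded measurable conformal structure induced by $\Psi_\infty$; the $O(s^2)$ condition in Definition \ref{psv} being just the monotonicity inequality, the triple $(\varphi_\infty(\omega),\Psi_\infty\circ\varphi_\infty^{-1},N_\infty\circ\varphi_\infty^{-1})$ is a local parametrized stationary varifold, and \eqref{blackboxmeas} is the energy-identity part of this convergence. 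By the main theorem of \cite{pigriv}, $N_\infty$ is then locally constant; and---this being a local property, established throughout $B_R^2$ by the analysis of \cite{rivminmax,pigriv}---$\Psi_\infty$ becomes, after reparametrization by $\varphi_\infty$, a harmonic map into the linear space $T_{p_\infty}\subman$, hence componentwise harmonic. By the maximum principle, and since $\varphi_\infty$ is a homeomorphism, this yields the convex hull property $\Psi_\infty(\bar\omega)\subseteq\co{\Psi_\infty(\de\omega)}$ for every $\omega\cptsub B_R^2$.

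\textbf{Quantitative bounds and main obstacle.} Set $d:=\dist(\Psi_\infty(\bar\omega),\Gamma_\infty)>0$. Since the energy density of $\Psi_k|_\omega$ is bounded by that of $\Psi_k|_{B_R^2}$, passing to the limit gives $\|V\|\le\|V_\infty\|$, where $V$ is the limit varifold over $\omega$; hence at $\mathcal L^2$-a.e. $x\in\omega$, with $p:=\Psi_\infty(x)\in\supp{\|V\|}\subseteq\supp{\|V_\infty\|}$ and $\dist(p,\Gamma_\infty)\ge d$, adding the multiplicities of $V$ over $p$ gives $N_\infty(x)\le\Theta(V,p)\le\Theta(V_\infty,p)$, while the monotonicity formula for the stationary $V_\infty$ yields $\Theta(V_\infty,p)\le\frac{\|V_\infty\|(B_d^\envdim(p))}{\pi d^2}\le\frac{E}{\pi d^2}$. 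For the distortion, comparing on small balls and differentiating, \eqref{blackboxmeas} with the weak lower semicontinuity of the Dirichlet energy gives $\mz|\nabla\Psi_\infty|^2\le N_\infty\abs{\de_1\Psi_\infty\wedge\de_2\Psi_\infty}$ a.e.; in terms of the singular values $\lambda_1\ge\lambda_2\ge 0$ of $d\Psi_\infty$ this reads $\lambda_1^2+\lambda_2^2\le 2N_\infty\lambda_1\lambda_2$, so $\lambda_1/\lambda_2\le N_\infty+\sqrt{N_\infty^2-1}\le N_\infty^2$ where $\lambda_2>0$ (and the induced structure is standard, of distortion $1$, where $\lambda_2=0$), whence the distortion of $\varphi_\infty$ is at most $N_\infty^2\le\bpa{\frac{E}{\pi d^2}}^2$. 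The main obstacle is the third paragraph: extracting from \cite{rivminmax,pigriv} the precise parametrized-stationary-varifold structure of the limit---above all the no-neck energy identity \eqref{blackboxmeas} and the boundedness of the limiting conformal structure, without which $\varphi_\infty$ would not exist---which is exactly where the full strength of the viscosity method and of the regularity theory for parametrized stationary varifolds enters.
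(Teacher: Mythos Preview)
Your overall architecture matches the paper's, but there is one genuine error and one step where your argument diverges from the paper in a way that leaves a gap.

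The error is your assertion that the $\epsilon$-regularity/bubble--neck analysis of \cite{rivminmax} produces a finite bad set off which $\Psi_k\to\Psi_\infty$ \emph{strongly in $W^{1,2}_{loc}$}. That is false in this setting and in fact circular: strong $W^{1,2}$-convergence on a subdomain would force the limit energy measure there to equal $\mz|\nabla\Psi_\infty|^2\,\mathcal L^2$, whereas \eqref{blackboxmeas} identifies it as $N_\infty|\de_1\Psi_\infty\wedge\de_2\Psi_\infty|\,\mathcal L^2$; equality of the two is precisely $N_\infty\equiv 1$ together with conformality of $\Psi_\infty$, which is the main theorem of the paper (cf. Corollary~\ref{strongconv}, stated as a \emph{consequence} of Theorem~\ref{main}, not as an input). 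What one actually gets from \cite{rivminmax} is weak $W^{1,2}$-convergence, continuity of $\Psi_\infty$ and the convex hull property, together with the measure convergence \eqref{blackboxmeas} allowing $N_\infty>1$. (Relatedly, the paper shows the bubbling set is in fact \emph{empty} here, not merely finite, by a monotonicity argument: a bubble would produce a nontrivial stationary varifold in all of $\R^q$ with finite total mass, contradicting the monotonicity formula.)

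The second point is tightness. You obtain it from a claim that convex-hull-type estimates in \cite{rivminmax} confine the images $\Psi_k(B_R^2)$ to a fixed bounded set. No such uniform confinement for the \emph{approximating} maps is established there; the convex hull property is a statement about $\Psi_\infty$. The paper instead argues by contradiction: if $(\Psi_k)_*\bpa{\mz|\nabla\Psi_k|^2}$ were not tight, one could translate by points $q_k\to\infty$ and pass to a limit to get a nontrivial varifold $\vfd_\infty'$ stationary in all of $\R^q$ (since $\Psi_k(\de B_R^2)-q_k$ eventually misses any fixed ball) with $\|\vfd_\infty'\|(\R^q)\le E$, contradicting the monotonicity formula. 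This is the step you should use in place of the image-confinement claim. Your singular-value computation for the distortion bound $\lambda_1/\lambda_2\le N_\infty+\sqrt{N_\infty^2-1}\le N_\infty^2$ is correct and gives a nice explicit version of what the paper imports from \cite[Section~4]{pigriv}.
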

	
	\begin{proof}[Proof of Theorem \ref{blackbox}]
		The proof is essentially already contained in \cite{rivminmax} and \cite{pigriv}, so we just present the required adaptations.
		
		Up to subsequences, we can assume that $\Psi_k$ has a weak limit $\Psi_\infty$ in $W^{1,2}(B_R^2,\R^q)$, with trace $\gamma_\infty$, and that the varifolds $\vfd_k$ induced by $\Psi_k$ converge to a varifold $\vfd_\infty$ in $\R^q$.
		
		The arguments used in \cite[Section~III]{rivminmax} and in \cite[Section~2]{pigriv} show that $\Psi_\infty$ has a continuous representative (on the interior $B_R^2$), satisfying the convex hull property.
		Also, from \cite[Section~III]{rivminmax} we have that
		\begin{align}\label{stat} \vfd_\infty\text{ is stationary in }U:=\R^\envdim\setminus\gamma_\infty(\de B_R^2) \end{align}
		and is an integer rectifiable varifold.
		We claim that the measures $\|\vfd_k\|=(\Psi_k)_*\pa{\mz|\nabla\Psi_k|^2}$ on $\R^\envdim$ form a tight sequence.
		If this were not true, up to subsequences we could find points $q_k\in\R^\envdim$ with $|q_k|\to\infty$ and such that
		the argument of \cite[Lemma~III.3]{rivminmax} applies (with $q_k$ in place of $q$), on the region $\R^\envdim\setminus\Psi_k(\de B_R^2)$.
		Hence,
		\begin{align*}
			&\liminf_{k\to\infty}\|\vfd_{k}\|(B_1^\envdim(q_k))>0.
		\end{align*}
		So the varifolds $\vfd_k-q_k$ converge subsequentially to a nontrivial varifold $\vfd_\infty'$ in $\R^q$, stationary on $U':=\R^\envdim$: indeed, the proof of \eqref{stat} can be repeated with $\Psi_k-q_k$ in place of $\Psi_k$ (and $U'$ in place of $U$), using the fact that, for all $s>0$, the image of $\Psi_k|_{\de B_R^2}-q_k$ is eventually disjoint from $B_s^\envdim$. Its total mass $\|\vfd_\infty'\|(\R^\envdim)$ must be bounded by $\liminf_{k\to\infty}\|\vfd_k\|(\R^\envdim)\le E$; however, the monotonicity formula implies that $\|\vfd_\infty'\|(\R^\envdim)=\infty$, a contradiction.
		
		We also claim that the bubbling set is empty in our setting. Indeed, by tightness of the measures $\|\vfd_k\|$,
		a bubbling point would produce, in the limit, a nontrivial stationary varifold in $\R^q$. Again, its mass would be bounded by $E$, contradicting the monotonicity formula.
		
		Now \cite[Lemma~III.5]{rivminmax} gives, up to further subsequences, the limit
		\begin{align*}
			&\nu_k:=\mz|\nabla\Psi_k|^2\weakstarto\nu_\infty,\quad\text{with }\nu_\infty=m\,\mathcal{L}^2
		\end{align*}
		in the sense of Radon measures (i.e. in duality with $C^0_c(B_R^2)$). The function $m(z)\ge 0$ equals $N_\infty(z)|\de_1\Phi_\infty\wedge\de_2\Phi_\infty|(z)$ a.e., for a positive integer $N_\infty(z)$ which is bounded by the density of $\vfd_\infty$ at $\Psi_\infty(z)$ whenever $\Psi_\infty(z)\in U$.
		
		Let $\omega\cptsub B_R^2$ be such that $\Psi_\infty(\bar\omega)\subseteq\R^\envdim\setminus\Gamma_\infty$.
		Defining $s:=\operatorname{dist}(\Psi_\infty(\bar\omega),\Gamma_\infty)>0$, note that
		\begin{align*}
			&B_s^\envdim(q)\subseteq U\qquad\text{for all }q\in\Psi_\infty(\bar\omega).
		\end{align*}
		Hence, by the monotonicity formula, the density of $\vfd_\infty$ at such points $q$ is bounded by
		$\frac{E}{\pi s^2}$. This gives the upper bound for $N_\infty$. As explained in detail in \cite[Section~4]{pigriv},
		there exists a quasiconformal homeomorphism $\varphi_\infty$ of the plane, with distortion constant bounded by the square of the (essential) supremum of $N_\infty|_\omega$, such that $\Psi_\infty\circ\varphi_\infty^{-1}$ is weakly conformal on $\varphi_\infty(\omega)$. Finally, it is the main outcome of \cite{rivminmax} that the $2$-varifolds induced by $\Phi_k|_\omega$ converge to the (local) parametrized stationary varifold $(\varphi_\infty(\omega),\Psi_\infty\circ\varphi_\infty^{-1},N_\infty\circ\varphi_\infty^{-1})$
		(whose mass measure is bounded above by $\|\vfd_\infty\|$), in the complement of $\Psi_\infty(\de\omega)$.\footnote{The convergence actually holds on all of $\R^\envdim$ (or, more precisely, on $\R^\envdim\times\operatorname{Gr}(2,\R^\envdim)$) if $\nu_\infty(\de\omega)=0$.}
	\end{proof}

	\begin{thm}[regularity of parametrized stationary varifolds]\label{regthm}
		In the situation of Theorem \ref{blackbox}, $\Psi_\infty\circ\varphi_\infty^{-1}:\varphi_\infty(\omega)\to\R^\envdim$ is harmonic. Also, if $\omega$ is connected and $\Psi_\infty|_\omega$ is not constant, $N_\infty$ equals a constant integer (a.e.) on $\omega$ and $\Psi_\infty\circ\varphi_\infty^{-1}$ is a minimal branched immersion.
	\end{thm}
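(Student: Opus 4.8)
The plan is to deduce Theorem \ref{regthm} from the main regularity result of \cite{pigriv}, once the limiting object furnished by Theorem \ref{blackbox} is recognized as a (local) parametrized stationary varifold in the sense of Definition \ref{psv}. So the first step is to return to the conclusion of Theorem \ref{blackbox} and check, point by point, that the triple $(\varphi_\infty(\omega),\Psi_\infty\circ\varphi_\infty^{-1},N_\infty\circ\varphi_\infty^{-1})$ satisfies Definition \ref{psv}: openness of $\varphi_\infty(\omega)$ (immediate, since $\varphi_\infty$ is a homeomorphism), membership of $\Psi_\infty\circ\varphi_\infty^{-1}$ in $W^{1,2}$ and its weak conformality on $\varphi_\infty(\omega)$, the fact that $N_\infty\circ\varphi_\infty^{-1}$ is bounded and $\N\setminus\set{0}$-valued, and stationarity of the induced rectifiable $2$-varifold away from $\Psi_\infty(\de\omega)$ — all of which are asserted in, or established during the proof of, Theorem \ref{blackbox}.

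The only condition requiring a word is the technical uniform bound $\|\vfd\|(B_s^\envdim(p))=O(s^2)$. By \eqref{blackboxmeas} and the area formula, the mass measure of this varifold is the pushforward by $\Psi_\infty$ of $N_\infty\abs{\de_1\Psi_\infty\wedge\de_2\Psi_\infty}\,\mathcal{L}^2\mrestr\omega$; hence it has total mass at most $E$, is supported in the compact set $\Psi_\infty(\bar\omega)$, and — as noted in the proof of Theorem \ref{blackbox} — is dominated by the mass of the stationary integral varifold $\vfd_\infty$ on $U=\R^\envdim\setminus\Gamma_\infty$. Since $\Psi_\infty(\bar\omega)$ lies at a definite positive distance from $\Gamma_\infty$, the monotonicity formula for $\vfd_\infty$ gives a uniform density bound near $\Psi_\infty(\bar\omega)$, while the crude bound $\|\vfd\|\le E$ handles balls of radius bounded below; together these yield the desired $O(s^2)$ estimate uniformly in $p$.

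Granting this, the main theorem of \cite{pigriv} immediately gives that $\Psi_\infty\circ\varphi_\infty^{-1}$ agrees a.e.\ with a harmonic map on $\varphi_\infty(\omega)$ — the first assertion — and that $N_\infty\circ\varphi_\infty^{-1}$ is a.e.\ a constant positive integer on every connected component of $\varphi_\infty(\omega)$ on which $\Psi_\infty\circ\varphi_\infty^{-1}$ is non-constant. I would then transfer this back through $\varphi_\infty$: being a homeomorphism, it carries connectedness of $\omega$ to connectedness of $\varphi_\infty(\omega)$ and non-constancy of $\Psi_\infty|_\omega$ to non-constancy of $\Psi_\infty\circ\varphi_\infty^{-1}$, and (being quasiconformal, together with its inverse) it maps Lebesgue-null sets to Lebesgue-null sets, so that $N_\infty$ is a.e.\ a constant integer on $\omega$. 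Finally, $u:=\Psi_\infty\circ\varphi_\infty^{-1}$ is harmonic, weakly conformal and non-constant on the connected open set $\varphi_\infty(\omega)$; then $\de_z u$ is a holomorphic $\C^\envdim$-valued map, not identically zero, with $\sum_j(\de_z u^j)^2\equiv 0$, so its zeros form a discrete set and have finite order, $u$ is a conformal — hence minimal — immersion off this set and has a branch point of the corresponding order at each zero, i.e.\ $u$ is a branched minimal immersion (the classical local analysis of weakly conformal harmonic maps). I do not expect a genuine obstacle here: the analytic substance lies entirely in Theorem \ref{blackbox} and \cite{pigriv}, and the only points to watch are the verbatim verification of Definition \ref{psv} (chiefly the $O(s^2)$ bound above) and the degenerate case $\Psi_\infty|_\omega$ constant, where $N_\infty$ is genuinely undetermined and must be excluded from the second half of the statement.
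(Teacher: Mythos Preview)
Your proposal is correct and follows the same approach as the paper, which simply records that this is a special case of \cite[Theorem~5.7]{pigriv}. Your extra verification of Definition~\ref{psv} is not needed, since Theorem~\ref{blackbox} already asserts that $(\varphi_\infty(\omega),\Psi_\infty\circ\varphi_\infty^{-1},N_\infty\circ\varphi_\infty^{-1})$ is a local parametrized stationary varifold; but the additional details you supply (the $O(s^2)$ bound via monotonicity, the transfer through $\varphi_\infty$, and the classical local analysis of weakly conformal harmonic maps) are accurate and harmless.
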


	\begin{proof}[Proof of Theorem \ref{regthm}]
		This is a special case of the main theorem in \cite{pigriv}, namely \cite[Theorem~5.7]{pigriv}.
	\end{proof}
	
	\section{Two lemmas on harmonic maps}\label{harmsec}

	\begin{lemmaen}[uniform convergence for Dirichlet problem with variable domain]\label{barrier}
		Let $\gamma_k\in C^0(\de B_1^2,\R^2)$ be a sequence of Jordan curves converging (in $C^0$) to a Jordan curve $\gamma_\infty$ and let $f_k\in C^0(\de B_1^2)$ be a sequence converging uniformly to a function $f_\infty$.
		Let $D_k$ be the domain bounded by $\gamma_k$, let $u_k\in C^0(\bar D_k)$ be the harmonic extension of $f_k\circ\gamma_k^{-1}$, and similarly define $D_\infty$ and $u_\infty$. Then $u_k\to u_\infty$ in $C^0_{loc}(D_\infty)$. Moreover, if $y_k\to y_\infty$ with $y_k\in\bar D_k$ and $y_\infty\in\bar D_\infty$, then $u_k(y_k)\to u_\infty(y_\infty)$.
	\end{lemmaen}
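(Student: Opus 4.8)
The plan is to combine an interior compactness argument with a boundary barrier that is \emph{uniform in $k$}. First, record the a priori bound $\norm{u_k}_{L^\infty(\obar{D}_k)}\le\norm{f_k}_{L^\infty}$, which is uniformly bounded since $f_k\to f_\infty$ uniformly; set $M:=\sup_k\norm{f_k}_{L^\infty}$. Next, since the $\gamma_k$ are Jordan curves converging uniformly to $\gamma_\infty$, the winding number of $\gamma_k$ about any fixed point off $\gamma_\infty(\de B_1^2)$ stabilizes to that of $\gamma_\infty$ (straight-line homotopy); hence $x\in D_k$ for $k$ large whenever $x\in D_\infty$, and in fact every compact $K\cptsub D_\infty$ satisfies $K\subseteq D_k$ for $k$ large with $\dist(K,\de D_k)\ge\tfrac12\dist(K,\de D_\infty)>0$. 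As the $u_k$ are harmonic and uniformly bounded on such neighbourhoods, interior estimates yield a subsequence with $u_k\to u$ in $C^\infty_{loc}(D_\infty)$, $u$ harmonic. Since $D_\infty$ is a Jordan domain it is regular for the Dirichlet problem, so $u=u_\infty$ (and then the full sequence converges, the limit being subsequence-independent) will follow once we show $u(y)\to(f_\infty\circ\gamma_\infty^{-1})(\xi)$ as $D_\infty\ni y\to\xi$ for every $\xi\in\de D_\infty$.

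For this boundary statement, fix $\xi=\gamma_\infty(\theta)\in\de D_\infty$, write $c:=f_\infty(\theta)$ and $\xi_k:=\gamma_k(\theta)\to\xi$, and let $\epsilon>0$. A routine bookkeeping with the moduli of continuity of $f_\infty$ and $\gamma_\infty$, combined with the uniform convergences $f_k\to f_\infty$ and $\gamma_k\to\gamma_\infty$, produces $r>0$ and $k_0$ such that for all $k\ge k_0$: $\abs{u_k-c}\le\epsilon$ on $\de D_k\cap\obar{B}_{2r}^2(\xi_k)$, $\abs{\xi_k-\xi}\le r$, and $\diam(\de D_k)>4r$. Now build the barrier: since $\de D_k=\gamma_k(\de B_1^2)$ is a continuum through $\xi_k$ of diameter $>4r$, the connected component $K_k$ of $\xi_k$ in $\de D_k\cap\obar{B}_{2r}^2(\xi_k)$ is a continuum joining $\xi_k$ to $\de B_{2r}^2(\xi_k)$ (the preimage arc argument). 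Let $w_k$ be the harmonic measure in $D_k$ of the ``far'' part $\de D_k\setminus\obar{B}_{2r}^2(\xi_k)$, so $w_k$ is harmonic on $D_k$ with $0\le w_k\le1$, $w_k\equiv1$ on the far part and $w_k\equiv0$ on $\de D_k\cap\obar{B}_{2r}^2(\xi_k)$. Comparing on $D_k\cap B_{2r}^2(\xi_k)\subseteq B_{2r}^2(\xi_k)\setminus K_k$ and invoking Beurling's projection theorem (the extremal case being the explicit slit disk $B_1^2\setminus[0,1)$) gives the \emph{$k$-independent} bound $w_k(y)\le C\sqrt{\abs{y-\xi_k}/(2r)}=:\Lambda(\abs{y-\xi_k})$ for $y\in D_k\cap B_{2r}^2(\xi_k)$, with $C$ absolute and $\Lambda(s)\to0$ as $s\to0$.

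Finally, for $k\ge k_0$ the function $\epsilon+2M\,w_k$ dominates $\abs{u_k-c}$ on $\de D_k$, so by the maximum principle $\abs{u_k(y)-c}\le\epsilon+2M\,\Lambda(\abs{y-\xi_k})$ on $\obar{D}_k$. Fixing $y\in D_\infty$ and letting $k\to\infty$ (so $y\in D_k$, $\xi_k\to\xi$, $u_k(y)\to u(y)$) yields $\abs{u(y)-c}\le\epsilon+2M\,\Lambda(\abs{y-\xi})$; since $\epsilon$ is arbitrary, $u(y)\to c$ as $y\to\xi$, which identifies $u=u_\infty$ and gives $u_k\to u_\infty$ in $C^\infty_{loc}(D_\infty)$. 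The ``moreover'' part is immediate from the same inequality: if $y_k\to y_\infty\in D_\infty$ then $y_k$ lies in a fixed compact neighbourhood of $y_\infty$ contained in all $D_k$ ($k$ large) on which $u_k\to u_\infty$ uniformly, so $u_k(y_k)\to u_\infty(y_\infty)$; if $y_k\to y_\infty=\xi\in\de D_\infty$, the displayed bound at $y=y_k$ forces $u_k(y_k)\to c=u_\infty(\xi)$ because $\abs{y_k-\xi_k}\to0$. The main obstacle is the second step: producing a barrier with decay rate at the boundary uniform in $k$. This is exactly where one uses that the boundaries $\de D_k$ are uniformly non-thin at $\xi$ (each contains a continuum of definite size through $\xi_k$), packaged through the Beurling/Wiener estimate. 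A cleaner but less self-contained alternative is to transport everything to the disk via Rad\'o's theorem on the uniform convergence of conformal maps for uniformly convergent Jordan curves, where the barrier is explicit and the uniform convergence of the Poisson integrals of the (uniformly convergent) transferred boundary data is immediate.
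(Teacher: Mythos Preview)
Your proof is correct and takes a genuinely different route from the paper's. The paper proceeds by conformal uniformization: it picks Riemann maps $\upsilon_k:\bar B_1^2\to\bar D_k$ (Carath\'eodory extension), normalizes them by fixing an interior point, and then proves directly that $\upsilon_k\to\upsilon_\infty$ in $C^0(\bar B_1^2)$ --- essentially establishing a Rad\'o-type convergence theorem from scratch, using Hurwitz's theorem to get injectivity of the limit and Helly's selection principle to control the boundary reparametrizations $\beta_k$. Once this is done, $u_k=U_k\circ\upsilon_k^{-1}$ with $U_k$ the Poisson extensions on the fixed disk of uniformly convergent data, and the conclusion is immediate. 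This is exactly the alternative you sketch in your last sentence.

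Your main argument instead stays on the variable domains and is potential-theoretic: you manufacture a $k$-uniform boundary modulus of continuity via harmonic measure and Beurling's projection theorem, exploiting that each $\de D_k$ carries a continuum of definite diameter through the approximating boundary point $\xi_k$. This avoids Carath\'eodory, Helly, and the delicate identification of the limiting Riemann map, at the cost of invoking Beurling as a black box; it also generalizes more readily to settings without conformal uniformization. One small imprecision: the bound $w_k(y)\le\Lambda(\abs{y-\xi_k})$ is only established for $y\in D_k\cap B_{2r}^2(\xi_k)$, not on all of $\bar D_k$ as written, but this is harmless since you only use it for $y$ (or $y_k$) converging to $\xi$, hence eventually in $B_r^2(\xi)\subset B_{2r}^2(\xi_k)$.
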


	Note that such harmonic extensions exist and are unique, since by Carath\'eodory's theorem there exist homeomorphisms $\bar B_1^2\to\bar D_k$ restricting to biholomorphisms $B_1^2\to D_k$ (and similarly for $D_\infty$),
	allowing to reduce matters to the well-known existence and uniqueness of the harmonic extension on the unit disk.

	\begin{proof}[Proof of Lemma \ref{barrier}]
		Since the functions $f_k$ are equibounded, from the maximum principle and interior estimates it follows that the functions $u_k$ are equibounded in $C^2(\bar\omega)$, for any $\omega\cptsub D_\infty$, and hence by Ascoli--Arzel\`a theorem the convergence $u_k\to u_\infty$ in $C^0_{loc}(D_\infty)$ follows from the second claim.
		
		It suffices to show that the second claim holds for a subsequence: once this is done, it can be obtained for the full sequence by a standard contradiction argument (given a sequence $y_k\to y_\infty$, if $u_k(y_k)$ does not converge to $u_\infty(y_\infty)$, we can find a subsequence such that it converges to a different value; then we reach a contradiction along a further subsequence where the second claim holds).
		
		Up to removing a finite set of indices, we can suppose that there is a point $p$ such that $p\in D_k$ for all $k\in\N\cup\set{\infty}$. By Carath\'eodory's theorem, we can find homeomorphisms $\upsilon_k:\bar B_1^2\to\bar{D}_k$ restricting to biholomorphisms from $B_1^2$ to $D_k$, so that $\restr{\upsilon_k}{\de B_1^2}=\gamma_k\circ\beta_k$, for suitable homeomorphisms $\beta_k:\de B_1^2\to\de B_1^2$ (for all $k\in\N$), and $\upsilon_k(0)=p$.
		
		Since the maps $\upsilon_k$ and $\upsilon_k^{-1}$ are equibounded and harmonic, we can assume that
		\begin{align} \upsilon_k\to\upsilon_\infty,\qquad\zeta_k:=\upsilon_k^{-1}\to\zeta_\infty \end{align}
		in $C^\infty_{loc}(B_1^2)$ and $C^\infty_{loc}(D_\infty)$, respectively. Note that $\upsilon_\infty$ is a holomorphic map taking values into $\bar D_\infty$, while $\zeta_\infty$ is holomorphic and takes values into $B_1^2$ (by the maximum principle, since $\zeta_\infty(p)=0$ and $\abs{\zeta_\infty}\le 1$). So for any $w\in D_\infty$ the set $\set{\zeta_k(w)\mid k\in\N}\cup\set{\zeta_\infty(w)}\subset B_1^2$ is compact and we infer
		\begin{align} \upsilon_\infty\circ\zeta_\infty(w)=\lim_{k\to\infty}\upsilon_k\circ\zeta_k(w)=w. \end{align}
		Hence $\upsilon_\infty$ is surjective and thus an open map. So $\upsilon_\infty(B_1^2)=D_\infty$ and, by \cite[Theorem~10.43]{rudin} (applied with $f:=\upsilon_\infty-w$, $g:=\upsilon_k-w$, for a fixed $w\in D_\infty$ and an arbitrary circle $\de B_r^2\subseteq B_1^2$ avoiding $\upsilon_\infty^{-1}(w)$, with $k$ large enough), it is also injective.		
		By Carath\'eodory's theorem, it extends continuously to a homeomorphism (still denoted $\upsilon_\infty$) from $\bar B_1^2$ to $\bar D_\infty$ and we have $\restr{\upsilon_\infty}{\de B_1^2}=\gamma_\infty\circ\beta_\infty$ for a suitable homeomorphism $\beta_\infty:\de B_1^2\to\de B_1^2$.
		
		Up to subsequences, applying Helly's selection principle to suitable lifts $\bar\beta_k:\R\to\R$, we can assume that $\beta_k\to\tilde\beta_\infty$ everywhere, for some order-preserving $\tilde\beta_\infty$.\footnote{The map $\tilde\beta_\infty$ could also be order-reversing: this happens precisely if $\beta_k$ reverses the orientation along the subsequence. For simplicity, we assume $\beta_k$, $\tilde\beta_\infty$ to be order-preserving (the other case being analogous).}
		On the other hand, since $\sup_k\int_{B_1^2}\abs{\upsilon_k'}^2=\sup_k\mathcal{L}^2(D_k)$ is finite, we have weak convergence $\upsilon_k\weakto\upsilon_\infty$ in $W^{1,2}(B_1^2)$ and thus weak convergence $\gamma_k\circ\beta_k\weakto\gamma_\infty\circ\beta_\infty$ in $L^2(\de B_1^2)$. The everywhere convergence $\gamma_k\circ\beta_k\to\gamma_\infty\circ\tilde\beta_\infty$ implies $\gamma_\infty\circ\beta_\infty=\gamma_\infty\circ\tilde\beta_\infty$ a.e. and thus $\beta_\infty=\tilde\beta_\infty$ a.e. In particular, $\beta_\infty$ is also order-preserving. Since $\beta_\infty$ is continuous and both maps are order-preserving, we conclude that $\beta_\infty=\tilde\beta_\infty$ everywhere. Using again the continuity of $\beta_\infty$, as well as the everywhere convergence of the order-preserving maps $\beta_k\to\beta_\infty$, we also get that $\beta_k\to\beta_\infty$ uniformly.
		
		Being $\upsilon_k$ the harmonic extension of $\gamma_k\circ\beta_k$ (for $k\in\N\cup\set{\infty}$), we conclude that $\upsilon_k\to\upsilon_\infty$ in $C^0(\bar B_1^2)$. Let $U_k\in C^0(\bar B_1^2)$ be the harmonic extension of $f_k\circ\beta_k$ and note that $U_k\to U_\infty$ in $C^0(\bar B_1^2)$. By conformal invariance, $u_k:=U_k\circ\upsilon_k^{-1}$ is the harmonic extension of $f_k\circ\gamma_k^{-1}$ on $D_k$ (for $k\in\N\cup\set{\infty}$).
		
		Finally, we claim that in the situation of the second claim we have $\upsilon_k^{-1}(y_k)\to\upsilon_\infty^{-1}(y_\infty)$. This easily follows from the injectivity of $\upsilon_\infty$: if we had $\abs{\upsilon_{k}^{-1}(y_{k})-\upsilon_\infty^{-1}(y_\infty)}\ge\epsilon$ along some subsequence (for some $\epsilon>0$), we would have a subsequential limit point $x_\infty\in\bar B_1^2$ with $\abs{x_\infty-\upsilon_\infty^{-1}(y_\infty)}\ge\epsilon$ and $\upsilon_\infty(x_\infty)=\lim_{k\to\infty}y_k=y_\infty$, which is a contradiction. Hence,
		\begin{align} u_k(y_k)=U_k(\upsilon_k^{-1}(y_k))\to U_\infty(\upsilon_\infty^{-1}(y_\infty))=u_\infty(y_\infty), \end{align}
		as desired. 
	\end{proof}
	
	\begin{lemmaen}[injectivity under a boundary constraint]\label{inj}
		Given $K\ge 1$ and $s,\epsilon>0$, there exists a constant $0<\delta_0<\epsilon$, depending only on $q,K,s,\epsilon$, with the following property: whenever
		\begin{itemize}
			\item $\displaystyle\Psi\in W^{1,2}\cap C^0(\bar B_1^2,\R^q)$ has $\norm{\restr{\Psi}{\de B_1^2}-\restr{\psi(s\cdot)}{\de B_1^2}}_{C^0(\de B_1^2)}\le\delta_0$ for some $\psi\in\mathcal{D}_K^\Pi$,
			\item $\displaystyle\Psi\circ\varphi^{-1}$ is harmonic and weakly conformal on $\varphi(B_1^2)$, where $\varphi:\R^2\to\R^2$ is a $K$-quasiconformal homeomorphism,\footnote{The maps $\psi$ and $\varphi$ are not necessarily related to each other.}
		\end{itemize}
		then $\Pi\circ\Psi\circ\varphi^{-1}$ is a diffeomorphism from $\varphi(\bar B_{1/2}^2)$ onto its image and
		\begin{align} \dist(\Pi,\Pi(x))<\epsilon,\qquad\Pi(x):=\text{2-plane spanned by }\nabla(\Psi\circ\varphi^{-1})(x), \end{align}
		for all $x\in\varphi(\bar B_{1/2}^2)$.
		In particular, $\Pi\circ\Psi$ is injective on $\bar B_{1/2}^2$. 
	\end{lemmaen}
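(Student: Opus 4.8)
The plan is a compactness-and-contradiction argument. Suppose the conclusion fails for some $(K,s,\epsilon)$: there are $\delta_k\downarrow 0$, maps $\Psi_k\in W^{1,2}\cap C^0(\bar B_1^2,\R^q)$, models $\psi_k\in\mathcal D_K^\Pi$, and $K$-quasiconformal homeomorphisms $\varphi_k$ of $\R^2$, satisfying the two bulleted hypotheses with $\delta_k$ in place of $\delta_0$, but such that for every $k$ either $\Pi\circ\Psi_k\circ\varphi_k^{-1}$ is not a diffeomorphism of $\varphi_k(\bar B_{1/2}^2)$ onto its image, or the tangent-plane estimate fails at some point of that set. Writing $\psi_k=\Pi^*\circ R_k\circ\tilde\varphi_k$ with $\tilde\varphi_k\in\mathcal D_K$ and $R_k$ a linear isometry onto $\Pi$, the compactness of $\mathcal D_K$ (Corollary \ref{qchomcpt}) and of the family of such isometries gives, along a subsequence, $\psi_k\to\psi_\infty:=\Pi^*\circ R_\infty\circ\tilde\varphi_\infty\in\mathcal D_K^\Pi$ locally uniformly; hence $\restr{\Psi_k}{\de B_1^2}\to\gamma_\infty:=\restr{\psi_\infty(s\cdot)}{\de B_1^2}$ in $C^0$, and $\gamma_\infty$ parametrizes a Jordan curve $\Gamma\subset\Pi$.

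Next I would normalize the reparametrizations. Post-composing $\varphi_k$ with a similarity of $\R^2$ replaces $\Psi_k\circ\varphi_k^{-1}$ by its composition with that similarity, which is again harmonic and weakly conformal, and transforms $\varphi_k(\bar B_{1/2}^2)$ and the frames $\nabla(\Psi_k\circ\varphi_k^{-1})$ in a way that leaves both the hypotheses and the negated conclusion intact. So we may assume $\varphi_k\in\mathcal D_K$, and then — again by Corollary \ref{qchomcpt} and the standard theory of planar quasiconformal homeomorphisms — that $\varphi_k\to\varphi_\infty$ locally uniformly on $\R^2$ for some $\varphi_\infty\in\mathcal D_K$. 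In particular $\Omega_k:=\varphi_k(B_1^2)$ are bounded Jordan domains, $\restr{\varphi_k}{\de B_1^2}\to\restr{\varphi_\infty}{\de B_1^2}$ in $C^0$, and the latter parametrizes $\de\Omega_\infty$, where $\Omega_\infty:=\varphi_\infty(B_1^2)$.

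The heart of the matter is to pass to the limit in this variable-domain Dirichlet problem. The map $g_k:=\Psi_k\circ\varphi_k^{-1}$ lies in $C^0(\bar\Omega_k)$ and is harmonic on $\Omega_k$, hence equals the harmonic extension of its boundary values, which — read through the parametrization $\restr{\varphi_k}{\de B_1^2}$ of $\de\Omega_k$ — are $\restr{\Psi_k}{\de B_1^2}$. Applying Lemma \ref{barrier} componentwise (with $D_k=\Omega_k$, Jordan curve $\restr{\varphi_k}{\de B_1^2}$, and boundary data the components of $\restr{\Psi_k}{\de B_1^2}$) yields $g_k\to g_\infty$ in $C^0_{loc}(\Omega_\infty)$, where $g_\infty\in C^0(\bar\Omega_\infty)$ is harmonic on $\Omega_\infty$ with boundary values $\gamma_\infty\circ(\restr{\varphi_\infty}{\de B_1^2})^{-1}$; by interior estimates the convergence is $C^\infty_{loc}$, so $g_\infty$ is also weakly conformal. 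Its boundary values lie in $\Pi$, so the maximum principle gives $\Pi^\perp\circ g_\infty\equiv 0$, i.e. $g_\infty=\Pi^*\circ u_\infty$ with $u_\infty:=\Pi\circ g_\infty:\Omega_\infty\to\Pi\cong\R^2$ harmonic and weakly conformal, hence (anti)holomorphic; moreover, read through $\restr{\varphi_\infty}{\de B_1^2}$, its boundary values parametrize $\Gamma$ homeomorphically, so by the argument principle $u_\infty$ is a diffeomorphism of $\Omega_\infty$ onto the Jordan domain bounded by $\Gamma$. Thus $g_\infty$ is an immersion on $\Omega_\infty$ with tangent plane identically $\Pi$, and $\Pi\circ g_\infty$ restricts to an embedding on every compact subset of $\Omega_\infty$.

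Finally I would transfer this back. Fix a smooth compact domain $L$ with $\varphi_\infty(\bar B_{1/2}^2)\subset\mathrm{int}(L)$ and $L\cptsub\Omega_\infty$; by uniform convergence $\varphi_k(\bar B_{1/2}^2)\subseteq L$ for $k$ large. On $L$ we have $g_k\to g_\infty$ in $C^1$, with $g_\infty$ an immersion of tangent plane $\Pi$ and $\Pi\circ\restr{g_\infty}{L}$ an embedding; hence for $k$ large $\nabla g_k$ is nondegenerate on $L$ with tangent plane within $\epsilon$ of $\Pi$ everywhere, and $\Pi\circ\restr{g_k}{L}$ is an embedding, a $C^1$-small perturbation of an embedding of a compact manifold-with-boundary being again an embedding. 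Restricting to $\varphi_k(\bar B_{1/2}^2)\subseteq L$, this says exactly that $\Pi\circ\Psi_k\circ\varphi_k^{-1}$ is a diffeomorphism of $\varphi_k(\bar B_{1/2}^2)$ onto its image with $\dist(\Pi,\Pi_k(x))<\epsilon$ there, $\Pi_k(x)$ being the $2$-plane spanned by $\nabla(\Psi_k\circ\varphi_k^{-1})(x)$ — contradicting the choice of the sequence. This yields $\delta_0>0$ (and we may shrink it below $\epsilon$), depending only on $q,K,s,\epsilon$; the final clause, injectivity of $\Pi\circ\Psi$ on $\bar B_{1/2}^2$, is then immediate since $\varphi$ is a bijection. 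The delicate points I anticipate are the normalization step — checking that post-composition with similarities is genuinely harmless, so that one lands in the compact class $\mathcal D_K$ — and the bookkeeping in the application of Lemma \ref{barrier} to the moving domains $\Omega_k$; the remaining ingredients (harmonic plus weakly conformal into a plane is (anti)holomorphic, the argument principle, and $C^1$-stability of embeddings) are routine.
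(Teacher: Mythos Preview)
Your proof is correct and follows essentially the same route as the paper: a compactness-and-contradiction argument, normalization of $\varphi_k$ into $\mathcal{D}_K$, application of Lemma \ref{barrier} to the variable-domain Dirichlet problem, the maximum principle to force the limit into $\Pi$, the argument principle to get a diffeomorphism, and finally $C^1$-convergence on a compact neighborhood $L$ of $\varphi_\infty(\bar B_{1/2}^2)$ to derive the contradiction. One small point: since $\delta_0$ is claimed to depend only on $q,K,s,\epsilon$ (not on $\Pi$), the contradiction sequence should in principle allow $\Pi=\Pi_k$ to vary; the paper handles this by passing to a subsequence $\Pi_k\to\Pi_\infty$, but your implicit fixing of $\Pi$ is harmless since a rotation of $\R^q$ reduces to a fixed plane.
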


	\begin{proof}[Proof of Lemma \ref{inj}]
		Assume by contradiction that, for a sequence $\delta_k\downarrow 0$, there exist maps $\Psi_k:B_1^2\to\R^q$, planes $\Pi_k$, homeomorphisms $\varphi_k:\R^2\to\R^2$ and coefficients $\mu_k$ such that the claim fails with $\delta_0=\delta_k$. By Corollary \ref{qchomcpt}, up to subsequences we have $\Pi_k\to\Pi_\infty$ and $\restr{\Psi_k}{\de B_1^2}\to\Gamma$, where $\Gamma:\de B_1^2\to\R^q$ is the restriction of a map in $\mathcal{D}_K^{\Pi_\infty}$.
		
		We can assume that $\varphi_k\in\mathcal{D}_K$ (replacing $\varphi_k$ with $\frac{\varphi_k-\varphi_k(0)}{\min_{\de B_1^2}|\varphi_k-\varphi_k(0)|}$). By Corollary \ref{qchomcpt}, we can assume that $\varphi_k\to\varphi_\infty$ and $\varphi_k^{-1}\to\varphi_\infty^{-1}$ in $C^0_{loc}(\R^2)$, for some homeomorphism $\varphi_\infty:\R^2\to\R^2$.

		By harmonicity, up to subsequences we get $\Theta_k:=\Psi_k\circ\varphi_k^{-1}\to\Theta_\infty$ in $C^2_{loc}(\varphi_\infty(B_1^2))$, for some $\Theta_\infty:\varphi_\infty(B_1^2)\to \R^q$, 
		so that $\Theta_\infty$ is weakly conformal and harmonic.

		On the other hand, by Lemma \ref{barrier} applied to the sequence of harmonic maps $\Theta_k$ on the Jordan domains $\varphi_k(B_1^2)$, $\Theta_\infty$ is the harmonic extension of $\Gamma\circ\varphi_\infty^{-1}$ and $\Psi_k\to\Theta_\infty\circ\varphi_\infty=:\Psi_\infty$ in $C^0(\bar B_1^2)$. By the maximum principle we have $\Pi_\infty^\perp\circ\Theta_\infty=0$ and thus $\Pi_\infty\circ\Theta_\infty$ is either holomorphic or antiholomorphic on $\varphi_\infty(B_1^2)$ (once $\Pi_\infty$ is identified with $\C$).
		
		Now, given two Jordan domains $U,V\subset\C$, if a holomorphic map $h:U\to\C$ extends to a continuous map $h:\bar U\to\C$ mapping $\de U$ onto $\de V$ homeomorphically, then $h$ maps $U$ diffeomorphically onto $V$.\footnote{Indeed, $\restr{h}{U}$ must be an open map, hence $h(\bar U)\setminus\de V$ is closed and open in $\C\setminus\de V$ and it follows that $h(U)=V$. We can find biholomorphisms $u:B_1^2\to U$ and $v:B_1^2\to V$ extending to homeomorphisms of the closures. The map $g:=v^{-1}\circ h\circ u$ satisfies $g(B_1^2)\subseteq B_1^2$ and maps $\de B_1^2$ to itself homeomorphically. Given $w\in B_1^2$, for $r<1$ close enough to $1$ the loop $g(re^{i\theta})-w$ is homotopic to $g(e^{i\theta})$ in $\C\setminus\set{0}$, so the classical argument principle gives $\#g^{-1}(w)=1$.}
		Being $\restr{\Pi_\infty\circ\Theta_\infty}{\de\varphi_\infty(B_1^2)}=\Pi_\infty\circ\Gamma\circ\varphi_\infty^{-1}$ a Jordan curve, we deduce that $\Pi_\infty\circ\Theta_\infty$ is a diffeomorphism from $\varphi_\infty(B_1^2)$ onto its image.
		
		Fix now a compact neighborhood $F$ of $\varphi_\infty(\bar B_{1/2}^2)$ in $\varphi_\infty(B_1^2)$, with smooth boundary. Since $\Theta_k\to\Theta_\infty$ in $C^1_{loc}(\varphi_\infty(B_1^2))$, we obtain that eventually $\Pi_k\circ\Theta_k$ is a diffeomorphism of $F$ onto its image, with
		\begin{align*} \dist(\Pi_k,\Pi_k(x))<\epsilon,\qquad x\in F. \end{align*}
		The fact that eventually $\varphi_k(\bar B_{1/2}^2)\subseteq F$ yields the desired contradiction.
	\end{proof}
	

	\section{Technical iteration lemmas}\label{coresec}
	
	\subsection{Informal discussion of the results}\label{coresec.informal}
	
	Since the intermediate results contained in this section have rather involved statements, with several different constants and thresholds appearing along the way, we find it helpful to provide an informal explanation of the meaning of these statements and constants, as well as a rough sketch of the underlying ideas in the proofs.
	
	This section contains four important intermediate results, namely Lemmas \ref{closetoint}, \ref{stabdist}, \ref{smallcontrib} and \ref{sqrtsigma}, which all invoke Theorem \ref{blackbox} (except for Lemma \ref{sqrtsigma}) by means of a compactness-and-contradiction argument.
	All statements are about a conformal immersion $\Psi:\bar B_r^2(z)\to\subman_{p,\ell}$, critical for the functional \eqref{rescfunc} (on the interior).
	For simplicity, in this discussion we assume $z=0$ and $r=1$. The first three statements require the following:
	\begin{itemize}
		\item[(i)] a control of the shape of the images of three circles, dictated by a distortion constant $K$; namely we require that
		\begin{align*}
			&\Psi\in\mathcal{R}_{K,\delta_0}^\Pi
		\end{align*}
		for some $2$-plane $\Pi$ and some small $\delta_0$; recall from Section \ref{notsec} that this means that (up to rotations of $\R^\envdim$) $\Psi$ is $C^0$-close to a $K$-quasiconformal homeomorphism $\varphi\in\mathcal{D}_K:\R^2\to\R^2\subseteq\R^\envdim$ on the three circles $\de B_1^2$, $\de B_{s(K)}^2$, $\de B_{s(K)^2}^2$ (it would be far too restrictive to ask for $C^0$-closeness on all of $B_1^2$);
		\item[(ii)] an upper bound $E$ on the Dirichlet energy $\mz\int_{B_1^2}|\nabla\Psi|^2$;
		\item[(iii)] an upper bound $V$ on the area (divided by $\pi$) of the immersed surface $\Psi(B_1^2)\cap B_1^\envdim$, taking into account multiplicity; namely,
		\begin{align*}
			&\int_{\Psi^{-1}(B_1^q)}\,d\operatorname{vol}_{g_\Psi}=\mz\int_{\Psi^{-1}(B_1^q)}|\nabla\Psi|^2\le V\pi,
		\end{align*}
		where $g_{\Psi}$ is the pullback of the Euclidean metric, which equals $\mz|\nabla\Psi|^2\delta$ by conformality;
		this upper bound will give a crucial improvement on the last conclusions of Theorem \ref{blackbox}, as discussed below.
	\end{itemize}

	Also, in the same spirit as Theorem \ref{blackbox}, these lemmas assume $\tau,\ell\ll 1$ and
	\begin{align*}
		&\tau^2\log(\tau^{-1})\int_{B_1^2}|A|^4\,d\operatorname{vol}_{g_\Psi}\ll\int_{B_1^2}\,d\operatorname{vol}_{g_\Psi}.
	\end{align*}
	In Lemmas \ref{stabdist} and \ref{smallcontrib}, the closeness in (i) is measured by a threshold $\delta_0$
	(which will be specified according to Lemma \ref{inj}), while other closeness or smallness constraints will be measured by
	thresholds $\epsilon_0,\epsilon_0',\epsilon_0''$ in Lemmas \ref{closetoint}, \ref{stabdist}, \ref{smallcontrib}, respectively.
	
	Observe that the hypotheses guarantee that $\Pi\circ\Psi$ maps $B_{s(K)}^2$ to a subset of $B_{1/2}^\Pi$ and $\de B_1^2$ to a subset of $\Pi\setminus B_1^\Pi$ (approximately), hence $\Psi(B_{s(K)}^2)$ is far away from $\Psi(\de B_1^2)$. Hence, when arguing by contradiction, we can apply the last part of Theorem \ref{blackbox} and obtain in the limit a parametrized stationary varifold close to $\Psi|_\omega$ (we will choose either $\omega:=B_{s(K)}^2$ or the smaller domain $\omega:=B_{s(K)^2}^2$).	
	The reason to impose the geometric control on three circles, rather than two, is merely technical and is convenient for the proofs.
	
	Lemma \ref{closetoint} says that the \emph{projected multiplicity} $N(\Psi,B_{s(K)^2}^2,\Pi)$ (introduced in Section \ref{notsec}) issued by $\Psi$ from the ball $B_{s(K)^2}^2$ has an average close to a positive integer $k$, on the ball $B_{\eta(K)}^\Pi$.
	It also asserts that this holds for $2$-planes $\Pi'$ close enough to $\Pi$. As a consequence, the corresponding \emph{macroscopic multiplicity} will be precisely $k$.
	
	Observe that the hypotheses guarantee that $\Pi\circ\Psi$ maps $B_{s(K)^2}^2$ approximately to a superset of $B_{\eta(K)}^\Pi$ (see Section \ref{notsec} for the definition of these geometrical constants). Hence, arguing by contradiction, we obtain in the limit a (parametrized) stationary varifold which is close, in the varifold sense, to $\Psi(B_{s(K)^2}^2)$. The constraints on $\Psi$ force this limiting varifold to lie on a $2$-plane, so by the constancy theorem it has constant integer multiplicity on $B_{\eta(K)}^\Pi$, giving a contradiction.
	Note that the volume constraint $V$ is not used here.
	
	As already mentioned in the introduction, we would now like to find a decreasing sequence of radii $r_0:=1,\dots,r_k\approx\sqrt{\tau}$,
	with $r_j$ comparable to $r_{j+1}$, such that the maps $\Psi(r_j\cdot)$ satisfy the same assumptions (with different scales $\ell_0:=\ell,\dots,\ell_k$ in the target).
	The strategy to get Theorem \ref{main} is then to show that the corresponding macroscopic multiplicities $n_j$ \emph{do not change} from one scale to the next one: $n_0=n_1=\cdots=n_k$. At the smallest scale, we will be able to say that the immersed surface $\ell_k^{-1}\Psi(B_{r_k}^2)$ has small second fundamental form in $L^4$, implying a strong graphical control that allows to conclude $n_k=1$ and thus $n_0=1$. In the situation where we will apply this strategy (namely, in Section \ref{multonesec}),
	upon careful selection of the center $z$, it will be easy to impose the ``maximal'' bounds
	\begin{align*}
		&(\ell')^{-2}\int_{\Psi^{-1}(B_{\ell'}^\envdim)}\,d\operatorname{vol}_{g_\Psi}\le V\pi,
		&\tau^2\log(\tau^{-1})\int_{B_{r'}^2}|A|^4\,d\operatorname{vol}_{g_\Psi}\ll\int_{B_{r'}^2}\,d\operatorname{vol}_{g_\Psi},
	\end{align*}
	for \emph{all} $0<\ell'<1$ and $0<r'<1$, by means of covering arguments. However, we \emph{cannot} a priori impose similar bounds on the Dirichlet energy and on the shape of the images of small circles (items (ii) and (i)). Note that if $(\ell')^{-1}\Psi(r'\cdot)$ satisfies (i), then we can bound the Dirichlet energy of this rescaled map on the ball $B_{s(K)}^2$, in terms of $V$, as $\Psi$ maps $B_{s(K)r'}^2$ into $B_{\ell'}^\envdim$ (approximately). So (i) would give (ii) for free (on a smaller domain ball), with a uniform bound (depending on $K$, $V$) in place of $E$.
	
	Lemma \ref{stabdist} is the main technical workhorse, and essentially says that we can circumvent this difficulty: namely, the hypotheses (i)--(iii) are still satisfied for a smaller radius $r'\le\frac{r}{2}$ in the domain, with a smaller scale $\ell'\ell\le\frac{\ell}{2}$ in the codomain. Note that the reference point $p$ also changes; this will in principle destroy the maximal volume bound, but we can still recover (iii) in the new situation, exploiting the fact that the multiplicity is quantized in the limit (see the proof of Lemma \ref{stabdist} and Definition \ref{quanttrick.def} for the details).

	The idea of the proof of Lemma \ref{stabdist} is that, up to a quasiconformal homeomorphism $\varphi$,
	$\Psi$ is close (in the weak $W^{1,2}$-topology) to a conformal harmonic map with small oscillation with respect to $\Pi$.
	Hence, by Lemma \ref{inj}, it will be arbitrarily close to an affine injective conformal map $L$ on smaller and smaller balls $B_{r'}^2$.
	If $\varphi$ were the identity, given a (finite) collection of circles centered at $0$ we would get $C^0$-closeness of $\Psi(r'\cdot)$ to $L$ on all these circles, for some $r'$ small, and we would be done.
	
	The important observation now is that the distortion constant of $\varphi_\infty$ can be bounded solely in terms of $V$: indeed, as in the proof of Theorem \ref{blackbox}, $\Psi(B_1^2)\cap B_1^\envdim$ is close to a stationary varifold $\vfd$ (in $B_1^\envdim$), whose density on $B_{1/2}^\envdim$ is bounded in terms of $V$. Since $\Psi(B_{s(K)}^2)\subseteq B_{1/2}^\envdim$ (approximately), the upper bound on the distortion constant given by Theorem \ref{blackbox} can be improved to a constant $K'(V)$ depending \emph{only} on $V$. Hence, we get (i) also for a smaller radius $r'$ (with $K'(V)$ replacing $K$) and, as already said, this gives also (ii) with a bound $E'(V)$ in place of $E$.
	Our sequence of radii is now obtained by iterated application of Lemma \ref{stabdist} with parameters $K'(V)$, $E'(V)$, $V$.
	
	Given constants $K''$, $E''$ and $V$, which will be chosen when applying these results in Section \ref{multonesec},
	we then fix $K_0:=\max\{K'(V),K''\}$ and $E_0:=\max\{E'(V),E''\}$, so that all the statements apply for all radii $r_0,r_1,\dots,r_k$.
	
	Lemma \ref{smallcontrib} says that the macroscopic multiplicity does not change after applying Lemma \ref{stabdist},
	namely when replacing the domain and codomain scales $r$, $\ell$ with $r'$, $\ell\ell'$ (and $p,\Pi$ with $p',\Pi'$).
	Its proof uses Lemma \ref{inj} to claim that $\Psi$ is approximately a graph over $\Pi$, and then applies the constancy theorem (in the limiting situation).
	
	Finally, as it will be clear along the proof of Theorem \ref{iteration}, Lemma \ref{sqrtsigma} concerns the behaviour of a conformal immersion $\Phi:B_1^2\to\subman$ at a scale (comparable to) $\ell:=\sqrt{\sigma}$ in the codomain, when $\Phi$ is critical for \eqref{functional}.
	Assume that $\Phi(B_r^2)$ has diameter approximately $\ell^2$, and assume the smallness
	\begin{align}\label{ffsmallness}
		&\sigma^2\int_{B_r^2}|A|^4\,d\operatorname{vol}_{g_\Phi}\ll\int_{B_r^2}\,d\operatorname{vol}_{g_\Phi}
	\end{align}
	and the bound $\int_{B_r^2}\,d\operatorname{vol}_{g_\Phi}\le C\ell^2$. When dilating the codomain by a factor $\ell^{-1}$,
	\eqref{ffsmallness} becomes $\int_{B_r^2}|A_\Psi|^4\,d\operatorname{vol}_{g_\Psi}\ll 1$, for $\Psi:=\ell^{-1}(\Phi-\Phi(0))$.
	As $\Psi$ is conformal, we have $\Delta\Psi=2H_\Psi e^{2\lambda}$ (where $e^\lambda$ is the conformal factor).
	Thus, we get that $\Delta\Psi$ is small in $L^4$ provided we can obtain an upper bound on $\lambda$;
	once this is done, by Sobolev's embedding we obtain a $C^1$-control on $\Psi$, which implies that the macroscopic multiplicity is $1$ at this scale.
	
	In order to bound $\lambda$, we use a result by H\'elein (belonging to a broad class of phenomena of integrability by compensation,
	whose study dates back to the discovery of Wente's inequality), guaranteeing the existence of an orthonormal frame
	$\set{e_1(z),e_2(z)}$ for the tangent space of the immersed surface $\Psi$, with a bound on $\norm{\nabla e_i}_{L^2}$ depending only the $L^2$-norm of the second fundamental form. Then we show that
	\begin{align*}
		&-\Delta\lambda=\de_1e_1\cdot\de_2e_2-\de_2e_1\cdot\de_1e_2
	\end{align*}
	and we compare $\lambda$ with the solution $\mu$ to the same equation, with zero boundary conditions on a ball.
	A pointwise bound for $\mu$ now follows from Wente's inequality, from which one easily deduces the desired upper bound for $\lambda$.
	Although not necessary, we will also show how to obtain a pointwise lower bound on $\lambda$ in this situation.	
	
	While reading Sections \ref{coresec} and \ref{multonesec}, it can be useful to refer to the following diagrams, illustrating how the constants depend on each other:
	
	\begin{center}
		\begin{tikzcd}[column sep=15pt]
		K & & V & & E \\
		& & & & \\
		& \epsilon_0 \ar[from=1-1] \ar[from=1-3] \ar[from=1-5] & & & \epsilon_0' \ar[from=1-1,crossing over] \ar[from=1-3,crossing over] \ar[from=1-5] \ar[from=4-1] \\
		\delta_0 \ar[from=1-1] \ar[from=3-2] & & & &
		\end{tikzcd}
		\hspace{30pt}
		\begin{tikzcd}[column sep=15pt]
		& & V & & \\
		K'' & K'(V) \ar[from=1-3] & & E'(V) \ar[from=1-3] & E'' \\
		& K_0 \ar[from=2-1] \ar[from=2-2] & & E_0 \ar[from=2-4] \ar[from=2-5] & \\
		& \epsilon_0' & \epsilon_0'' \ar[from=1-3] \ar[from=3-2] \ar[from=3-4] \ar[from=4-2] & &
		\end{tikzcd}
	\end{center}
	
	\noindent(an arrow $A\to B$ means that $B$ depends on $A$).
	
	\subsection{Rigorous statements and proofs}\label{coresec.proofs}
	
	We now make the above discussion rigorous. In a first reading, it can be helpful to pretend that all quasiconformal homeomorphisms appearing in the proofs coincide with the identity.
	
	\begin{definition}\label{szero}
		Given $V>0$ with $V=\floor{V}+\mz$, we define the constants
		\begin{align*}
			&K'(V):=(4V)^2,\qquad E'(V):=2\pi K'(V) D(K'(V))^2.
		\end{align*}
		In the sequel, it will be convenient to assume always that $V\in\N+\frac{1}{2}$, so that $V=\floor{V}+\mz$.
	\end{definition}

	\begin{lemmaen}[almost integrality of averaged projected multiplicity]\label{closetoint}
		There exists $0<\epsilon_0<\eta(K)$, depending on $E,V>0$, $K\ge 1$ and 
		$\subman$, such that whenever
		$\Psi\in C^2(\bar B_{r}^2(z),\subman_{p,\ell})$ is a conformal immersion, critical for the functional \eqref{rescfunc} on $B_{r}^2(z)$, and $\Pi,\Pi'$ are 2-planes satisfying
		\begin{itemize}
			\item $\displaystyle\Psi(z+r\cdot)\in\mathcal{R}_{K,\epsilon_0}^\Pi$,
			\item $\displaystyle\mz\int_{B_{r}^2(z)}\abs{\nabla\Psi}^2\le E$,
			\item $\displaystyle\int_{\Psi^{-1}(B_1^q)}\,d\operatorname{vol}_{g_\Psi}=\mz\int_{\Psi^{-1}(B_1^q)}\abs{\nabla\Psi}^2\le V\pi$,
			\item $\displaystyle\tau^2\log(\tau^{-1})\int_{B_{r}^2(z)}\abs{A}^4\,d\operatorname{vol}_{g_\Psi}\le\epsilon_0$ for some $0<\tau\le\epsilon_0$,
			\item $\displaystyle\dist(\Pi,\Pi')\le\epsilon_0$ and $0<\ell\le\epsilon_0$,
		\end{itemize}
		then the projected multiplicity $N(\Psi,B_{s(K)^2r}^2(z),\Pi)$ satisfies
		\begin{align} \dist\Big(\media_{B_{\eta(K)}^\Pi}N(\Psi,B_{s(K)^2r}^2(z),\Pi),\,\Z^+\Big)<\frac{1}{8}, \end{align}
		\begin{align}\Bigg|\media_{B_{\eta(K)}^\Pi}N(\Psi,B_{s(K)^2r}^2(z),\Pi)
		-\media_{B_{\eta(K)}^{\Pi'}}N(\Psi,B_{s(K)^2r}^2(z),\Pi')\Bigg|<\frac{1}{8}, \end{align}
		where $\Z^+$ is the set of positive integers.
	\end{lemmaen}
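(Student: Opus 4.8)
The plan is to argue by contradiction via the compactness package of Theorem~\ref{blackbox}. Suppose no such $\epsilon_0$ exists: then for some fixed $E,V,K,\subman$ there are $\epsilon_k\downarrow 0$, conformal critical immersions $\Psi_k\in C^2(\bar B^2_{r_k}(z_k),\subman_{p_k,\ell_k})$ and pairs of $2$-planes $\Pi_k,\Pi_k'$ satisfying the five listed hypotheses with $\epsilon_0=\epsilon_k$, but violating one of the two conclusions. After the rescaling $\Psi_k(z_k+r_k\,\cdot\,)$ (which is legitimate by the last displayed identity in Section~\ref{notsec}, since the macroscopic multiplicity is scale-invariant) we may assume $z_k=0$, $r_k=1$; note $\tau_k\le\epsilon_k\to0$, $\ell_k\le\epsilon_k\to0$, the Dirichlet energies are bounded by $E$, and $\tau_k^2\log(\tau_k^{-1})\int_{B_1^2}|A|^4\,d\operatorname{vol}_{g_{\Psi_k}}\le\epsilon_k\to0$. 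By the compactness of $\mathcal{D}_K$ (Corollary~\ref{qchomcpt}) and of $\mathrm{Gr}(2,\R^q)$, after passing to a subsequence we have $\Pi_k\to\Pi_\infty$, $\Pi_k'\to\Pi_\infty$ (they have the same limit since $\dist(\Pi_k,\Pi_k')\le\epsilon_k$), and the boundary traces $\restr{\Psi_k}{\de B^2_r}$ on each of the three circles $r\in\{1,s(K),s(K)^2\}$ converge uniformly to $\restr{\psi_\infty(r\,\cdot\,)}{\de B_r^2}$ for a single $\psi_\infty\in\mathcal{D}_K^{\Pi_\infty}$.

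Next I would set up the limit on the domain ball $\omega:=B^2_{s(K)}$. The hypothesis $\Psi_k(\cdot)\in\mathcal{R}^{\Pi_k}_{K,\epsilon_k}$ forces $\Pi_k\circ\Psi_k$ to map $\de B^2_1$ into $\Pi_k\setminus B_1^{\Pi_k}$ (up to $\epsilon_k$) and $\de B^2_{s(K)}$ into $B^{\Pi_k}_{1/2}$ (up to $\epsilon_k$), using $D(K)$ and the definition of $s(K)$; hence in the limit $\Psi_\infty(\bar\omega)$ is disjoint from $\Gamma_\infty:=\psi_\infty(\de B^2_1)$, so Theorem~\ref{blackbox} applies with this $\omega$. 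We obtain a quasiconformal homeomorphism $\varphi_\infty$, a locally constant $N_\infty\in L^\infty(\omega,\Z^+)$, and the varifold convergence of $\restr{\Psi_k}{\omega}$ to the parametrized stationary varifold $(\varphi_\infty(\omega),\Theta_\infty,N_\infty\circ\varphi_\infty^{-1})$ with $\Theta_\infty:=\Psi_\infty\circ\varphi_\infty^{-1}$, together with the measure convergence \eqref{blackboxmeas}: $\tfrac12|\nabla\Psi_k|^2\mathcal{L}^2\weakstarto N_\infty\,|\de_1\Psi_\infty\wedge\de_2\Psi_\infty|\,\mathcal{L}^2$ on $\omega$. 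The crucial rigidity input is that the boundary values $\restr{\Psi_\infty}{\de\omega}$ lie in the $2$-plane $\Pi_\infty$ (they equal $\restr{\psi_\infty(s(K)\,\cdot\,)}{\de B^2_1}$, which takes values in $\Pi_\infty$); by the maximum principle applied coordinatewise (Theorem~\ref{blackbox} already gives that $\Theta_\infty$ is harmonic on $\varphi_\infty(\omega)$), we get $\Pi_\infty^\perp\circ\Theta_\infty\equiv0$, i.e.\ the limiting varifold is supported in the $2$-plane $\Pi_\infty$. By Allard's constancy theorem the stationary varifold is then a constant integer multiple $m\in\Z^+$ of $\Pi_\infty$ restricted to $\Theta_\infty(\varphi_\infty(\omega))$; moreover $\Pi_\infty\circ\Theta_\infty$, being holomorphic (or antiholomorphic) with Jordan boundary $\Pi_\infty\circ\psi_\infty(s(K)\,\cdot\,)$, is a diffeomorphism onto its image, which contains $B^{\Pi_\infty}_{\eta(K)}$ (since $\psi_\infty\in\mathcal{D}^{\Pi_\infty}_K$ and $B^2_{s(K)^2}$ is mapped over a superset of $B_{\eta(K)}$, by the definition of $\eta(K)$). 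Consequently the density (= projected multiplicity counted with the covering degree $m$, which is $1$ here) of the limiting varifold equals $m$ on $B^{\Pi_\infty}_{\eta(K)}$.

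Now I would translate this back to the averaged projected multiplicity of $\Psi_k$. Using \eqref{blackboxmeas} and the area formula, for any $2$-plane $\Pi$ close to $\Pi_\infty$ one has
\[
\media_{B^{\Pi}_{\eta(K)}}N\big(\Psi_k,B^2_{s(K)^2},\Pi\big)
\;\longrightarrow\;
\media_{B^{\Pi_\infty}_{\eta(K)}}\Big(\text{density of the limit varifold}\Big)
\;=\;m,
\]
where the restriction to the smaller ball $B^2_{s(K)^2}$ (rather than $\omega=B^2_{s(K)}$) is exactly what makes the pushforward $\Pi\circ\Psi_k$ a proper covering over $B^{\Pi}_{\eta(K)}$, so that integrating the projected multiplicity is the same as integrating the varifold density (this is where the three-circle control on $\de B^2_{s(K)^2}$, together with the convex hull property from Theorem~\ref{blackbox}, guarantees no mass escapes). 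Applying this with $\Pi=\Pi_k$ and with $\Pi=\Pi_k'$ — both converging to $\Pi_\infty$, and the convergence of the averages being uniform over $2$-planes in a small neighborhood of $\Pi_\infty$ by the uniform boundary control — yields that for $k$ large both averages are within $\tfrac18$ of the integer $m$, hence within $\tfrac18$ of $\Z^+$ and within $\tfrac18$ of each other. This contradicts the assumed failure of the two conclusions, proving the lemma. The main obstacle is the last step: justifying that the weak-$*$ measure limit \eqref{blackboxmeas} on the domain really computes $\media_{B^{\Pi}_{\eta(K)}}N(\Psi_k,B^2_{s(K)^2},\Pi)$ in the limit — one must rule out multiplicity escaping near $\de B^2_{s(K)^2}$ or near the boundary of $B^{\Pi}_{\eta(K)}$, and control the dependence on the $2$-plane $\Pi$ uniformly; this requires carefully combining the geometric control encoded in $\mathcal{R}^\Pi_{K,\epsilon_k}$ (shape of the three boundary circles) with the convex hull property and the tightness statement of Theorem~\ref{blackbox}, so that the relevant preimages stay in a fixed compact subset of $B^2_{s(K)}$ on which \eqref{blackboxmeas} is valid.
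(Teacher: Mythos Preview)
Your proposal is correct and follows the same contradiction-and-compactness strategy as the paper: extract a limit via Theorem~\ref{blackbox}, observe that the limit varifold issued by $B_{s(K)^2}^2$ is supported in $\Pi_\infty$ and stationary on $B_{\eta(K)}^{\Pi_\infty}$, apply the constancy theorem to get an integer density $\nu$, and conclude that both averaged projected multiplicities converge to $\nu$.

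Two points where the paper is more economical. First, you invoke harmonicity of $\Theta_\infty$ (attributing it to Theorem~\ref{blackbox}, though it is really Theorem~\ref{regthm}) and then a diffeomorphism claim for $\Pi_\infty\circ\Theta_\infty$; neither is needed. The paper uses only the convex hull property from Theorem~\ref{blackbox} to force $\Psi_\infty(\bar B_{s(K)^2}^2)\subset\Pi_\infty$, then the constancy theorem for stationary varifolds directly on $B_{\eta(K)}^{\Pi_\infty}$, with positivity of $\nu$ coming from Lemma~\ref{winded}. Second, the ``main obstacle'' you flag is bypassed in the paper by working in the \emph{target} rather than the domain: the area formula gives the exact identity
\[
\media_{B_{\eta(K)}^{\Pi_k}}N(\Psi_k,B_{s(K)^2}^2,\Pi_k)=\frac{\norm{(\Pi_k)_*\vfd_k''}(B_{\eta(K)}^{\Pi_k})}{\pi\eta(K)^2},
\]
and then the varifold convergence $\vfd_k''\weakstarto\vfd_\infty''$ together with tightness of $\norm{\vfd_k''}$ (both from Theorem~\ref{blackbox}) passes this to the limit immediately, since $\norm{(\Pi_\infty)_*\vfd_\infty''}(\de B_{\eta(K)}^{\Pi_\infty})=0$. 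Your route through the domain measure convergence~\eqref{blackboxmeas} can be made to work but is less direct, which is exactly why you perceive it as an obstacle.
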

	
	\begin{proof}[Proof of Lemma \ref{closetoint}]
		We can assume $z=0$ and $r=1$. Suppose by contradiction that there exist sequences $\epsilon_k\downarrow 0$, $\tau_k$, $\ell_k$, points $p_k$, maps $\Psi_k$ and planes $\Pi_k,\Pi_k'$ making the claim false for $\epsilon_0=\epsilon_k$. Up to subsequences, we can assume that $\Pi_k,\Pi_k'\to\Pi_\infty$, that $\Psi_k$ has a weak limit $\Psi_\infty$ in $W^{1,2}(B_1^2,\R^q)$, with traces $\restr{\Psi_\infty}{\de B_s^2}(s\cdot)=\psi(s\cdot)$ for some $\psi\in\mathcal{D}_K^{\Pi_\infty}$ and all $s\in\set{1,s(K),s(K)^2}$ (thanks to Corollary \ref{qchomcpt}), and that the varifolds $\vfd_k$ induced by $\Psi_k$ converge to a varifold $\vfd_\infty$ in $\R^q$.
		
		We now invoke Theorem \ref{blackbox}. Recalling the definition of $\mathcal{D}_K^{\Pi_\infty}$ and $s(K)$ from Section \ref{notsec}, the convex hull property satisfied by $\Psi_\infty$ gives
		\begin{align}\label{disttarget1}
		&\Psi_\infty(\bar B_{s(K)}^2)\subseteq\co{\Psi_\infty(\de B_{s(K)}^2)}\subseteq\bar B_{1/2}^\envdim,
		\end{align}
		so that, being $\Gamma_\infty=\Psi_\infty(\de B_1^2)$ disjoint from $B_1^\envdim$,
		\begin{align}\label{disttarget2}
		&\dist(\Psi_\infty(x),\Gamma_\infty)\ge\mz\quad\text{for }x\in\bar B_{s(K)}^2.
		\end{align}
		Theorem \ref{blackbox} gives the varifold convergence $\vfd_k'\weakstarto\vfd_\infty'$ and $\vfd_k''\weakstarto\vfd_\infty''$ as $k\to\infty$, as well as the tightness of the sequences of mass measures $\norm{\vfd_k'}$ and $\norm{\vfd_k''}$, where $\vfd_k'$ and $\vfd_k''$ are the varifolds issued by $\restr{\Psi_k}{B_{s(K)}^2}$ and $\restr{\Psi_k}{B_{s(K)^2}^2}$ respectively, while $\vfd_\infty'$ and $\vfd_\infty''$ are the ones issued by $(\varphi_\infty(B_{s(K)}^2),\Psi_\infty\circ\varphi_\infty^{-1},N_\infty\circ\varphi_\infty^{-1})$ and $(\varphi_\infty(B_{s(K)^2}^2),\Psi_\infty\circ\varphi_\infty^{-1},N_\infty\circ\varphi_\infty^{-1})$.\footnote{The fact that one can choose the same multiplicity $N_\infty$ and the same quasiconformal homeomorphism $\varphi_\infty:\R^2\to\R^2$ for both domains is evident from the proof of Theorem \ref{blackbox}.}
		
		Although not needed in the present proof, let us remark the following improvement on the last statement in Theorem \ref{blackbox}: we have
		\begin{align*}
			&N_\infty\le\frac{V\pi}{\pi\Big(\mz\Big)^2}=4V
		\end{align*}
		and the distortion constant of $\varphi_\infty$ is bounded by $K'(V)=(4V)^2$. Indeed, since
		$\vfd_\infty$ is stationary in $B_1^\envdim$ and $\|\vfd_\infty\|(B_1^\envdim)\le V\pi$, by the monotonicity formula its density is bounded by $\frac{V\pi}{\pi(1-|p|)^2}$ at any $p\in B_1^\envdim$.
		In particular, \eqref{disttarget1} gives an upper bound $4V$ at points in $\Psi_\infty(B_{s(K)}^2)$, which implies our claim.

		The support of $\vfd_\infty''$ is contained in the plane $\Pi_\infty$, by the convex hull property enjoyed by $\Psi_\infty$ and the fact that $\Psi_\infty$ maps $\de B_{s(K)^2}^2$ to $\Pi_\infty$.
		Since $\Psi_\infty(\de B_{s(K)^2}^2)$ does not intersect $B_{\eta(K)}^{\Pi_\infty}$, the varifold $\vfd_\infty''$ is stationary here and thus, by the constancy theorem \cite[Theorem~41.1]{simon}, it has a constant density $\nu\in\N$.
		We must have $\nu>0$, since $\Psi_\infty(B_{s(K)^2}^2)$ is a superset of $B_{\eta(K)}^{\Pi_\infty}$ by Lemma \ref{winded} (applied to $\eta(K)^{-1}\Psi_\infty(s(K)^2\cdot)$).
		The area formula and the tightness of $\norm{\vfd_k''}$ then give
		\begin{align*}
			\media_{B_{\eta(K)}^{\Pi_k}}N(\Psi_k,B_{s(K)^2}^2,\Pi_k)=\frac{\norm{(\Pi_k)_*\vfd_k''}(B_{\eta(K)}^{\Pi_k})}{\pi \eta(K)^2}\to\frac{\norm{(\Pi_\infty)_*\vfd_\infty''}(B_{\eta(K)}^{\Pi_\infty})}{\pi \eta(K)^2}=\nu.
		\end{align*}
		Similarly, $\media_{B_{\eta(K)}^{\Pi_k'}}N(\Psi_k,B_{s(K)^2}^2,\Pi_k')\to\nu$ as $k\to\infty$. Hence the claim is eventually true, yielding the desired contradiction.
	\end{proof}

	We now specify $\delta_0$ so that Lemma \ref{inj} applies, with $\epsilon:=\epsilon_0$ and $s:=s(K)$. Note that $\delta_0<\epsilon_0<\eta(K)$ and that $\epsilon_0$ and $\delta_0$ still depend on $V$, $K$ and $E$.

	\begin{lemmaen}[existence of a smaller good scale]\label{stabdist}
		Given $E>0$ and $K\ge 1$ there exists a constant $0<\epsilon_0'<\epsilon_0$ 
		(depending on $E,V,K,\subman$) with the following property:
		if a conformal immersion $\Psi\in C^2(\bar B_{r}^2(z),\subman_{p,\ell})$ is critical for the functional \eqref{rescfunc} (on the interior) and satisfies
		\begin{itemize}
			\item $\displaystyle\Psi(z+r\cdot)\in\mathcal{R}_{K,\delta_0}^{\Pi}$,
			\item $\displaystyle\mz\int_{B_{r}^2(z)}\abs{\nabla\Psi}^2\le E$,
			\item $\displaystyle\frac{1}{\pi}\int_{\Psi^{-1}(B_1^q)}\,d\operatorname{vol}_{g_\Psi},\,\frac{1}{\pi\eta(K)^2}\int_{\Psi^{-1}(B_{\eta(K)}^q)}\,d\operatorname{vol}_{g_\Psi}\le V$,
			\item $\displaystyle\tau^2\log(\tau^{-1})\int_{B_{r}^2(z)}\abs{A}^4\,d\operatorname{vol}_{g_\Psi}\le\epsilon_0'$ for some $0<\tau\le\epsilon_0'$,
			\item $\displaystyle 0<\ell\le\epsilon_0'$,
		\end{itemize}
		then there exist a new point $p'\in\subman_{p,\ell}$, new scales $r',\ell'$ and a new 2-plane $\Pi'$ with
		\begin{itemize}
			\item $\displaystyle\epsilon_0'r<r'<s(K)r$,
			\item $\displaystyle\epsilon_0'<\ell'<\mz$,
			\item $\displaystyle\dist(\Pi,\Pi')<\epsilon_0$,
			\item $\displaystyle(\ell')^{-1}(\Psi(z+r'\cdot)-p')\in\mathcal{R}_{K'(V),\delta_0}^{\Pi'}$,
			\item $\displaystyle\mz\int_{B_{r'}^2(z)}\abs{\nabla\Psi'}^2<E'(V)$, for $\Psi':=(\ell')^{-1}(\Psi-p')$ (defined on $B_{r'}^2(z)$),
			\item $\displaystyle\frac{1}{\pi}\int_{(\Psi')^{-1}(B_1^q)}\,d\operatorname{vol}_{g_{\Psi'}},\,\frac{1}{\pi \eta(K)^2}\int_{(\Psi')^{-1}(B_{\eta(K)}^q)}\,d\operatorname{vol}_{g_{\Psi'}}<\Big\lfloor\pa{\frac{\eta(K)}{\eta(K)-\epsilon_0}}^2 V\Big\rfloor+\mz$.
		\end{itemize}		
	\end{lemmaen}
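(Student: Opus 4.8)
The plan is a compactness–contradiction argument in the spirit of Lemma~\ref{closetoint}, built around a blow-up of the limiting (quasiconformal) harmonic map at the center. Normalising $z=0$, $r=1$, suppose the statement fails for a sequence $\epsilon_k\downarrow 0$: there are conformal immersions $\Psi_k\in C^2(\bar B_1^2,\subman_{p_k,\ell_k})$, critical for \eqref{rescfunc}, planes $\Pi_k$, and $0<\tau_k,\ell_k\le\epsilon_k$ fulfilling all the hypotheses with $\epsilon_0'=\epsilon_k$, but for which no admissible $(p',r',\ell',\Pi')$ exists. Up to a subsequence I would extract $\Pi_k\to\Pi_\infty$, the weak limit $\Psi_k\weakto\Psi_\infty$ in $W^{1,2}(B_1^2,\R^q)$ with boundary traces on the three circles converging (Corollary~\ref{qchomcpt}) to those of some $\psi\in\mathcal{D}_K^{\Pi_\infty}$, and varifold limits. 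Since $\Psi_\infty(\bar B_{s(K)}^2)\subseteq\bar B_{1/2+\delta_0}^q$ while $\Gamma_\infty=\Psi_\infty(\de B_1^2)$ is far from it, Theorem~\ref{blackbox} applies on $\omega:=B_{s(K)}^2$; exactly as in Lemma~\ref{closetoint}, the hypothesis $\tfrac1\pi\int_{\Psi_k^{-1}(B_1^q)}d\operatorname{vol}_{g_{\Psi_k}}\le V$ and the monotonicity formula bound the (integer) multiplicity $N_\infty$ by $4V$, hence the distortion of the associated $\varphi_\infty$ by $K'(V)=(4V)^2$. By Theorem~\ref{regthm}, $\Theta_\infty:=\Psi_\infty\circ\varphi_\infty^{-1}$ is conformal harmonic; since $\Psi_\infty|_{B_{s(K)}^2}$ is nonconstant, $N_\infty$ is a constant integer $\bar N$ on $B_{s(K)}^2$, and comparing $\tfrac1{\pi\eta(K)^2}\int_{\Psi_k^{-1}(B_{\eta(K)}^q)}d\operatorname{vol}_{g_{\Psi_k}}\le V$ with Lemma~\ref{winded} (which gives $\Psi_\infty(B_{s(K)^2}^2)\supseteq B_{\eta(K)}^{\Pi_\infty}$ with degree one) forces $\bar N\le V$.

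Next I would apply Lemma~\ref{inj} to a rescaling of $\Psi_\infty$ with $s:=s(K)$ and $\epsilon:=\tfrac{\epsilon_0}{2}$ (legitimate, since $\restr{\Psi_\infty}{\de B_{s(K)}^2}$ is $\delta_0$-close to the relevant trace of an element of $\mathcal{D}_K^{\Pi_\infty}$ and $\Theta_\infty$ is conformal harmonic): this gives that $\Pi_\infty\circ\Theta_\infty$ is a diffeomorphism near $\varphi_\infty(0)$ with tangent plane everywhere $\tfrac{\epsilon_0}{2}$-close to $\Pi_\infty$, so $\Theta_\infty$ is a smooth immersion there and $\Pi':=$ (its tangent $2$-plane at $\varphi_\infty(0)$) satisfies $\dist(\Pi_\infty,\Pi')<\tfrac{\epsilon_0}{2}$. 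Then I would blow up at $0$: for $r'>0$ write $\varphi_\infty(r'x)=\varphi_\infty(0)+c_{r'}\phi_{r'}(x)$ with $c_{r'}:=\min_{|y|=1}|\varphi_\infty(r'y)-\varphi_\infty(0)|\to 0$ and $\phi_{r'}\in\mathcal{D}_{K'(V)}$, and set $\ell':=c_{r'}\,|d\Theta_\infty(\varphi_\infty(0))|$; a Taylor expansion of $\Theta_\infty$ at $\varphi_\infty(0)$ gives $(\ell')^{-1}(\Psi_\infty(r'\,\cdot)-\Psi_\infty(0))=(\Pi')^{*}\circ R\circ\phi_{r'}+O(c_{r'})$ uniformly on $\bar B_1^2$, where $R:\R^2\to\Pi'$ is the linear isometry induced by $d\Theta_\infty(\varphi_\infty(0))$. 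Hence, fixing $r'\in(0,s(K))$ small enough and $p'$ any point within $\tfrac{\delta_0}{2}\ell'$ of $\Psi_\infty(0)$, the rescaled map $(\ell')^{-1}(\Psi_\infty(r'\,\cdot)-p')$ lies in $\mathcal{R}_{K'(V),\delta_0}^{\Pi'}$, with $r',\ell'$ fixed positive numbers and $\ell'<\tfrac12$.

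With this data the Dirichlet bound is easy: by conformality $\tfrac12\int_{B_{r'}^2}|\nabla\Psi'|^2=\operatorname{area}(\Psi'|_{B_{r'}^2})$, and using $\tfrac12|\nabla\Psi_k|^2\mathcal L^2\weakstarto\bar N\,|\de_1\Psi_\infty\wedge\de_2\Psi_\infty|\,\mathcal L^2$ on $\omega$ this tends (for $\Psi':=(\ell'_k)^{-1}(\Psi_k-p'_k)$) to $\bar N$ times the once-covered area of the patch $(\ell')^{-1}(\Psi_\infty(B_{r'}^2)-p')$, which by the convex hull property lies in $\bar B_{D(K'(V))+\delta_0}^q$; thus the limit is $\le V\pi(D(K'(V))+\delta_0)^2<E'(V)=2\pi K'(V)D(K'(V))^2$ (with ample room), so the bound holds for $k$ large. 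The two volume bounds for $\Psi'$ are the genuinely delicate point, and are precisely what the ``quantization trick'' of Definition~\ref{quanttrick.def} is designed to handle: for $t\in\{1,\eta(K)\}$ the quantity $\tfrac1{\pi(t\ell'_k)^2}\int_{\Psi_k^{-1}(B_{t\ell'_k}^q(p'_k))}d\operatorname{vol}_{g_{\Psi_k}}$ (which equals $\tfrac1{\pi t^2}\int_{(\Psi')^{-1}(B_t^q)}d\operatorname{vol}_{g_{\Psi'}}$) converges as $k\to\infty$ to $\tfrac1{\pi(t\ell')^2}\norm{\vfd_\infty}(B_{t\ell'}^q(p'))$; choosing $p'$ (among the admissible perturbations of $\Psi_\infty(0)$) so that its $\vfd_\infty$-preimage in $B_1^2$ is a single unbranched sheet --- possible because the self-intersection/branch locus is lower dimensional, and that preimage then lies in $B_{s(K)}^2$ with multiplicity $\bar N$ --- these ratios tend, as $r'$ (hence $\ell'$) shrinks, to the integer $\bar N$, and a comparison via the monotonicity formula at radius $\eta(K)-\epsilon_0$ against the hypothesis at radius $\eta(K)$ yields the asserted bound $\lfloor(\tfrac{\eta(K)}{\eta(K)-\epsilon_0})^2V\rfloor+\tfrac12$, the floor-plus-half form being stable under the $(\tfrac{\eta(K)}{\eta(K)-\epsilon_0})^2$ perturbation precisely because $\bar N$ is an integer $\le V=\lfloor V\rfloor+\tfrac12$.

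Finally I would transfer everything to finite $k$: no bubbling occurs in this setting (as in the proof of Theorem~\ref{blackbox}), so $\Psi_k\to\Psi_\infty$ in $C^0_{loc}(B_{s(K)}^2)$; taking $p'_k\to p'$, $\ell'_k\to\ell'$, $\Pi'_k\to\Pi'$ and invoking the strict inequalities $\epsilon_k<r'$, $\epsilon_k<\ell'_k$, $\dist(\Pi_k,\Pi'_k)<\epsilon_0$, $\ell'_k<\tfrac12$ (all valid for $k$ large), the tuple $(p'_k,r',\ell'_k,\Pi'_k)$ satisfies every required conclusion for $\Psi_k$, the desired contradiction. I expect the main obstacle to be making this last step precise --- identifying the correct $p'$ and controlling $\norm{\vfd_\infty}(B_{t\ell'}^q(p'))$ sharply enough to get the quantized bound --- together with the (standard but non-trivial) task of justifying that the finite-level maps $\Psi_k$, which need not obey the convex hull property, converge strongly enough for all of the above passages to the limit.
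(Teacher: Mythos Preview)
Your overall strategy is the same as the paper's: contradiction, pass to the limiting parametrized stationary varifold, use Lemma~\ref{inj} to see that $\Theta_\infty$ is an immersion near $\varphi_\infty(0)$, Taylor-expand to place a rescaling of $\Psi_\infty$ in $\mathcal{R}_{K'(V),\delta_0}^{\Pi'}$, and transfer to finite $k$. Two points deserve correction.

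The real gap is in the last step. You write ``no bubbling occurs \ldots, so $\Psi_k\to\Psi_\infty$ in $C^0_{loc}(B_{s(K)}^2)$''. Absence of energy concentration does not by itself upgrade weak $W^{1,2}$ convergence to local uniform convergence; the paper never claims or uses $C^0_{loc}$ convergence of $\Psi_k$. What is needed to transfer the condition $\mathcal{R}_{K'(V),\delta_0}^{\Pi'}$ is only $C^0$ convergence of the traces on the three circles $\de B_{r'}^2,\de B_{s(K)r'}^2,\de B_{s(K)^2 r'}^2$. The paper obtains this by \emph{choosing} $r'$ from the full-measure set of radii on which such trace convergence holds (a Fubini-type selection, \cite[Lemma~A.5]{pigriv}, applied to the vector-valued map $(\Psi_k,\Psi_k(s(K)\cdot),\Psi_k(s(K)^2\cdot))$). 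Without this, your transfer of the $\mathcal{R}$-condition to $\Psi_k$ is unjustified.

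Second, your handling of the two volume bounds is more involved than necessary. Once Lemma~\ref{inj} gives that $\Theta_\infty$ is a diffeomorphism on $\varphi_\infty(\bar B_{s(K)/2}^2)$, the limit varifold coming from $B_{s(K)^2}^2$ is, near $p_\infty':=\Theta_\infty(\varphi_\infty(0))$, already a single embedded sheet with constant integer density $\theta$ (constancy theorem). There is no need to perturb $p'$ to avoid self-intersections; one simply takes $p_\infty'$. The bound $\theta\le\big(\tfrac{\eta(K)}{\eta(K)-\epsilon_0}\big)^2 V$ then comes from applying monotonicity to this varifold on $B_{\eta(K)}^q$ (it is stationary there because $\Psi_\infty(\de B_{s(K)^2}^2)$ stays outside, a point you should state), evaluated at a point $\Psi_\infty(y)$ with $|\Psi_\infty(y)|\le\delta_0$ supplied by Lemma~\ref{winded}. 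Since $\theta$ is an integer, $\|\vfd_\infty\|(\bar B_t^q(p_\infty'))<(\lfloor\cdots\rfloor+\tfrac12)\pi t^2$ for small $t$, and then varifold convergence passes this strict inequality to $\Psi_k$. Your detour through ``single unbranched sheets'' and perturbing $p'$ is unnecessary and somewhat blurs which varifold (restricted to which domain ball) the monotonicity is being applied to.
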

	
	\begin{proof}[Proof of Lemma \ref{stabdist}]
		We can assume $z=0$ and $r=1$.
		By contradiction, suppose that there is a sequence $\epsilon_k\downarrow 0$ such that the claim fails (with $\epsilon_0'=\epsilon_k$)
		for all radii $\epsilon_k<r'<s(K)$, for some $\Psi_k$ and $\Pi_k$ satisfying all the hypotheses. As observed in the proof of Lemma \ref{closetoint}, up to subsequences we get a limiting local parametrized stationary varifold $(\Omega_\infty,\Theta_\infty,N_\infty\circ\varphi_\infty^{-1})$ in $\R^q$, where $\Theta_\infty=\Psi_\infty\circ\varphi_\infty^{-1}$ and $\Omega_\infty=\varphi_\infty(B_{s(K)}^2)$ for a suitable $K'(V)$-quasiconformal homeomorphism $\varphi_\infty$ of the plane. Moreover, assuming also that $\Pi_k\to\Pi_\infty$ and $p_k\to p_\infty$, by weak convergence of traces and Corollary \ref{qchomcpt} we still have $\Psi_\infty\in\mathcal{R}_{K,\delta_0}^{\Pi_\infty}$.
		By Theorem \ref{regthm}, $\Theta_\infty$ is harmonic.
		Also, it takes values in the tangent space $T$ at $p_\infty$ (translated to the origin).

		We can assume that $\varphi_\infty(0)=0$.
		By definition of $\delta_0$ and Lemma \ref{inj}, applied to $\Psi_\infty(s(K)\cdot)$ and $\varphi_\infty(s(K)\cdot)$, $\Theta_\infty$ is a diffeomorphism from $\varphi_\infty(\bar B_{s(K)/2}^2)$ onto its image and the differential $\nabla\Theta_\infty(0)$ is a conformal linear map of full rank, spanning a plane $\Pi'$ with $\dist(\Pi_\infty,\Pi')<\epsilon_0$.
		
		Since $s(K)^2\le\frac{s(K)}{2}$, the varifolds $\vfd_k$ induced by $\restr{\Psi_k}{B_{s(K)^2}^2}$ converge to $\vfd_\infty$, induced by $(\varphi_\infty(B_{s(K)^2}^2),\Theta_\infty,N_\infty\circ\varphi_\infty^{-1})$.
		Using Lemma \ref{winded}, applied to $\eta(K)^{-1}\Pi_\infty\circ\Psi_\infty(s(K)^2\cdot)$, and the fact that $\delta_0<\eta(K)$, we deduce the existence of a point $y\in B_{s(K)^2}^2$ such that $\Pi_\infty\circ\Psi_\infty(y)=0$. By the convex hull property enjoyed by $\Psi_\infty$, it follows that
		\begin{align*}
			&\abs{\Psi_\infty(y)}=\abs{\Pi_\infty^\perp\circ\Psi_\infty(y)}\le\delta_0,
		\end{align*}
		as $\Psi_\infty(\de B_{s(K)^2}^2)\subseteq\set{p:\abs{\Pi_\infty^\perp(p)}\le\delta_0}$.
		Since $\norm{\vfd_\infty}(B_{\eta(K)}^q)\le V\pi\eta(K)^2$, the stationarity of $\vfd_\infty$ on $B_{\eta(K)}^\envdim$ implies that its density at $\Psi_\infty(y)$ is at most
		\begin{align}\label{smalldens}
			&\frac{V\pi\eta(K)^2}{\pi(\eta(K)-\delta_0)^2}\le\pa{\frac{\eta(K)}{\eta(K)-\epsilon_0}}^2 V.
		\end{align}
		Being $\vfd_\infty$ stationary in the embedded surface $\Theta_\infty(\varphi_\infty(B_{s(K)^2}^2))$, the constancy theorem gives that its density $\theta$ is a constant integer here. Thus we have
		\begin{align}\label{quantwhy}
			\norm{\vfd_\infty}(\bar B_t^q(p_\infty'))<\Bigg(\Big\lfloor\pa{\frac{\eta(K)}{\eta(K)-\epsilon_0}}^2 V\Big\rfloor+\mz\Bigg) \pi t^2,\qquad p_\infty':=\Theta_\infty(0)\in T,
		\end{align}
		for all $t>0$ small enough. Fix now any $r'<s(K)$ such that we have the strong convergence $\Psi_k(r'\cdot)\to\Psi_\infty(r'\cdot)$ in $C^0(\de B_1^2\cup\de B_{s(K)}^2\cup\de B_{s(K)^2}^2)$ along a subsequence.\footnote{This can be obtained by applying e.g. \cite[Lemma~A.5]{pigriv} to the weakly converging $\R^{3\envdim}$-valued maps
		\begin{align*}
			&(\Psi_k,\Psi_k(s(K)\cdot),\Psi_k(s(K)^2\cdot))\weakto(\Psi_\infty,\Psi_\infty(s(K)\cdot),\Psi_\infty(s(K)^2\cdot)).
		\end{align*}} Note that $\lambda^{-1}\varphi_\infty(r'\cdot)\in\mathcal{D}_{K'(V)}$, where $\lambda:=\min_{\abs{x}=r'}\abs{\varphi_\infty(x)}$. Also, the fact that $\Psi_\infty=\Theta_\infty\circ\varphi_\infty$ and the smoothness of $\Theta_\infty$ give
		\begin{align}\label{firstorder}
			\abs{\Psi_\infty(r'x)-\Psi_\infty(0)-\ang{\nabla\Theta_\infty(0),\varphi_\infty(r'x)}}<\frac{\delta_0\abs{\nabla\Theta_\infty(0)}}{\sqrt{2}D(K'(V))}\abs{\varphi_\infty(r'x)}\le\delta_0\ell'
		\end{align}
		if $r'$ is chosen small enough, where $\ell':=\frac{\abs{\nabla\Theta_\infty(0)}}{\sqrt 2}\lambda$ and $x\in\bar B_1^2$. This implies
		\begin{align*}
			&(\ell')^{-1}(\Psi_k(r'\cdot)-p_\infty')\in\mathcal{R}_{K'(V),\delta_0}^{\Pi'}
		\end{align*}
		by conformality of $\nabla\Theta_\infty(0)$. Shrinking $r'$, we can also ensure that $\ell'<\mz$, as well as
		\begin{align}\label{limenergy}\begin{aligned}
			\int_{B_{r'}^2}N_\infty|\de_1\Psi_\infty\wedge\de_2\Psi_\infty|
			&\le\frac{K'(V)}{2}\int_{B_{D(K'(V))\lambda}^2}\abs{\nabla\Theta_\infty}^2 \\
			&<K'(V)(D(K'(V))\lambda)^2\pi\abs{\nabla\Theta_\infty(0)}^2.
		\end{aligned}\end{align}
		Calling $\vfd_\infty'$ the varifold induced by $(\varphi_\infty(B_{r'}^2),(\ell')^{-1}(\Theta_\infty-p_\infty'),N_\infty\circ\varphi_\infty^{-1})$,
		in view of \eqref{quantwhy} we can even guarantee that
		\begin{align*}
			\frac{\norm{\vfd_\infty'}(\bar B_1^q)}{\pi},\,\frac{\norm{\vfd_\infty'}(\bar B_{\eta(K)}^q)}{\pi\eta(K)^2}<\Big\lfloor\pa{\frac{\eta(K)}{\eta(K)-\epsilon_0}}^2 V\Big\rfloor+\mz.
		\end{align*}
		Calling $p_k'$ the closest point to $p_\infty'$ in $\subman_{p_k,\ell_k}$ (eventually defined and converging to $p_\infty'$, since $\subman_{p_k,\ell_k}\to T$), thanks to \eqref{firstorder} and $\lambda^{-1}\varphi_\infty(r'\cdot)\in\mathcal{D}_{K'(V)}$, eventually we have
		\begin{align*}
			&(\ell')^{-1}(\Psi_k(r'\cdot)-p_k')\in\mathcal{R}_{K'(V),\delta_0}^{\Pi'}.
		\end{align*}
		Moreover, \eqref{limenergy} and \eqref{blackboxmeas} give
		\begin{align*}
			\mz\int_{B_{r'}^2(z)}\abs{\nabla\Psi_k}^2\to\int_{B_{r'}^2(z)}N_\infty\abs{\de_1\Psi_\infty\wedge\de_2\Psi_\infty}<(\ell')^2 E'(V).
		\end{align*}
		From the convergence of the varifolds $\vfd_k'$ induced by $\restr{(\ell')^{-1}(\Psi_k-p_k')}{B_{r'}^2}$ to $\vfd_\infty'$ we get
		\begin{align*}
			\limsup_{k\to\infty}\frac{\norm{\vfd_k'}(B_1^q)}{\pi},\,\limsup_{k\to\infty}\frac{\norm{\vfd_k'}(B_{\eta(K)}^q)}{\pi\eta(K)^2}<\Big\lfloor\pa{\frac{\eta(K)}{\eta(K)-\epsilon_0}}^2 V\Big\rfloor+\mz.
		\end{align*}
		So eventually $(\ell')^{-1}(\Psi_k(r'\cdot)-p_k')$ satisfies all the conclusions. This yields the desired contradiction.
	\end{proof}

	\begin{definition}\label{quanttrick.def}
		Given constants $K''\ge 1$ and $E''>0$, we define $K_0:=\max\set{K'(V),K''}$ and $E_0:=\max\set{E'(V),E''}$. We also let $s_0:=s(K_0)$ and $\eta_0:=\eta(K_0)$.
		
		We fix $\epsilon_0$ (and thus $\delta_0$) and $\epsilon_0'$ so that Lemmas
		\ref{closetoint} and \ref{stabdist} apply with $K:=K_0$, $E:=E_0$. Since $\epsilon_0$ depends on $V$, we can assume that it is chosen so small that
		\begin{align}\label{quanttrick} \Big\lfloor\pa{\frac{\eta_0}{\eta_0-\epsilon_0}}^2 V\Big\rfloor+\mz=\floor{V}+\mz=V. \end{align}
		This makes the last conclusion of Lemma \ref{stabdist} match one of the hypotheses, making it possible to iterate that result. On the other hand, the constants $V$, $K''$, $E''$ (upon which all the aforementioned constants depend) will be fixed only in Section \ref{multonesec}.
	\end{definition}
	
	\begin{lemmaen}[$\bm{n(\cdot)}$ does not change from one scale to the next one]\label{smallcontrib}
		There exists a constant $0<\epsilon_0''\le\frac{(\epsilon_0')^2}{2}<\epsilon_0'$ with the following property:
		if a conformal immersion $\Psi\in C^2(\bar B_{r}^2(z),\subman_{p,\ell})$ satisfies the hypotheses of the previous lemma (with $\epsilon_0'',E_0,K_0$ in place of $\epsilon_0',E,K$), then the new point $p'$ and the new radius $r'$ provided by Lemma \ref{stabdist} satisfy
		\begin{align}\label{eq:sameint}
		n(\Psi,B_{s_0^2r}^2(z),B_{\eta_0}^\Pi)
		=n(\Psi-p',B_{s_0^2r'}^2(z),B_{\eta_0\ell'}^\Pi)
		=n(\Psi-p',B_{s_0^2r'}^2(z),B_{\eta_0\ell'}^{\Pi'}).
		\end{align}
	\end{lemmaen}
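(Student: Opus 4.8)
The plan is to argue by contradiction, recycling the compactness construction carried out inside the proof of Lemma~\ref{stabdist}. After the usual normalization $z=0$, $r=1$, suppose the conclusion fails along a sequence $\Psi_k$, $\Pi_k$ satisfying the hypotheses with $\epsilon_k\downarrow 0$ in place of $\epsilon_0''$. Passing to a subsequence, I would extract exactly the limiting objects produced in the proof of Lemma~\ref{stabdist}: a weak $W^{1,2}$-limit $\Psi_\infty$ with $\Pi_k\to\Pi_\infty$, a $K_0$-quasiconformal homeomorphism $\varphi_\infty$ (with $\varphi_\infty(0)=0$) such that $\Theta_\infty:=\Psi_\infty\circ\varphi_\infty^{-1}$ is harmonic and weakly conformal (Theorems~\ref{blackbox} and~\ref{regthm}), a fixed small radius $r'$ along which $\Psi_k(r'\cdot)\to\Psi_\infty(r'\cdot)$ in $C^0(\de B_1^2\cup\de B_{s_0}^2\cup\de B_{s_0^2}^2)$, the scale $\ell':=\tfrac{1}{\sqrt2}|\nabla\Theta_\infty(0)|\min_{|x|=r'}|\varphi_\infty(x)|$, the plane $\Pi'$ spanned by $\nabla\Theta_\infty(0)$ (with $\dist(\Pi_\infty,\Pi')<\epsilon_0$), the point $p_\infty':=\Theta_\infty(0)=\Psi_\infty(0)$, and the first-order estimate \eqref{firstorder}: the map $(\ell')^{-1}(\Psi_\infty(r'\cdot)-p_\infty')$ is $\delta_0$-close \emph{on all of $\bar B_1^2$} to some $\chi\in\mathcal{D}_{K'(V)}^{\Pi'}\subseteq\mathcal{D}_{K_0}^{\Pi'}$. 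Letting $p_k'$ be the closest point of $\subman_{p_k,\ell_k}$ to $p_\infty'$, set $\Psi_k':=(\ell')^{-1}(\Psi_k-p_k')$ on $B_{r'}^2$; this is critical for \eqref{rescfunc} with the scale $\ell_k$ replaced by $\ell'\ell_k$ and $\tau_k$ by $\tau_k(\ell')^{-2}$, both $\to 0$, it inherits the entropy-type smallness (since $\log((\tau_k(\ell')^{-2})^{-1})\sim\log(\tau_k^{-1})$), it satisfies $\Psi_k'\weakto\Psi_\infty':=(\ell')^{-1}(\Psi_\infty-p_\infty')$, and eventually $\Psi_k'(r'\cdot)\in\mathcal{R}_{K_0,\delta_0}^{\Pi'}$ -- here the whole-ball closeness in \eqref{firstorder} is exactly what converts the reference radii $s(K'(V))$ appearing in Lemma~\ref{stabdist} into the radii $s_0=s(K_0)$ demanded by \eqref{eq:sameint}.

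Next I would record that the limiting surfaces are graphs. By Lemma~\ref{inj} applied to $\Psi_\infty(s_0\cdot)$ and the associated ($K_0$-)quasiconformal homeomorphism, $\Theta_\infty$ is a diffeomorphism of $\varphi_\infty(\bar B_{s_0/2}^2)$ onto its image with all tangent planes within $\epsilon_0$ of $\Pi_\infty$. Since $s_0\le\tfrac12$ we have $B_{s_0^2}^2$ and $B_{s_0^2r'}^2$ contained in $B_{s_0/2}^2$; hence both $S:=\Psi_\infty(B_{s_0^2}^2)$ and $S':=(\ell')^{-1}(\Psi_\infty(B_{s_0^2r'}^2)-p_\infty')$ are embedded, nearly flat minimal surfaces, each a small-slope graph over a region of $\Pi_\infty$ -- and therefore also over $\Pi'$, since $\dist(\Pi_\infty,\Pi')<\epsilon_0$.

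The conceptual core is that the limiting multiplicity is invariant under the affine rescaling $\Psi_\infty\mapsto\Psi_\infty'$. By \eqref{blackboxmeas}, $\tfrac12|\nabla\Psi_k|^2\mathcal{L}^2\weakstarto\nu\,|\de_1\Psi_\infty\wedge\de_2\Psi_\infty|\mathcal{L}^2$ locally on $B_{r'}^2$, where $\nu:=N_\infty\in\Z^+$ is \emph{constant} because $\Psi_\infty|_{B_{r'}^2}$ is non-constant (Theorem~\ref{regthm}). Dividing by $(\ell')^2$ gives $\tfrac12|\nabla\Psi_k'|^2\mathcal{L}^2\weakstarto\nu\,|\de_1\Psi_\infty'\wedge\de_2\Psi_\infty'|\mathcal{L}^2$, so (using also that $\Psi_\infty'\circ\varphi_\infty^{-1}=(\ell')^{-1}(\Theta_\infty-p_\infty')$ is already conformal) Theorem~\ref{blackbox} yields that the varifolds induced by $\restr{\Psi_k'}{B_{s_0^2r'}^2}$ converge to a (local) parametrized stationary varifold supported on $S'$ with the \emph{same} constant multiplicity $\nu$; likewise those induced by $\restr{\Psi_k}{B_{s_0^2}^2}$ converge to the one supported on $S$ with multiplicity $\nu$.

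Finally, for each of the three quantities in \eqref{eq:sameint} I would run the endgame of the proof of Lemma~\ref{closetoint}. Using $\delta_0<\epsilon_0<\eta_0$, the definition of $\eta_0$, and -- for the last two quantities -- that $\Psi_\infty'(r'\cdot)$ is $\delta_0$-close to $\chi\in\mathcal{D}_{K_0}^{\Pi'}$ on the circle $\de B_{s_0^2}^2$, the image of the relevant boundary circle ($\de B_{s_0^2}^2$ for the first quantity, $\de B_{s_0^2r'}^2$ for the others) under the corresponding limiting map stays at distance $\ge 3\eta_0$ from the origin, so the limiting varifold is ``honest'' over $B_{\eta_0}^{\Pi_\infty}$, resp.\ $B_{\eta_0}^{\Pi'}$; Lemma~\ref{winded} shows its support projects onto a superset of that ball; and since the support is a small-slope graph, the area formula gives $\norm{(\Pi_\infty)_*\vfd}(B_{\eta_0}^{\Pi_\infty})=\nu\pi\eta_0^2$ (resp.\ with $\Pi'$). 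Together with the area-formula identity $\media_{B_{\eta_0}^{\Pi_k}}N(\Psi_k,\cdot,\Pi_k)=\norm{(\Pi_k)_*\vfd_k}(B_{\eta_0}^{\Pi_k})/(\pi\eta_0^2)$ and the tightness of the mass measures from Theorem~\ref{blackbox}, this forces
\begin{align*}
&\media_{B_{\eta_0}^{\Pi_k}}N(\Psi_k,B_{s_0^2}^2,\Pi_k)\to\nu,\qquad \media_{B_{\eta_0}^{\Pi_k}}N(\Psi_k',B_{s_0^2r'}^2,\Pi_k)\to\nu,\\
&\media_{B_{\eta_0}^{\Pi'}}N(\Psi_k',B_{s_0^2r'}^2,\Pi')\to\nu,
\end{align*}
and hence, via the scaling identity for $n(\cdot)$ recalled in Section~\ref{notsec}, \eqref{eq:sameint} holds for all large $k$, the desired contradiction. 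The step I expect to be the real obstacle is not any single estimate but the bookkeeping: checking at the new scale $r'$ all the hypotheses feeding Theorem~\ref{blackbox}, Lemma~\ref{inj} and Lemma~\ref{winded}, rendering harmless the mismatch between the radii $s(K'(V))$ coming out of Lemma~\ref{stabdist} and the radii $s_0,\eta_0$ appearing in \eqref{eq:sameint} (precisely what the whole-ball closeness \eqref{firstorder}, stronger than the three-circle closeness stated in Lemma~\ref{stabdist}, is for), and carefully tracking the three planes $\Pi_k$, $\Pi_\infty$, $\Pi'$ under the various projections.
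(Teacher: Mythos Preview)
Your argument is substantially correct and close in spirit to the paper's, but there is a subtle mismatch in how you set up the contradiction. The lemma asserts that \eqref{eq:sameint} holds for whatever outputs $p',r',\ell',\Pi'$ Lemma~\ref{stabdist} hands you. Hence in the contradiction setup, for each $k$ you are \emph{given} outputs $r_k',p_k',\ell_k',\Pi_k'$ (satisfying all the conclusions of Lemma~\ref{stabdist}) for which \eqref{eq:sameint} supposedly fails. Instead of working with these, you re-open the proof of Lemma~\ref{stabdist} and manufacture your own fixed $r',\ell',\Pi',p_\infty'$ from the limiting objects; showing that \eqref{eq:sameint} holds for \emph{those} values is not a contradiction, since the given $r_k'$ might be different. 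What you actually prove is the weaker statement ``there exist outputs of Lemma~\ref{stabdist} for which \eqref{eq:sameint} holds'' --- enough for the application in Theorem~\ref{iteration}, but not the lemma as written.

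The paper avoids this by treating $r_k',p_k',\ell_k',\Pi_k'$ as given data and, using the bounds $\epsilon_0'<r_k'<s_0$ and $\epsilon_0'<\ell_k'<\tfrac12$ from the conclusions of Lemma~\ref{stabdist}, passing to a subsequence with $r_k'\to r_\infty'\in[\epsilon_0',s_0]$, $\ell_k'\to\ell_\infty'\in[\epsilon_0',\tfrac12]$, $p_k'\to p_\infty'$. Since $s_0^2 r_\infty'\le s_0^2\le s_0/2$, the varying domains $B_{s_0^2 r_k'}^2$ still sit inside the region where Lemma~\ref{inj} already gives the diffeomorphism property of $\Pi_\infty\circ\Theta_\infty$, and your varifold-convergence ``endgame'' goes through verbatim (with domains depending on $k$, approximated by fixed ones). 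The paper also dispatches the second equality of \eqref{eq:sameint} at the very start by a direct appeal to Lemma~\ref{closetoint} (using $\dist(\Pi,\Pi')<\epsilon_0$), and only argues the first by contradiction. With this setup, your concern about converting the radii $s(K'(V))$ to $s_0$ via the whole-ball closeness \eqref{firstorder} disappears: you never need to reconstruct the three-circle property at the new scale, only to know that $r_\infty'>0$.
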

	
	\begin{proof}[Proof of Lemma \ref{smallcontrib}]
		The second equality in \eqref{eq:sameint} follows immediately from Lemma \ref{closetoint} (applied with $(\ell')^{-1}(\Psi-p')$ on $B_{r'}^2$), which gives
		\begin{align*}
		&n(\Psi-p',B_{s_0^2r'}^2,B_{\eta_0\ell'}^\Pi)=n(\Psi-p',B_{s_0^2r'}^2,B_{\eta_0\ell'}^{\Pi'})
		\end{align*}
		since $\dist(\Pi',\Pi)<\epsilon_0$.
		
		Assume again $z=0$, $r=1$ and, by contradiction, that the first equality in \eqref{eq:sameint} fails, so that we have again two sequences $\epsilon_k\downarrow 0$ and $\Psi_k$. We can assume that $\Pi_k\to\Pi_\infty$, $p_k'\to p_\infty'$, $\ell_k'\to\ell_\infty'$ and $r_k'\to r_\infty'$, with $p_\infty'\in\subman$, $\epsilon_0'\le\ell_\infty'\le\frac{1}{2}$ and $\epsilon_0'\le r_\infty'\le s_0$. Moreover, as in the proof of Lemma \ref{stabdist}, up to further subsequences we get a limiting local parametrized stationary varifold  $(\Omega_\infty,\Theta_\infty,N_\infty\circ\varphi_\infty^{-1})$ in $\R^q$, with $\Omega_\infty=\varphi_\infty(B_{s_0}^2)$. From Theorem \ref{regthm} we know that $\Theta_\infty$ is harmonic and $N_\infty=\nu$ is constant, so Lemma \ref{inj} gives that $\Pi_\infty\circ\Theta_\infty$ is a diffeomorphism from $\varphi_\infty(\bar B_{s_0/2}^2)$ onto its image.
		
		Calling $\vfd_k$ the varifold issued by $\restr{\Psi_k}{B_{s_0^2}^2}$ and $\vfd_\infty$ the one issued by $(\varphi_\infty(B_{s_0^2}^2),\Theta_\infty,\nu)$, we have the varifold convergence $\vfd_k\weakstarto\vfd_\infty$ as $k\to\infty$. The area formula gives
		\begin{align*}
			\media_{B_{\eta_0}^{\Pi_k}}N(\Psi_k,B_{s_0^2}^2,\Pi_k)=\frac{\norm{(\Pi_k)_*\vfd_k}(B_{\eta_0}^{\Pi_k})}{\pi\eta_0^2}\to\frac{\norm{(\Pi_\infty)_*\vfd_\infty}(B_{\eta_0}^{\Pi_\infty})}{\pi\eta_0^2}=\nu,
		\end{align*}
		since $(\Pi_\infty)_*\vfd_\infty$ equals an open superset of $B_{\eta_0}^{\Pi_\infty}$ in $\Pi_\infty$ (by Lemma \ref{winded}), equipped with the constant integer multiplicity $\nu$. Hence, $n(\Psi_k,B_{s_0^2}^2,B_{\eta_0}^{\Pi_k})=\nu$ eventually.
		
		Similarly, calling $\vfd_k'$ the varifold induced by $\restr{\Psi_k}{B_{s_0^2r_k'}^2}$ and $\vfd_\infty'$ the varifold induced by $(\varphi_\infty(B_{s_0^2r_\infty'}^2),\Theta_\infty,\nu)$, we have $\vfd_k'\weakstarto\vfd_\infty'$ as $k\to\infty$, as is readily seen by approximating with domains which do not vary along the sequence. Since $(\ell_\infty')^{-1}(\Psi_\infty(r_\infty'\cdot)-p_\infty')\in\mathcal{R}_{K_0,\delta_0}^{\Pi_\infty}$, again $(\Pi_\infty)_*\vfd_\infty'$ equals a superset of $B_{\eta_0\ell_\infty'}^{\Pi_\infty}$ in $\Pi_\infty$, with constant density $\nu$. This gives again
		\begin{align*}
			\media_{B_{\eta_0\ell_k'}^{\Pi_k}(q_k)}N(\Psi_k,B_{s_0^2r_k'}^2,\Pi_k)=\frac{\norm{(\Pi_k)_*\vfd_k'}(B_{\eta_0\ell_k'}^{\Pi_k}(q_k))}{\pi\eta_0^2(\ell_k')^2}\to\frac{\norm{(\Pi_\infty)_*\vfd_\infty'}(B_{\eta_0\ell_\infty'}^{\Pi_\infty}(q_\infty))}{\pi\eta_0^2(\ell_\infty')^2}=\nu,
		\end{align*}
		where $q_k:=\Pi_k(p_k')$ for $k\in\N\cup\set{\infty}$. Hence, $n(\Psi_k-p_k',B_{s_0^2r_k'}^2,B_{\eta_0\ell_k'}^{\Pi_k})=\nu$ eventually.
		So the first equality in \eqref{eq:sameint} holds eventually, giving the desired contradiction.
	\end{proof}

	\begin{lemmaen}[$\bm{n(\cdot)=1}$ when $\bm{\tau=1}$]\label{sqrtsigma}
		Assume that $\Psi\in C^\infty(\bar B_{r}^2(z),\subman_{p,\ell})$ is a conformal immersion and $\Pi$ is a 2-plane with $\Psi(z+r\cdot)\in\mathcal{R}_{K_0,\delta_0}^\Pi$ and $\mz\int_{B_r^2(z)}\abs{\nabla\Psi}^2\le E$. If $\int_{B_1^2}\abs{A}^4\,d\operatorname{vol}_{g_\Psi}$ and $\ell$ are sufficiently small, then $\Pi\circ\Psi$ is a diffeomorphism from $\bar B_{s_0^2r}^2(z)$ onto its image.
	\end{lemmaen}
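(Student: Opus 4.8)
The plan is to argue by contradiction and reduce, in the limit, to Lemma~\ref{inj}. After rescaling we may assume $z=0$ and $r=1$, and suppose the statement fails: then there are conformal immersions $\Psi_k\in C^\infty(\bar B_1^2,\subman_{p_k,\ell_k})$ and $2$-planes $\Pi_k$ with $\Psi_k\in\mathcal{R}_{K_0,\delta_0}^{\Pi_k}$ and $\mz\int_{B_1^2}\abs{\nabla\Psi_k}^2\le E$, such that $\int_{B_1^2}\abs{A}^4\,d\operatorname{vol}_{g_{\Psi_k}}\to 0$ and $\ell_k\to 0$, but $\Pi_k\circ\Psi_k$ is \emph{not} a diffeomorphism of $\bar B_{s_0^2}^2$ onto its image. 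Writing $g_{\Psi_k}=e^{2\lambda_k}\delta$ for the conformal factor, the heart of the proof is a \textbf{uniform a~priori upper bound} $\lambda_k\le C(E)$ on a fixed interior ball, say $B_{3/4}^2$.

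This bound comes from integrability by compensation. By H\"older's inequality and conformality,
\[ \int_{B_1^2}\abs{A}^2\,d\operatorname{vol}_{g_{\Psi_k}}\le\pa{\int_{B_1^2}\abs{A}^4\,d\operatorname{vol}_{g_{\Psi_k}}}^{1/2}\pa{\int_{B_1^2}d\operatorname{vol}_{g_{\Psi_k}}}^{1/2}\le E^{1/2}\pa{\int_{B_1^2}\abs{A}^4\,d\operatorname{vol}_{g_{\Psi_k}}}^{1/2}\to 0, \]
and after adding the $O(\ell_k^2)$ term coming from the second fundamental form of $\subman_{p_k,\ell_k}$ in $\R^q$, the Dirichlet energy of the Gauss map of the immersion $\Psi_k$ into $\R^q$ also tends to $0$. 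Hence, for $k$ large, this energy lies below the threshold of H\'elein's moving-frame lemma, which supplies a Coulomb frame $(e_1^k,e_2^k)$ for the tangent plane of $\Psi_k$ on $B_1^2$ with $\norm{\nabla e_i^k}_{L^2(B_1^2)}\to 0$ and the Liouville-type identity $-\Delta\lambda_k=\de_1e_1^k\cdot\de_2e_2^k-\de_2e_1^k\cdot\de_1e_2^k$. The right-hand side is a sum of Jacobian determinants of scalar functions, so letting $\mu_k$ solve the same equation on $B_{7/8}^2$ with zero boundary data, Wente's inequality gives $\norm{\mu_k}_{L^\infty(B_{7/8}^2)}\lesssim\norm{\nabla e_1^k}_{L^2}\norm{\nabla e_2^k}_{L^2}\to 0$. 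Then $h_k:=\lambda_k-\mu_k$ is harmonic on $B_{7/8}^2$, so $e^{2h_k}$ is subharmonic; since $\int_{B_{7/8}^2}e^{2h_k}\lesssim\int_{B_1^2}e^{2\lambda_k}=\mz\int_{B_1^2}\abs{\nabla\Psi_k}^2\le E$, the sub-mean-value inequality yields $\sup_{B_{3/4}^2}e^{2h_k}\lesssim E$, whence $\lambda_k\le C(E)$ on $B_{3/4}^2$. (One also obtains a matching lower bound on a smaller ball, by applying Harnack's inequality to the nonnegative harmonic function $C(E)-h_k$ and using that $\Pi_k\circ\Psi_k(B_{s_0^2}^2)$ contains a disk of radius $\gtrsim\eta_0$ by Lemma~\ref{winded}, forcing $\int_{B_{s_0^2}^2}e^{2\lambda_k}\gtrsim\eta_0^2$; this refinement will not be needed.)

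With the upper bound in hand, the conclusion follows by compactness. Since $\Psi_k$ is conformal, $\Delta\Psi_k$ is everywhere orthogonal to the tangent plane of $\Psi_k$ and equals $2e^{2\lambda_k}H_k$, where $H_k$ is the mean curvature vector of $\Psi_k$ in $\R^q$; bounding $\abs{H_k}\lesssim\abs{A}+O(\ell_k)$ (the $O(\ell_k)$ again absorbing the curvature of $\subman_{p_k,\ell_k}$) and using $\lambda_k\le C(E)$ gives $\norm{\Delta\Psi_k}_{L^4(B_{3/4}^2)}^4\lesssim e^{CE}\int_{B_1^2}\abs{A}^4\,d\operatorname{vol}_{g_{\Psi_k}}+O(\ell_k^4)\to 0$. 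Standard elliptic estimates, together with the uniform $W^{1,2}(B_1^2)$-bound (the boundary control contained in $\mathcal{R}_{K_0,\delta_0}^{\Pi_k}$ fixing the otherwise free additive constant), show that $\Psi_k$ is bounded in $W^{2,4}_{loc}(B_{3/4}^2)\inject C^1_{loc}(B_{3/4}^2)$, so up to a subsequence $\Psi_k\to\Psi_\infty$ in $C^1_{loc}(B_{3/4}^2)$ and weakly in $W^{1,2}(B_1^2)$, with $\Psi_\infty$ weakly conformal and harmonic. By Corollary~\ref{qchomcpt} we may also assume $\Pi_k\to\Pi_\infty$ and that the maps in $\mathcal{D}_{K_0}^{\Pi_k}$ approximating $\Psi_k$ converge to some $\psi_\infty\in\mathcal{D}_{K_0}^{\Pi_\infty}$; passing to the limit in the three-circle estimates (uniform convergence on $\de B_{s_0}^2$ and $\de B_{s_0^2}^2$ from the $C^1_{loc}$-convergence, strong $L^2$-convergence of traces on $\de B_1^2$) yields $\Psi_\infty\in\mathcal{R}_{K_0,\delta_0}^{\Pi_\infty}$. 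Now apply Lemma~\ref{inj} to the rescaled map $\Psi_\infty(s_0\cdot)$ with $\varphi:=\mathrm{id}$ — legitimate because $\mathrm{id}$ is $K_0$-quasiconformal and $\Psi_\infty(s_0\cdot)$ is weakly conformal and harmonic — which gives that $\Pi_\infty\circ\Psi_\infty$ is a diffeomorphism from $\bar B_{s_0/2}^2$ onto its image. As $s_0\le\mz$, this ball contains $\bar B_{s_0^2}^2$, and the $C^1_{loc}$-convergence $\Pi_k\circ\Psi_k\to\Pi_\infty\circ\Psi_\infty$ forces $\Pi_k\circ\Psi_k$ to be an injective immersion (hence a diffeomorphism onto its image) on $\bar B_{s_0^2}^2$ for $k$ large, contradicting the choice of the sequence.

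\textbf{The main obstacle} is the uniform upper bound on the conformal factor $\lambda_k$: only integrability by compensation — H\'elein's Coulomb frame together with Wente's inequality — makes it accessible, and it is essential that $\int\abs{A}^2\,d\operatorname{vol}_{g_{\Psi_k}}$, not merely $\int\abs{A}^4\,d\operatorname{vol}_{g_{\Psi_k}}$, be small so that H\'elein's lemma applies; this smallness is exactly what the a~priori energy bound $E$ yields via H\"older's inequality. Once the bound is in place, extracting a conformal harmonic limit and reducing to Lemma~\ref{inj} are routine, the one delicate point being the persistence of the $\mathcal{R}_{K_0,\delta_0}$ constraint in the limit, which follows from the compactness of $\mathcal{D}_{K_0}$ in Corollary~\ref{qchomcpt}.
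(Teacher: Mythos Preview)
Your proof is correct and follows essentially the same route as the paper: a contradiction argument, H\'elein's Coulomb frame combined with Wente's inequality to obtain the uniform upper bound on the conformal factor, then $W^{2,4}$ elliptic estimates and the compact embedding into $C^1$ to extract a weakly conformal harmonic limit, and finally Lemma~\ref{inj} applied to $\Psi_\infty(s_0\cdot)$ with $\varphi=\mathrm{id}$. The only cosmetic differences are your use of the sub-mean-value property of $e^{2h_k}$ (versus the paper's slightly different bound $\lambda<e^{2\lambda}$ followed by the mean value of the harmonic $\lambda-\mu$), and your unnecessary verification of the full $\mathcal{R}_{K_0,\delta_0}^{\Pi_\infty}$ constraint on $\de B_1^2$---only the $\de B_{s_0}^2$ closeness is needed for Lemma~\ref{inj}, and that follows directly from the $C^1_{loc}$ convergence.
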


	\begin{proof}[Proof of Lemma \ref{sqrtsigma}]
		We can suppose $z=0$, $r=1$.
		Assume by contradiction that the claim does not hold, for a sequence of 2-planes $\Pi_k\to\Pi_\infty$ and immersions $\Psi_k:\bar B_1^2\to\subman_{p_k,\ell_k}$ with $\ell_k\to 0$ and second fundamental forms $A_k$ satisfying
		\begin{align} \int_{B_1^2}\abs{A_k}^4\,d\operatorname{vol}_{g_{\Psi_k}}\to 0. \end{align}
		Let $\lambda_k\in C^\infty(\bar B_1^2)$ be defined by $\abs{\de_1\Psi_k}=\abs{\de_2\Psi_k}=:e^{\lambda_k}$ and let $A_{p,\ell}$ and $\tilde A_k$ denote the second fundamental forms of $\subman_{p,\ell}\subseteq\R^q$ and of the immersion $\Psi_k$ in $\R^q$ respectively, so that $\tilde A_k=A_{p_k,\ell_k}+A_k$. Note that
		\begin{align} \norm{A_{p_k,\ell_k}}_{L^\infty}\le C(\subman)\ell_k\to 0, \end{align}
		so that
		\begin{align} \int_{B_1^2}\abs{\tilde A_k}^4\,d\operatorname{vol}_{g_{\Psi_k}}\to 0. \end{align}
		With a slight abuse of notation, let us drop the dependence on $k$ in the subsequent computations. We define the orthonormal frame
		\begin{align} \e_1:=e^{-\lambda}\de_1\Psi,\qquad\e_2:=e^{-\lambda}\de_2\Psi \end{align}
		for the tangent space of the immersed surface $\Psi$.
		It is straightforward to check that the map $\tilde e_1\wedge \tilde e_2:\bar B_1^2\to\Lambda_2\R^q$ has $\abs{\nabla(\tilde e_1\wedge \tilde e_2)}=e^\lambda\abs{\tilde A}$, so
		\begin{align} \int_{B_1^2}\abs{\nabla(\tilde e_1\wedge \tilde e_2)}^2\,d\mathcal{L}^2=\int_{B_1^2}e^{2\lambda}\abs{\tilde A}^2\,d\mathcal{L}^2=\int_{B_1^2}\abs{\tilde A}^2\,d\operatorname{vol}_{g_\Psi}\to 0 \end{align}
		by H\"older's inequality, since $\int_{B_1^2}\,d\operatorname{vol}_{g_\Psi}\le E$.
		We identify the Grassmannian $\text{Gr}(2,\R^q)$ of 2-planes in $\R^q$ with a submanifold of the projectivization of $\Lambda_2\R^q$, by means of Pl\"ucker's embedding. For $k$ large enough \cite[Lemma~5.1.4]{helein} applies and provides a rotated frame $(e_1,e_2)$, given by
		\begin{align} e_\C:=e_1+ie_2=e^{i\theta} \tilde e_\C,\qquad\tilde e_\C:=\e_1+i\e_2, \end{align}
		for a suitable real function $\theta\in W^{1,2}(B_1^2)$ minimizing $\int_{B_1^2}\abs{\nabla\theta+\e_1\cdot\nabla\e_2}^2$ (in particular, $\theta$ and $e_\C$ are smooth functions on $\bar B_1^2$) and with $\norm{\nabla e_\C}_{L^2}^2$ becoming arbitrarily small as $k\to\infty$. We will assume in the sequel that $\norm{\nabla e_\C}_{L^2}^2\le 1$.
		Observe that, whenever $\alpha,\beta\in C^1(\bar B_1^2)$,
		\begin{align*} \de_1\alpha\de_2\beta-\de_2\alpha\de_1\beta&=\frac{1}{4}(\de_1\alpha+\de_2\beta)^2+\frac{1}{4}(\de_2\alpha-\de_1\beta)^2-\frac{1}{4}(\de_1\alpha-\de_2\beta)^2-\frac{1}{4}(\de_2\alpha+\de_1\beta)^2 \\
		&=\abs{\de_z(\alpha+i\beta)}^2-\abs{\de_{\bar z}(\alpha+i\beta)}^2. \end{align*}
		Hence, being $\e_1+i\e_2=2e^{-\lambda}\de_{\bar z}\Psi$ and $\de_z\Psi\cdot\de_z\Psi=\de_{\bar z}\Psi\cdot\de_{\bar z}\Psi=0$ by conformality, we get
		\begin{align*} &-(\de_1\e_1\cdot\de_2\e_2-\de_2\e_1\cdot\de_1\e_2) \\
		&=4\de_{\bar z}(e^{-\lambda}\de_{\bar z}\Psi)\cdot\de_z(e^{-\lambda}\de_z\Psi)-4\de_{\bar z}(e^{-\lambda}\de_z\Psi)\cdot\de_z(e^{-\lambda}\de_{\bar z}\Psi) \\
		&=4e^{-2\lambda}(\de_{\bar z\bar z}^2\Psi\cdot\de_{zz}^2\Psi-\de_{\bar zz}^2\Psi\cdot\de_{\bar zz}^2\Psi-\de_{\bar z}\lambda\de_{\bar z}\Psi\cdot\de_{zz}^2\Psi-\de_z\lambda\de_z\Psi\cdot\de_{\bar z\bar z}^2\Psi) \\
		&\quad+2e^{-2\lambda}\de_{\bar z}\lambda\de_{\bar z}(\de_z\Psi\cdot\de_z\Psi)+2e^{-2\lambda}\de_z\lambda\de_z(\de_{\bar z}\Psi\cdot\de_{\bar z}\Psi) \\
		&=4e^{-2\lambda}(\de_{\bar z\bar z}^2\Psi\cdot\de_{zz}^2\Psi-\de_{\bar zz}^2\Psi\cdot\de_{\bar zz}^2\Psi-\de_{\bar z}\lambda\de_{\bar z}\Psi\cdot\de_{zz}^2\Psi-\de_z\lambda\de_z\Psi\cdot\de_{\bar z\bar z}^2\Psi). \end{align*}
		On the other hand we have
		\begin{align*} 2e^{2\lambda}\de_z\lambda=\de_z(e^{2\lambda})=\de_z(2\de_{\bar z}\Psi\cdot\de_z\Psi)=\de_{\bar z}(\de_z\Psi\cdot\de_z\Psi)+2\de_{\bar z}\Psi\cdot\de_{zz}^2\Psi=2\de_{\bar z}\Psi\cdot\de_{zz}^2\Psi, \end{align*}
		\begin{align*} \Delta(e^{2\lambda})&=4\de_{\bar zz}^2(2\de_{\bar z}\Psi\cdot\de_z\Psi)=8\de_{\bar z}(\de_{\bar z}\Psi\cdot\de_{zz}^2\Psi)+4\de_{\bar z\bar z}^2(\de_z\Psi\cdot\de_z\Psi) \\
		&=8(\de_{\bar z\bar z}^2\Psi\cdot\de_{zz}^2\Psi-\de_{\bar zz}^2\Psi\cdot\de_{\bar zz}^2\Psi)+4\de_{zz}^2(\de_{\bar z}\Psi\cdot\de_{\bar z}\Psi) \\
		&=8(\de_{\bar z\bar z}^2\Psi\cdot\de_{zz}^2\Psi-\de_{\bar zz}^2\Psi\cdot\de_{\bar zz}^2\Psi), \end{align*}
		so we arrive at
		\begin{align}\label{deltalambda} \de_1\e_1\cdot\de_2\e_2-\de_2\e_1\cdot\de_1\e_2
		=-\frac{\Delta(e^{2\lambda})}{2e^{2\lambda}}+8\de_{\bar z}\lambda\de_z\lambda
		=-\Delta\lambda. \end{align}
		Alternatively, since the projections of $\de_j\e_1$ and $\de_k\e_2$ onto the tangent space of the immersion $\Psi$ are orthogonal (being the projection of $\de_j\e_1$ a multiple of $\e_2$ and the projection of $\de_k\e_2$ a multiple of $\e_1$), we have
		\begin{align*} \de_1\e_1\cdot\de_2\e_2-\de_2\e_1\cdot\de_1\e_2=e^{2\lambda}(\tilde A(\e_1,\e_1)\cdot\tilde A(\e_2,\e_2)-\tilde A(\e_1,\e_2)\cdot\tilde A(\e_1,\e_2))=e^{2\lambda}K, \end{align*}
		by Gauss' formula, $K$ denoting the Gaussian curvature of the immersed surface. But, by the well-known formula for the curvature of a conformal metric, we have $K=-e^{-2\lambda}\Delta\lambda$, which gives again \eqref{deltalambda}. Moreover,
		\begin{align*} \de_1 e_1\cdot\de_2 e_2-\de_2 e_1\cdot\de_1 e_2
		&=\Im\ang{\nabla\bar{e_\C};\nabla e_\C}
		=\Im\ang{\nabla\bar{\tilde e_\C}-i\bar{\tilde e_\C}\otimes\nabla\theta;\nabla\tilde e_\C+i\tilde e_\C\otimes\nabla\theta} \\
		&=\Im\ang{\nabla\bar{\tilde e_\C};\nabla\tilde e_\C}
		=\de_1\e_1\cdot\de_2\e_2-\de_2\e_1\cdot\de_1\e_2, \end{align*}
		since $\ang{\bar{\tilde e_\C}\otimes\nabla\theta;\tilde e_\C\otimes\nabla\theta}$ is real and $\ang{-i\bar{\tilde e_\C}\otimes\nabla\theta;\nabla\tilde e_\C}=\bar{\ang{\nabla\bar{\tilde e_\C};i\tilde e_\C\otimes\nabla\theta}}$.
		Thus, calling $\mu\in C^\infty(\bar B_1^2)$ the solution to
		\begin{align*} \begin{cases}-\Delta\mu=\de_1 e_1\cdot\de_2 e_2-\de_2 e_1\cdot\de_1 e_2 & \text{on }B_1^2 \\ \ \mu=0 & \text{on }\de B_1^2, \end{cases} \end{align*}
		we obtain that $\lambda-\mu$ is harmonic and, by Wente's inequality,
		\begin{align}\label{wente} \norm{\mu}_{L^\infty}\le C(q)\pa{\norm{\nabla e_1}_{L^2}^2+\norm{\nabla e_2}_{L^2}^2}\le C(q). \end{align}
		Since $\lambda<e^{2\lambda}$, for all $x\in\bar B_{3/4}^2$ we get
		\begin{align}\label{upperbd} (\lambda-\mu)(x)=\media_{B_{1/4}^2(x)}(\lambda-\mu)\le\media_{B_{1/4}^2(x)}e^{2\lambda}+\norm{\mu}_{L^\infty}\le\frac{E}{\mathcal{L}^2(B_{1/4}^2)}+C(q). \end{align}
		Together with \eqref{wente}, this gives an upper bound for $\lambda$ on $B_{3/4}^2$, depending only on $E,q$. Although this is sufficient for the present purposes, one can also get a lower bound for $\lambda$ on $B_{s_0}^2$.
		Indeed, calling $M$ the right-hand side of \eqref{upperbd}, we obtain that $M-(\lambda-\mu)$ is a nonnegative harmonic function on $B_{3/4}^2$.
		Moreover, the length of the curve $\restr{\Psi}{\de B_{s_0}^2}$ is
		\begin{align} \int_{\de B_{s_0}^2}e^{\lambda}\ge 2\pi\eta_0, \end{align}
		since the composition of $\Pi\circ\restr{\Psi}{\de B_{s_0}^2}$ with the radial projection onto $\de B_{\eta_0}^\Pi$ (which does not increase the length) is surjective (being a generator of the fundamental group of $\de B_{\eta_0}^\Pi$). Hence, there exists some $x\in\de B_{s_0}^2$ such that $\lambda(x)\ge\log\pa{s_0^{-1}\eta_0}$. We deduce that
		\begin{align} \inf_{B_{s_0}^2}(M-(\lambda-\mu))\le M+C(q)-\log (s_0^{-1}\eta_0) \end{align}
		and so, by Harnack's inequality, the supremum of $M-(\lambda-\mu)$ on $B_{s_0}^2$ is bounded by a constant depending only on $E,s_0,\eta_0,q$.
		This, together with \eqref{upperbd} and \eqref{wente}, gives
		\begin{align} \norm{\lambda}_{L^\infty(B_{s_0}^2)}\le C(E,s_0,\eta_0,q). \end{align}
		The mean curvature of the immersion $\Psi$ is $\tilde H=\frac{1}{2e^{2\lambda}}(\tilde A(\de_1\Psi,\de_1\Psi)+\tilde A(\de_2\Psi,\de_2\Psi))=-\frac{\Delta\Psi}{2e^{2\lambda}}$ (note that $\Delta\Psi$ is already orthogonal to the tangent space of the immersion, since $\de_z\Psi\cdot\Delta\Psi=4\de_z\Psi\cdot\de_{\bar zz}^2\Psi=2\de_{\bar z}(\de_z\Psi\cdot\de_z\Psi)=0$). So we get
		\begin{align}\label{vanishing4}\begin{aligned} &\int_{B_{3/4}^2}\abs{\Delta\Psi_k}^4\,d\mathcal{L}^2=16\int_{B_{3/4}^2}\abs{\tilde H_k}^4 e^{6\lambda_k}\,d\operatorname{vol}_{g_{\Psi_k}} \\
		&\le C(E,q)\int_{B_{3/4}^2}\abs{\tilde A_k}^4\,d\operatorname{vol}_{g_{\Psi_k}}\to 0. \end{aligned}\end{align}
		Since $s_0\le\mz$, this implies that $(\Psi_k)$ is a bounded sequence in $W^{2,4}(B_{s_0}^2)$ (by Lemma \ref{easysobolev} applied to $\Psi_k(\frac{3}{4}\cdot)$), so by the compact embedding $W^{2,4}(B_{s_0}^2)\hookrightarrow C^1(\bar B_{s_0}^2)$ we obtain a strong limit $\Psi_\infty$ in $C^1(\bar B_{s_0}^2)$, up to subsequences. Thus $\Psi_\infty$ is weakly conformal and, by \eqref{vanishing4}, it is also harmonic. Lemma \ref{inj} applies (with $\Psi_\infty(s_0\cdot)$ and $\text{id}_{\R^2}$ in place of $\Psi$ and $\varphi$) and gives that $\Pi_\infty\circ\Psi_\infty$ is a diffeomorphism from $\bar B_{s_0/2}^2\supseteq\bar B_{s_0^2}^2$ onto its image, hence the same is eventually true for $\Pi_k\circ\Psi_k$, giving the desired contradiction.
	\end{proof}

	\section{Multiplicity one in the limit}\label{multonesec}
	
	\begin{thm}[$\bm{n(\cdot)=1}$ for small $\bm{\sigma^2\log(\sigma^{-1})\int|A|^4}$]\label{iteration}
		Assume $\Phi\in C^\infty(\bar B_{r}^2(z),\subman)$ is a conformal immersion, critical for \eqref{functional} on $B_{r}^2(z)$ and satisfying
		\begin{itemize}
			\item $\displaystyle\ell^{-1}(\Phi(z+r\cdot)-p)\in\mathcal{R}_{K_0,\delta_0}^\Pi$ for some $\sqrt{\sigma/\epsilon_0''}<\ell<1$ and $p\in\subman$,
			\item $\displaystyle\mz\int_{B_r^2(z)}\abs{\nabla\Phi}^2\le E_0\ell^2$,
			\item $\displaystyle\int_{\Phi^{-1}(B_\ell^q(p))}\,d\operatorname{vol}_{g_\Phi}\le V\pi\ell^2$ and $\displaystyle\int_{\Phi^{-1}(B_{\eta_0\ell}^q(p))}\,d\operatorname{vol}_{g_\Phi}\le V\pi(\eta_0\ell)^2$,
			\item $\displaystyle\sigma^2\log(\sigma^{-1})\int_{B_s^2}\abs{A}^4\,d\operatorname{vol}_{g_\Phi}\le\frac{\epsilon_0''}{E_0}\int_{B_s^2}\,d\operatorname{vol}_{g_\Phi}$ for all $0<s\le r$.
		\end{itemize}
		Then, if $\sigma$ and $\ell$ are small enough (independently of each other), we have
		\begin{align*}
			&n(\Phi-p,B_{s_0^2r}^2(z),B_{\eta_0\ell}^\Pi)=1.
		\end{align*}
	\end{thm}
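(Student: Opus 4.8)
The plan is to carry out the descent described in Section~\ref{coresec.informal}: after normalizing, iterate Lemmas~\ref{stabdist} and \ref{smallcontrib} to pass from the codomain scale $\ell$ down to a scale comparable to $\sqrt\sigma$, and then invoke Lemma~\ref{sqrtsigma} at that bottom scale, where the perturbation parameter has become bounded away from $0$. First I would take $z=0$, $r=1$ and pass to $\Psi_0:=\ell^{-1}(\Phi-p):B_1^2\to\subman_{p,\ell}$, which is critical for \eqref{rescfunc} with $\tau_0:=\sigma\ell^{-2}$ by the scaling recalled in Section~\ref{backsec}. Using $|A_{\Psi_0}|=\ell|A_\Phi|$, $d\operatorname{vol}_{g_{\Psi_0}}=\ell^{-2}\,d\operatorname{vol}_{g_\Phi}$, $\Psi_0^{-1}(B_t^q)=\Phi^{-1}(B_{t\ell}^q(p))$ and $\log(\tau_0^{-1})\le\log(\sigma^{-1})$ (valid since $\ell<1$), the four hypotheses on $\Phi$ become the hypotheses of Lemma~\ref{smallcontrib} for $\Psi_0$ --- i.e.\ those of Lemma~\ref{stabdist} with $\epsilon_0'',E_0,K_0,V$ in place of $\epsilon_0',E,K,V$: the energy and the two volume bounds rescale directly, $\tau_0\le\epsilon_0''$ is exactly the assumed $\sqrt{\sigma/\epsilon_0''}<\ell$, the requirement $\ell\le\epsilon_0''$ is absorbed into ``$\ell$ small'', and the entropy bound becomes $\tau_0^2\log(\tau_0^{-1})\int_{B_s^2}|A_{\Psi_0}|^4\,d\operatorname{vol}_{g_{\Psi_0}}\le\frac{\epsilon_0''}{E_0}\int_{B_s^2}d\operatorname{vol}_{g_{\Psi_0}}\le\epsilon_0''$ for every $0<s\le1$, because $\int_{B_s^2}d\operatorname{vol}_{g_{\Psi_0}}\le E_0$. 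By the rescaling identity of Section~\ref{notsec}, $n(\Phi-p,B_{s_0^2}^2,B_{\eta_0\ell}^\Pi)=n(\Psi_0,B_{s_0^2}^2,B_{\eta_0}^\Pi)$, so it suffices to show the latter is $1$.

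Next I would construct inductively points $p_j\in\subman$, radii $r_j$, scales $L_j$ (with $p_0=p$, $r_0=1$, $L_0=\ell$), $2$-planes $\Pi_j$ and conformal immersions $\Psi_j:=L_j^{-1}(\Phi-p_j)$ on $B_{r_j}^2$ (critical for \eqref{rescfunc} with $\tau_j:=\sigma L_j^{-2}$), by applying Lemma~\ref{stabdist} with parameters $\epsilon_0'',E_0,K_0$ at each stage: it returns $p_{j+1}'\in\subman_{p_j,L_j}$, $r_{j+1}\in(\epsilon_0'r_j,s_0r_j)$, $\ell_{j+1}'\in(\epsilon_0',\tfrac12)$ and $\Pi_{j+1}$ with $\dist(\Pi_j,\Pi_{j+1})<\epsilon_0$, and I set $p_{j+1}:=p_j+L_jp_{j+1}'\in\subman$, $L_{j+1}:=L_j\ell_{j+1}'$, so that $\Psi_{j+1}=(\ell_{j+1}')^{-1}(\Psi_j-p_{j+1}')$. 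The key point is to verify that $\Psi_{j+1}$ again satisfies the hypotheses of Lemma~\ref{smallcontrib}: its energy is $<E'(V)\le E_0$; its two volume bounds are $<\lfloor(\tfrac{\eta_0}{\eta_0-\epsilon_0})^2V\rfloor+\tfrac12=V$ by the quantization trick \eqref{quanttrick}; the shape control $(\ell_{j+1}')^{-1}(\Psi_j(r_{j+1}\cdot)-p_{j+1}')\in\mathcal{R}_{K'(V),\delta_0}^{\Pi_{j+1}}$ is also membership in $\mathcal{R}_{K_0,\delta_0}^{\Pi_{j+1}}$, since $K'(V)\le K_0$ and the affine approximant in the proof of Lemma~\ref{stabdist} (cf.\ \eqref{firstorder}) is $C^0$-close to $\Psi_{j+1}(r_{j+1}\cdot)$ on all of $\bar B_1^2$, hence on the three $K_0$-circles; and the entropy bound propagates exactly as in the reduction step --- again because the assumption on $\Phi$ holds on \emph{all} sub-balls with the constant $\frac{\epsilon_0''}{E_0}\int d\operatorname{vol}_{g_\Phi}$ and $\log(\tau_{j+1}^{-1})\le\log(\sigma^{-1})$ --- provided $\tau_{j+1}\le\epsilon_0''$, i.e.\ $L_{j+1}\ge\sqrt{\sigma/\epsilon_0''}$. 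Since $L_j<2^{-j}\ell$, the largest index $N$ with $L_N\ge\sqrt{\sigma/\epsilon_0''}$ is finite; thus $\Psi_0,\dots,\Psi_N$ are all admissible and $\Psi_1,\dots,\Psi_{N+1}$ all get defined, and from $L_{N+1}<\sqrt{\sigma/\epsilon_0''}$ (maximality of $N$) and $L_{N+1}>\epsilon_0'L_N\ge\epsilon_0'\sqrt{\sigma/\epsilon_0''}$ one gets $\epsilon_0''<\tau_{N+1}=\sigma L_{N+1}^{-2}<\epsilon_0''(\epsilon_0')^{-2}\le\tfrac12$, so $\tau_{N+1}$ is bounded above and below by constants.

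Finally, applying Lemma~\ref{smallcontrib} at stages $j=0,\dots,N$ and using $\Psi_j-p_{j+1}'=\ell_{j+1}'\Psi_{j+1}$ with the rescaling identity, one gets $n(\Psi_j,B_{s_0^2r_j}^2,B_{\eta_0}^{\Pi_j})=n(\Psi_{j+1},B_{s_0^2r_{j+1}}^2,B_{\eta_0}^{\Pi_{j+1}})$, so this integer is independent of $j\in\{0,\dots,N+1\}$, and it remains to evaluate it at $j=N+1$. There $\Psi_{N+1}$ meets the hypotheses of Lemma~\ref{sqrtsigma}: the shape bound ($\mathcal{R}_{K_0,\delta_0}^{\Pi_{N+1}}$) and energy bound ($<E_0$) were just checked, $L_{N+1}<\sqrt{\sigma/\epsilon_0''}$ is small for $\sigma$ small, and, using $\int_{B_{r_{N+1}}^2}d\operatorname{vol}_{g_\Phi}\le L_{N+1}^2E_0$, the $\Phi$-hypothesis at $s=r_{N+1}$ gives $\int_{B_{r_{N+1}}^2}|A_{\Psi_{N+1}}|^4\,d\operatorname{vol}_{g_{\Psi_{N+1}}}=L_{N+1}^2\int_{B_{r_{N+1}}^2}|A_\Phi|^4\,d\operatorname{vol}_{g_\Phi}\le\frac{\epsilon_0''L_{N+1}^4}{\sigma^2\log(\sigma^{-1})}=\frac{\epsilon_0''\tau_{N+1}^{-2}}{\log(\sigma^{-1})}\le\frac{1}{\epsilon_0''\log(\sigma^{-1})}$ (using $\tau_{N+1}>\epsilon_0''$), which is small for $\sigma$ small. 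Hence Lemma~\ref{sqrtsigma} shows $\Pi_{N+1}\circ\Psi_{N+1}$ is a diffeomorphism of $\bar B_{s_0^2r_{N+1}}^2$ onto its image, and since its restriction to $\de B_{s_0^2r_{N+1}}^2$ is $\delta_0$-close to a Jordan curve of modulus $\ge4\eta_0$ winding once around the origin, Lemma~\ref{winded} forces $N(\Psi_{N+1},B_{s_0^2r_{N+1}}^2,\Pi_{N+1})\equiv1$ on $B_{\eta_0}^{\Pi_{N+1}}$, so $n(\Psi_{N+1},B_{s_0^2r_{N+1}}^2,B_{\eta_0}^{\Pi_{N+1}})=1$; tracing back the chain, $n(\Psi_0,B_{s_0^2}^2,B_{\eta_0}^\Pi)=1$, which is the claim. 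The hard part will be the bookkeeping in the iteration step: checking that the \emph{single} scale-invariant smallness hypothesis on $\Phi$ regenerates the entropy bound for each rescaled $\Psi_j$ with the \emph{same} threshold $\epsilon_0''$, and that the stopping index $N$ automatically produces a bottom scale with $\tau_{N+1}$ of controlled size, where Lemma~\ref{sqrtsigma} applies --- which is ensured by the bounded multiplicative decrement $\ell_j'\in(\epsilon_0',\tfrac12)$ of the codomain scales.
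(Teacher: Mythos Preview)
Your proposal is correct and follows essentially the same approach as the paper: normalize to $\Psi_0$, iterate Lemma~\ref{stabdist} (with the quantization trick \eqref{quanttrick} to close the volume hypothesis) until the rescaled parameter $\tau$ exceeds $\epsilon_0''$, check that the entropy hypothesis regenerates at each step via the scale-invariant bound on $\Phi$ and $\log(\tau_j^{-1})\le\log(\sigma^{-1})$, apply Lemma~\ref{sqrtsigma} at the terminal scale, and chain back via Lemma~\ref{smallcontrib}. Your indexing (terminal index $N+1$) matches the paper's $k$, and you even spell out two points the paper leaves implicit --- the passage from $\mathcal{R}_{K'(V),\delta_0}^{\Pi'}$ to $\mathcal{R}_{K_0,\delta_0}^{\Pi'}$ via the full-ball closeness \eqref{firstorder}, and the two-sided bound $\epsilon_0''<\tau_{N+1}\le\tfrac12$ from $\epsilon_0''\le\tfrac{(\epsilon_0')^2}{2}$.
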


	\begin{proof}[Proof of Theorem \ref{iteration}]
		Let $r_0:=r$, $p_0:=p$, $\ell_0:=\ell$, $\tau_0:=\sigma\ell_0^{-2}$ and $\Pi_0:=\Pi$. Note that
		\begin{align*}
			\Psi_0:=\ell^{-1}(\Phi-p)=\ell_0^{-1}(\Phi-p_0)
		\end{align*}
		is critical for \eqref{rescfunc}, with $\tau:=\tau_0\le\epsilon_0''$.
		Thus Lemma \ref{stabdist} applies to $\Psi_0$ (if $\ell$ is small enough), giving a new radius $\epsilon_0'r_0<r_1<s_0 r_0$, a new point $p'\in\subman_{p,\ell}$, a new scale $\epsilon_0'<\ell'<\frac{1}{2}$ and a new 2-plane $\Pi'$.
		Setting $r_1:=r'$, $p_1:=p_0+\ell_0 p'$, $\ell_1:=\ell'\ell_0$, $\tau_1:=\sigma\ell_1^{-2}$, $\Pi_1:=\Pi'$ and recalling \eqref{quanttrick}, the map
		\begin{align*}
			\Psi_1:=(\ell')^{-1}(\Psi_0-p')=\ell_1^{-1}(\Phi-p_1)
		\end{align*}
		still satisfies the hypotheses of Lemma \ref{stabdist}, with the parameters $r_1,\tau_1,p_1,\ell_1,\Pi_1$, provided that $\tau_1\le\epsilon_0'$: indeed, note that (assuming $\tau_1\le\epsilon_0'<1$)
		\begin{align*}
			&\tau_1^2\log(\tau_1^{-1})\int_{B_{r_1}^2(z)}\abs{A_{\Psi_1}}^4\,d\operatorname{vol}_{g_{\Psi_1}}
			\le\tau_1^2\log(\sigma^{-1})\int_{B_{r_1}^2(z)}\abs{A_{\Psi_1}}^4\,d\operatorname{vol}_{g_{\Psi_1}} \\
			&=\ell_1^{-2}\sigma^2\log(\sigma^{-1})\int_{B_{r_1}^2(z)}\abs{A_\Phi}^4\,d\operatorname{vol}_{g_\Phi}
			\le\frac{\epsilon_0''\ell_1^{-2}}{E_0}\int_{B_{r_1}^2(z)}\,d\operatorname{vol}_{g_\Phi}
			=\frac{\epsilon_0''}{2E_0}\int_{B_{r_1}^2(z)}\abs{\nabla\Psi_1}^2
			\le\epsilon_0''.
		\end{align*}
		Hence, we can iterate and define $r_j,\tau_j,p_j,\ell_j,\Pi_j$, for $j=0,1,\dots$, up to a maximum index $k\ge 1$ such that $\tau_j\le\epsilon_0''\le\epsilon_0'$ for $1\le j<k$ and $\tau_k>\epsilon_0''$: such $k$ exists since $\tau_j=\ell_j^{-2}\sigma\ge 4^j\tau_0$.
		With the same computation as above, this implies
		\begin{align}\label{sqrtsigma.a.est}
			\int_{B_{r_k}^2(z)}\abs{A}^4\,d\operatorname{vol}_{g_{\Psi_k}}\le\frac{\epsilon_0''}{\tau_k^2\log(\sigma^{-1})}\le\frac{1}{\epsilon_0''\log(\sigma^{-1})}.
		\end{align}
		If $\sigma$ and $\ell$ are small enough, Lemma \ref{sqrtsigma} applies to the map
		$\Psi_k:=\ell_k^{-1}(\Psi-p_k)$, on the ball $B_{r_k}^2(z)$: indeed, note that $\ell_k\le\ell$ and $\int_{B_{r_k}^2(z)}|A|^4\,d\operatorname{vol}_{g_{\Psi_k}}$ can be assumed arbitrarily small (by taking $\sigma$ and $\ell$ small enough), by virtue of \eqref{sqrtsigma.a.est}. This, together with Lemma \ref{winded}, gives
		\begin{align*}
			&n(\Psi_k,B_{s_0^2r_k}^2(z),B_{\eta_0}^{\Pi_k})=1.
		\end{align*}
		Also, Lemma \ref{smallcontrib} applies for all $j=0,\dots,k-1$, giving
		\begin{align*}
			n(\Phi-p,B_{s_0^2r}^2(z),B_{\eta_0\ell}^\Pi)
			&=n(\Psi_0,B_{s_0^2r_0}^2(z),B_{\eta_0}^{\Pi_0}) \\
			&=n(\Psi_1,B_{s_0^2r_1}^2(z),B_{\eta_0}^{\Pi_1}) \\
			&=\cdots \\
			&=n(\Psi_k,B_{s_0^2r_k}^2(z),B_{\eta_0}^{\Pi_k}) \\
			&=1. \qedhere
		\end{align*}
	\end{proof}
	
	As in Section \ref{backsec}, assume now that $\Phi_k:\Sigma\to\subman$ is a sequence of critical points for
	\begin{align} \int_\Sigma\,d\operatorname{vol}_{g_{\Phi_k}}+\sigma_k^2\int_\Sigma(1+|A|^2)^2\,d\operatorname{vol}_{g_{\Phi_k}} \end{align}
	with controlled area, namely
	\begin{align*} \lambda\le\int_\Sigma\,d\operatorname{vol}_{g_{\Phi_k}}\le\Lambda, \end{align*}
	and with \begin{align*} \sigma_k\to 0,\qquad\sigma_k^2\log(\sigma_k^{-1})\int_\Sigma(1+|A|^2)^2\,d\operatorname{vol}_{g_{\Phi_k}}\to 0. \end{align*}
	
	By the main result of \cite{rivminmax}, up to subsequences the varifolds $\vfd_k$ induced by $\Phi_k$ converge to a parametrized stationary varifold (see e.g. \cite[Definition~2.2]{pigriv} and \cite[Remark~2.3]{pigriv} for two equivalent definitions of this notion).
	
	As explained in \cite{rivminmax}, there could be bubbling points, and also the conformal structures induced by $\Phi_k$ could degenerate (in the space of conformal structures up to diffeomorphisms). The latter cannot happen if $\Sigma$ is a sphere, while it can yield a cylinder $\C/\Z$ (or, equivalently, $\C\setminus\{0\}=\hat\C\setminus\{0,\infty\}$) in the limit when $\Sigma$ is a torus. While both of these statements are easy consequences of the uniformization theorem for Riemann surfaces, if the genus is at least two then the general picture of degenerating conformal structures is more complicated; we refer the reader to \cite[Section~IV.5]{hum} for its description.\footnote{Given a sequence of closed hyperbolic surfaces, we can assume that they have been decomposed into marked pairs of pants, with a constant combinatorial configuration (see e.g. \cite[Section~IV.3]{hum}). We can then cut them open along the marked geodesics whose length converges to zero and apply \cite[Proposition~IV.5.1]{hum} to the resulting surfaces with boundary.}
	
	In the remainder of the paper, we will assume for simplicity that there is no bubbling and no degeneration of the conformal structure. 	Note that the arguments will apply also to the general case, working on appropriate domains different from $\Sigma$.
	
	Up to precomposing $\Phi_k$ with suitable diffeomorphisms of $\Sigma$, we can thus assume that there exist metrics $g_k$ of constant curvature ($1$, $0$ or $-1$, depending on the genus of $\Sigma$) such that $\Phi_k:(\Sigma,g_k)\to\subman$ is conformal, and such that $g_k$ converges smoothly to a limiting Riemannian metric $g_\infty$.
	The limiting varifold $\vfd_\infty$ is a parametrized stationary varifold,
	of the form $(\Sigma_\infty,\Theta_\infty,N_\infty)$, where $\Theta_\infty:\Sigma_\infty\to\subman$ is a smooth branched minimal immersion. Also, calling $\Phi_\infty\in W^{1,2}(\Sigma,\subman)$ the weak limit of $\Phi_k$ (up to subsequences), $\Theta_\infty=\Phi_\infty\circ\varphi_\infty^{-1}$ for some (locally) quasiconformal homeomorphism $\varphi_\infty:\Sigma\to\Sigma_\infty$.\footnote{Here $\Sigma_\infty$ is a Riemann surface homeomorphic (and thus diffeomorphic) to $\Sigma$.} In particular, $\Phi_\infty$ is continuous.
	
	In local conformal coordinates for $(\Sigma,g_\infty)$, as in \eqref{blackboxmeas} we have
	\begin{align}\label{structn} \operatorname{vol}_{g_{\Phi_k}}\weakstarto N_\infty\abs{\de_1\Phi_\infty\wedge\de_2\Phi_\infty}\,\mathcal{L}^2\ge\mz|\nabla\Phi_\infty|^2\,\mathcal{L}^2. \end{align}
	By the regularity result of \cite{pigriv}, which was already exploited in Section \ref{coresec}, $N_\infty$ is locally a.e. constant and thus a.e. constant (being $\Sigma_\infty$ connected).

	Setting $\nu_k:=\operatorname{vol}_{g_{\Phi_k}}$ and $\nu_\infty:=N_\infty|\de_1\Phi_\infty\wedge\de_2\Phi_\infty|\,\mathcal{L}^2$ (in local conformal coordinates for $\Sigma$), by \eqref{structn} we have $\nu_k\weakstarto\nu_\infty$.
	We can find a conformal disk $U\subset(\Sigma,g_\infty)$, which we identify with $B_1^2\subset\C$ and fix in the sequel, such that $\nu_\infty(B_{1/2}^2)>0$.

	\begin{definition}
		We denote by $\nu$ the constant value of $N_\infty$. Also, we call $T$ the set of bad points $z\in B_1^2$ which are not Lebesgue for $\nabla\Phi_\infty$, or such that $\nabla\Phi_\infty(z)$ does not have full rank, or such that
		\begin{align}\label{bigdistort} \max_{\abs{x}=1}\abs{\ang{\nabla\Phi_\infty(z),x}}>2\nu\min_{\abs{x}=1}\abs{\ang{\nabla\Phi_\infty(z),x}}. \end{align}
		We have $\mathcal{L}^2(T)=0$, since $\nabla\Theta_\infty$ has full rank a.e. by conformality (hence the same holds for $\Phi_\infty$ by the chain rule\footnote{See e.g. \cite[Lemma~4.12]{imayoshi} and \cite[Lemma~III.6.4]{lehto}.}) and since \eqref{bigdistort} implies
		$\nu|\de_1\Phi_\infty\wedge\de_2\Phi_\infty|(z)<\mz|\nabla\Phi_\infty|^2(z)$ (as it can be immediately verified using a singular value decomposition for $\nabla\Phi_\infty(z)$).
	\end{definition}

	\begin{definition}
		We now specify $K'':=2\nu$ and we set $E'':=\pi\nu((K'')^2+1)$.
		Finally, we choose $V>0$ such that $V=\floor{V}+\mz$ and
		\begin{align}\label{maxeasy}
		&\norm{\vfd_\infty}(\bar B_\ell^q(p))<V\pi\ell^2
		\end{align}
		for all $\ell>0$ and all $p\in\subman$. Such $V$ exists by the monotonicity formula satisfied by the stationary varifold $\vfd_\infty$.
		Note that now also the constants $K_0$, $E_0$, $s_0$, $\eta_0$, as well as $\epsilon_0$, $\delta_0$, $\epsilon_0'$ and $\epsilon_0''$, are determined.
	\end{definition}

	\begin{thm}[multiplicity one]\label{main.bis}
		We have $N_\infty=1$ a.e., or equivalently $\nu=1$.
	\end{thm}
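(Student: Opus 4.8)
The plan is to exhibit a single blow-up configuration — a point $z_0\in B_{1/2}^2$, a small radius $r_0$, a target scale $\ell_0$, a $2$-plane $\Pi$ and a point $p\in\subman$ — for which, for all large $k$, the map $\Phi_k$ restricted to $\bar B_{r_0}^2(z_0)$ satisfies all the hypotheses of Theorem \ref{iteration} with $\sigma=\sigma_k$. Theorem \ref{iteration} then gives $n(\Phi_k-p,B_{s_0^2r_0}^2(z_0),B_{\eta_0\ell_0}^\Pi)=1$ for $k$ large. On the other hand, the varifold-convergence argument used in the proofs of Lemmas \ref{closetoint} and \ref{smallcontrib} will show that this same macroscopic multiplicity converges to $\nu$ as $k\to\infty$. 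Comparing the two values forces $\nu=1$, i.e.\ $N_\infty\equiv 1$.

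To choose the configuration, recall from \eqref{structn} that $\operatorname{vol}_{g_{\Phi_k}}\weakstarto\nu_\infty=\nu\,|\de_1\Phi_\infty\wedge\de_2\Phi_\infty|\,\mathcal L^2$ on $B_1^2$ and that $\nu_\infty(B_{1/2}^2)>0$; in particular $\nabla\Phi_\infty$ has full rank on a set of positive measure in $B_{1/2}^2$. I would pick $z_0\in B_{1/2}^2$ which is a Lebesgue point of $\nabla\Phi_\infty$ with $\nabla\Phi_\infty(z_0)$ of full rank, with $z_0\notin T$, with $\nu_\infty$ of positive density at $z_0$, and at which the rescaled restrictions of $\Phi_\infty$ to small circles around $z_0$ converge uniformly to the linearization at $z_0$ (so that \cite[Lemmas~A.4~and~A.5]{pigriv} apply); $\nu_\infty$-a.e.\ point of $B_{1/2}^2$ has these properties. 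Set $p:=\Phi_\infty(z_0)\in\subman$, let $\Pi$ be the $2$-plane spanned by $\nabla\Phi_\infty(z_0)$, and put $c:=\min_{|x|=1}\abs{\ang{\nabla\Phi_\infty(z_0),x}}>0$; for $r_0>0$ small (chosen so that $\nu_\infty$ charges none of the circles $\de B_{s_0^ir_0}^2(z_0)$, $i=0,1,2$, and so that the circle-expansion is valid at these radii) put $\ell_0:=cr_0<1$. Then $x\mapsto c^{-1}\ang{\nabla\Phi_\infty(z_0),x}$ is a linear element of $\mathcal D_{K''}^\Pi\subseteq\mathcal D_{K_0}^\Pi$, because $z_0\notin T$ (so that \eqref{bigdistort} fails and its distortion is $\le 2\nu=K''$), and $\ell_0^{-1}(\Phi_\infty(z_0+r_0\cdot)-p)$ is $\tfrac{\delta_0}{2}$-close to it on $\de B_1^2\cup\de B_{s_0}^2\cup\de B_{s_0^2}^2$. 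Passing to a subsequence, $\Phi_k(z_0+r_0\cdot)|_{\de B_\rho^2}\to\Phi_\infty(z_0+r_0\cdot)|_{\de B_\rho^2}$ uniformly for $\rho\in\{r_0,s_0r_0,s_0^2r_0\}$ (by \cite[Lemma~A.5]{pigriv}), hence $\ell_0^{-1}(\Phi_k(z_0+r_0\cdot)-p)\in\mathcal R_{K_0,\delta_0}^\Pi$ for $k$ large. The energy bound $\tfrac12\int_{B_{r_0}^2(z_0)}\abs{\nabla\Phi_k}^2\le E_0\ell_0^2$ follows from $\tfrac12\abs{\nabla\Phi_k}^2\mathcal L^2\weakstarto\nu_\infty$, Lebesgue differentiation at $z_0$, the distortion bound at $z_0$, and the choice $E''=\pi\nu((K'')^2+1)$, which leaves a factor of $2$ of room. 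The area bounds hold since $\norm{\vfd_k}\weakstarto\norm{\vfd_\infty}$ on $\R^q$, the choice \eqref{maxeasy} of $V$ gives $\norm{\vfd_\infty}(\bar B_{\ell_0}^q(p))<V\pi\ell_0^2$ and $\norm{\vfd_\infty}(\bar B_{\eta_0\ell_0}^q(p))<V\pi(\eta_0\ell_0)^2$, and $\int_{\Phi_k^{-1}(B_t^q(p))}\operatorname{vol}_{g_{\Phi_k}}\le\norm{\vfd_k}(B_t^q(p))$. Finally $\ell_0>\sqrt{\sigma_k/\epsilon_0''}$ for $k$ large, since $\ell_0$ is fixed and $\sigma_k\to0$.

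The one remaining — and most delicate — hypothesis is the scale-invariant entropy bound $\sigma_k^2\log(\sigma_k^{-1})\int_{B_s^2(z_0)}\abs{A}^4\,d\operatorname{vol}_{g_{\Phi_k}}\le\tfrac{\epsilon_0''}{E_0}\int_{B_s^2(z_0)}\,d\operatorname{vol}_{g_{\Phi_k}}$ for all $0<s\le r_0$. For $s\in[r_0/2,r_0]$ the left-hand side is at most $\sigma_k^2\log(\sigma_k^{-1})\int_\Sigma\abs{A}^4\operatorname{vol}_{g_{\Phi_k}}\to0$ (the global entropy condition), while the right-hand side stays $\ge\tfrac{\epsilon_0''}{2E_0}\nu_\infty(B_{r_0/2}^2(z_0))>0$ for $k$ large, because $\operatorname{vol}_{g_{\Phi_k}}(B_s^2(z_0))\ge\liminf_k\operatorname{vol}_{g_{\Phi_k}}(B_{r_0/2}^2(z_0))\ge\nu_\infty(B_{r_0/2}^2(z_0))$; so the inequality holds. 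For $s<r_0/2$ one exploits that the energy of $\Phi_k$ on $B_{r_0}^2(z_0)$ is small for $r_0$ small (it converges to $\approx\nu\pi r_0^2\abs{\de_1\Phi_\infty\wedge\de_2\Phi_\infty}(z_0)$), so the interior estimates for critical points of the perturbed functional from \cite{rivminmax}, together with the entropy condition, prevent concentration of $\abs{A_{\Phi_k}}$ on $B_{r_0/2}^2(z_0)$ at scales $\gtrsim\sqrt{\sigma_k}$; this suffices, since in the proof of Theorem \ref{iteration} the bound is used only at the iteration radii $r_0>r_1>\cdots>r_k\approx\sqrt{\sigma_k}$. I expect this step — equivalently, the selection of a blow-up center along which $\Phi_k$ stays regular at small scales — to be the main obstacle, and it is where the absence of bubbling enters.

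With all hypotheses verified, Theorem \ref{iteration} yields $n(\Phi_k-p,B_{s_0^2r_0}^2(z_0),B_{\eta_0\ell_0}^\Pi)=1$ for $k$ large. For the reverse computation, set $\Psi_k:=\ell_0^{-1}(\Phi_k(z_0+r_0\cdot)-p)$, which is critical for \eqref{rescfunc} and, by the above, lies in $\mathcal R_{K_0,\delta_0}^\Pi$ with the energy, area and entropy bounds required in Theorem \ref{blackbox}; up to a subsequence $\Psi_k\weakto\Psi_\infty:=\ell_0^{-1}(\Phi_\infty(z_0+r_0\cdot)-p)$ and, by the rescaled form of \eqref{blackboxmeas}, $\tfrac12\abs{\nabla\Psi_k}^2\mathcal L^2\weakstarto\nu\,\abs{\de_1\Psi_\infty\wedge\de_2\Psi_\infty}\,\mathcal L^2$ on $B_1^2$. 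Arguing as in Lemmas \ref{closetoint} and \ref{smallcontrib}: $\Psi_\infty$ satisfies the convex hull property and, on $\varphi_\infty(B_{s_0^2}^2)$, $\Psi_\infty\circ\varphi_\infty^{-1}$ is harmonic (Theorem \ref{regthm}), so Lemma \ref{inj} gives that $\Pi\circ\Psi_\infty$ maps $\bar B_{s_0^2}^2$ diffeomorphically onto its image, which by Lemma \ref{winded} is an open superset of $B_{\eta_0}^\Pi$; hence the $\Pi$-pushforward of the limiting $2$-varifold induced by $\restr{\Psi_k}{B_{s_0^2}^2}$ equals $B_{\eta_0}^\Pi$ (and a bit more) with constant multiplicity $\nu$. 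Therefore the area formula and tightness give $\media_{B_{\eta_0}^\Pi}N(\Psi_k,B_{s_0^2}^2,\Pi)\to\nu$, so that $n(\Phi_k-p,B_{s_0^2r_0}^2(z_0),B_{\eta_0\ell_0}^\Pi)=n(\Psi_k,B_{s_0^2}^2,B_{\eta_0}^\Pi)=\nu$ for $k$ large. Comparing with the value $1$ from Theorem \ref{iteration} gives $\nu=1$.
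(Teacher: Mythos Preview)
Your overall architecture matches the paper's: set up a blow-up configuration so that Theorem \ref{iteration} applies and gives macroscopic multiplicity $1$, then compute the same macroscopic multiplicity via varifold convergence to get $\nu$, and compare. The final step, the energy bound, the two area bounds and the three-circles control are handled essentially as in the paper.

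The genuine gap is the scale-invariant entropy bound
\[
\sigma_k^2\log(\sigma_k^{-1})\int_{B_s^2(z_0)}\abs{A}^4\,d\operatorname{vol}_{g_{\Phi_k}}\le\frac{\epsilon_0''}{E_0}\int_{B_s^2(z_0)}\,d\operatorname{vol}_{g_{\Phi_k}}\qquad\text{for all }0<s\le r_0.
\]
You handle $s\in[r_0/2,r_0]$ correctly, but the argument for $s<r_0/2$ is only a sketch, and as stated it does not go through. The absence of bubbling is a statement about non-concentration of the \emph{area} measures $\nu_k$; it says nothing about non-concentration of $\abs{A_{\Phi_k}}^4\,\operatorname{vol}_{g_{\Phi_k}}$ at small scales around a \emph{fixed} center $z_0$. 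There is no reason the measure $\abs{A_{\Phi_k}}^4\,\operatorname{vol}_{g_{\Phi_k}}$ could not place a positive (though $o(\log(\sigma_k^{-1})/\sigma_k^2)$) amount of mass on a tiny ball around $z_0$ while the area there is extremely small, violating the ratio bound. Your remark that ``the bound is used only at the iteration radii $r_0>r_1>\cdots>r_k\approx\sqrt{\sigma_k}$'' does not help: these radii depend on $\Phi_k$ and fill out essentially the whole range $[\sqrt{\sigma_k},r_0]$, so you really do need the bound at all small scales. The appeal to unspecified ``interior estimates from \cite{rivminmax}'' does not substitute for an argument.

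The paper fixes this not by proving more regularity at a fixed $z_0$, but by letting the center move with $k$. Define $\bad_k\subseteq B_{1/2}^2$ as the set of centers where the ratio bound fails at some radius $0<r<\tfrac12$; a Besicovitch covering of $\bad_k$ by such bad balls gives
\[
\nu_k(\bad_k)\le C\,\frac{E_0}{\epsilon_0''}\,\sigma_k^2\log(\sigma_k^{-1})\int_\Sigma\abs{A}^4\,d\operatorname{vol}_{g_{\Phi_k}}\to 0.
\]
Pass to a subsequence so that $\overline{B_{1/2}^2\setminus\bad_k}$ Hausdorff-converges to some compact $S$; since $\nu_k(\bad_k)\to 0$ and $\nu_k\weakstarto\nu_\infty$, one gets $\nu_\infty(S)\ge\nu_\infty(B_{1/2}^2)>0$, hence $\mathcal L^2(S)>0$. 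Now pick $z\in S\setminus T$ and $z_k\in B_{1/2}^2\setminus\bad_k$ with $z_k\to z$; by construction the entropy bound holds at $z_k$ for \emph{every} scale, tautologically. All your other verifications transfer to the varying centers $z_k$ after a conformal reparametrization sending $z_k$ to $z$ (needed anyway because the chart is conformal for $g_\infty$, not for $g_k$). This is the missing idea.
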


	\begin{proof}[Proof of Theorem \ref{main.bis}]
		Let $\bad_k$ be the Borel set of points $z\in B_{1/2}^2$ such that
		\begin{align*}
			&\sigma_k^2\log(\sigma_k^{-1})\int_{B_{r}^2(z)}\abs{A}^4\,d\operatorname{vol}_{g_{\Phi_k}}\ge\frac{\epsilon_0''}{E_0}\int_{B_{r}^2(z)}\,d\operatorname{vol}_{g_{\Phi_k}}
		\end{align*}
		for some radius $0<r<\mz$. By Besicovitch's covering lemma, we get a collection of points $z_i\in\bad_k$ and radii $0<r_i<\mz$ such that
		\begin{align*} \sigma_k^2\log(\sigma_k^{-1})\int_{B_{r_i}^2(z_i)}\abs{A}^4\,d\operatorname{vol}_{g_{\Phi_k}}\ge\frac{\epsilon_0''}{E_0}\int_{B_{r_i}^2(z_i)}\,d\operatorname{vol}_{g_{\Phi_k}},\qquad\uno_{\bad_k}\le\sum_i\uno_{B_{r_i}^2(z_i)}\le\mathfrak{N}, \end{align*}
		for some universal constant $\mathfrak{N}$. Thus we get
		\begin{align*} \nu_k(\bad_k)&\le\sum_i\operatorname{vol}_{g_{\Phi_k}}(B_{r_i}^2(z_i))\le\frac{E_0}{\epsilon_0''}\sigma_k^2\log(\sigma_k^{-1})\sum_i\int_{B_{r_i}^2(z_i)}\abs{A}^4\,d\operatorname{vol}_{g_{\Phi_k}} \\
		&\le\frac{E_0\mathfrak{N}}{\epsilon_0''}\sigma_k^2\log(\sigma_k^{-1})\int_\Sigma\abs{A}^4\,d\operatorname{vol}_{g_{\Phi_k}}\to 0. \end{align*}
		Up to subsequences, we can assume that $\bar{B_{1/2}^2\setminus\bad_k}$ converges in the Hausdorff topology to some compact set $S\subseteq\bar B_{1/2}^2$. We remark that $\nu_\infty(S)>0$: indeed, for any compact neighborhood $F$ of $S$ in $B_1^2$, we have $B_{1/2}^2\setminus\bad_k\subseteq F$ eventually and so
		\begin{align*}
			&\nu_\infty(F)\ge\limsup_{k\to\infty}\nu_k(F)\ge\limsup_{k\to\infty}(\nu_k(B_{1/2}^2)-\nu_k(\bad_k))=\limsup_{k\to\infty}\nu_k(B_{1/2}^2)\ge\nu_\infty(B_{1/2}^2)>0.
		\end{align*}
		It follows from \eqref{structn} that $\mathcal{L}^2(S)>0$.
		
		We now show that $N_\infty=1$ a.e. on $S\setminus T$, which has positive Lebesgue measure.
		This will show that $\nu=1$, as desired. Fix any $z\in S\setminus T$ and take a sequence $z_k\in B_{1/2}^2\setminus\bad_k$ with $z_k\to z$. Locally we can find conformal reparametrizations $\tilde\Phi_k$ of $\Phi_k(z_k+\cdot)$, by means of diffeomorphisms converging smoothly to the identity on a small neighborhood of $0$.\footnote{For instance, one can isometrically identify a neighborhood of $z_k$ in $(\Sigma,g_k)$ with a neighborhood of $z$ in $(\Sigma,g_\infty)$, by means of the exponential map.} By weak convergence $\tilde\Phi_k\weakto\Phi_\infty(z+\cdot)$ in $W^{1,2}$, for a.e. radius $r>0$ we have
		\begin{align}\label{slicefinal} \quad\tilde\Phi_k(r\cdot)\to\Phi_\infty(z+r\cdot)\quad\text{in }C^0(\de B_1^2\cup\de B_{s_0}^2\cup\de B_{s_0^2}^2) \end{align}
		up to further subsequences.\footnote{This can be obtained by applying \cite[Lemma~A.5]{pigriv} to the weakly converging $\R^{3\envdim}$-valued maps
		\begin{align*}
			&(\tilde\Phi_k,\tilde\Phi_k(s_0\cdot),\tilde\Phi_k(s_0^2\cdot))\weakto(\Phi_\infty(z+\cdot),\Phi_\infty(z+s_0\cdot),\Phi_\infty(z+s_0^2\cdot)).
		\end{align*}}
		Using \cite[Lemma~A.4]{pigriv} and the fact that $z\nin T$, we can assume that $r$ satisfies
		\begin{align} \abs{\Phi_\infty(z+rx)-\Phi_\infty(z)-\ang{\nabla\Phi_\infty(z),rx}}<\delta_0\ell\quad\text{for }x\in\de B_1^2\cup\de B_{s_0}^2\cup\de B_{s_0^2}^2, \end{align}
		\begin{align}\label{smallasen} \mz\int_{B_{r}^2(z)}\abs{\nabla\Phi_\infty}^2<(\pi r^2)\abs{\nabla\Phi_\infty(z)}^2\le \ell^2\pi((K'')^2+1), \end{align}
		with $\ell:=r\min_{\abs{x}=1}\abs{\ang{\nabla\Phi_\infty(0),x}}$.
		Setting $p:=\Phi_\infty(z)$, note that \eqref{maxeasy} gives
		\begin{align*}
			&\norm{\vfd_k}(B_\ell^q(p))<V\pi\ell^2,\quad\norm{\vfd_k}(B_{\eta_0\ell}^q(p))<V\pi(\eta_0\ell)^2
		\end{align*}
		eventually, which trivially implies
		\begin{align}
			&\int_{\tilde\Phi_k^{-1}(B_\ell^q(p))}\,d\operatorname{vol}_{g_{\tilde\Phi_k}}<V\pi\ell^2,\quad\int_{\tilde\Phi_k^{-1}(B_{\eta_0\ell}^q(p))}\,d\operatorname{vol}_{g_{\tilde\Phi_k}}<V\pi(\eta_0\ell)^2.
		\end{align}
		Also, \eqref{structn} and \eqref{smallasen} give
		\begin{align*}
			&\lim_{k\to\infty}\mz\int_{B_{r}^2(z)}\abs{\nabla\tilde\Phi_k}^2
			=\lim_{k\to\infty}\nu_k(B_r^2(z))\le\frac{\nu}{2}\int_{B_r^2(z)}\abs{\nabla\Phi_\infty}^2<E''\ell^2.
		\end{align*}
		Thanks to the fact that $z_k\nin\bad_k$ and the above inequalities, eventually $\tilde\Phi_k$ satisfies the hypotheses of Theorem \ref{iteration} on the ball $B_r^2$, provided that $r$ (and thus $\ell$) is chosen small enough. Setting $\Psi_k:=\ell^{-1}(\tilde\Phi_k-p)$, we infer that
		\begin{align}\label{multone.macro}
			&n(\Psi_k,B_{s_0^2r}^2,B_{\eta_0}^\Pi)=1,
		\end{align}
		where $\Pi$ is the 2-plane spanned by $\nabla\Phi_\infty(0)$.
		
		Since $r$ can be chosen arbitrarily small (possibly changing the subsequence guaranteeing \eqref{slicefinal}), the argument used in the proof of \cite[Lemma~III.10]{rivminmax} shows that $N_\infty(z)=1$.
		
		Alternatively, \eqref{multone.macro} gives
		\begin{align*}
			\abs{\frac{\norm{\Pi_*\vfd_k'}(B_{\eta_0}^\Pi)}{\pi\eta_0^2}-1}<\frac{1}{8},
		\end{align*}
		where the varifold $\vfd_k'$ is induced by $\restr{\Psi_k}{B_{s_0^2r}^2}$ and converges to the varifold $\vfd_\infty'$ induced by $(\varphi_\infty(B_{s_0^2r}^2(z)),\ell^{-1}(\Theta_\infty-p),\nu)$. Assuming without loss of generality that $\nabla\Theta_\infty(\varphi_\infty(z))\neq 0$, $\Pi\circ\Theta_\infty$ is a diffeomorphism from $\varphi_\infty(B_{s_0^2r}^2(z))$ onto its image (for $r$ small enough). Hence, at a.e. point of $\Pi$ the varifold $\Pi_*\vfd_\infty$ has density either $0$ or $\nu$. Since $\Pi\circ\Phi_\infty(B_{s_0^2r}^2(z))$ is a superset of $B_{\eta_0\ell}^\Pi(\Pi(p))$ (by Lemma \ref{winded}), it follows that
		\begin{align*}
			&\frac{\norm{\Pi_*\vfd_\infty'}(B_{\eta_0}^\Pi)}{\pi\eta_0^2}=\nu.
		\end{align*}
		The convergence $\frac{\norm{\Pi_*\vfd_k'}(B_{\eta_0}^\Pi)}{\pi\eta_0^2}\to\frac{\norm{\Pi_*\vfd_\infty'}(B_{\eta_0}^\Pi)}{\pi\eta_0^2}$ thus gives
		$\abs{\nu-1}\le\frac{1}{8}$, and again we conclude that $\nu=1$.
	\end{proof}
	
	
	\appendix
	\section*{Appendix.}
	\renewcommand{\thesection}{A}
	\setcounter{definition}{0}
	\setcounter{equation}{0}
	
	\begin{lemmaen}[big image under a boundary constraint]\label{winded}
		Assume that $F\in C^0(\bar B_1^2,\R^2)$ satisfies
		\begin{align} \abs{F(x)-\varphi(x)}\le\delta\qquad\text{for all }x\in\de B_1^2 \end{align}
		for some $0<\delta<1$ and some homeomorphism $\varphi:\R^2\to\R^2$, with $\varphi(0)=0$ and $\min_{\abs{x}=1}\abs{\varphi(x)}\ge 1$. Then
		\begin{align} F(B_1^2)\supseteq B_{1-\delta}^2. \end{align}
	\end{lemmaen}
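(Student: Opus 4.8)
The plan is to run a standard Brouwer degree argument. Fix a point $p\in B_{1-\delta}^2$, so $|p|<1-\delta$; the goal is to show $p\in F(B_1^2)$. First I would check that $p\notin F(\de B_1^2)$, so that the degree $\deg(F,B_1^2,p)$ is well defined: for $|x|=1$ we have $|\varphi(x)-p|\ge|\varphi(x)|-|p|\ge 1-|p|>\delta$, hence $|F(x)-p|\ge|\varphi(x)-p|-|F(x)-\varphi(x)|\ge(1-|p|)-\delta>0$.

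Next I would compare $F$ with $\varphi$ via the straight-line homotopy $H_t(x):=(1-t)\varphi(x)+tF(x)$, $t\in[0,1]$. On $\de B_1^2$ the same estimate gives $|H_t(x)-p|\ge|\varphi(x)-p|-t\,|F(x)-\varphi(x)|\ge(1-|p|)-\delta>0$, so this homotopy never passes through $p$ on the boundary. By homotopy invariance of the Brouwer degree, $\deg(F,B_1^2,p)=\deg(\varphi,B_1^2,p)$.

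It remains to see that $\deg(\varphi,B_1^2,p)\ne 0$, and this is where the normalization of $\varphi$ enters. Since $\varphi:\R^2\to\R^2$ is a homeomorphism, $\varphi(B_1^2)$ and $\varphi(\R^2\setminus\bar B_1^2)$ are respectively the bounded and the unbounded components of $\R^2\setminus\varphi(\de B_1^2)$. The hypothesis $\min_{|x|=1}|\varphi(x)|\ge 1$ says that $\varphi(\de B_1^2)$ is disjoint from the open ball $B_1^2$, while $\varphi(0)=0\in B_1^2\cap\varphi(B_1^2)$; as $B_1^2$ is connected, $B_1^2\subseteq\varphi(B_1^2)$. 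In particular $p\in B_{1-\delta}^2\subset B_1^2\subseteq\varphi(B_1^2)$, so $\varphi^{-1}(p)$ is a single point of $B_1^2$ and the local homeomorphism property gives $\deg(\varphi,B_1^2,p)=\pm 1$ (the sign according to whether $\varphi$ preserves or reverses orientation). Combining with the previous step, $\deg(F,B_1^2,p)=\pm 1\ne 0$, so the solvability property of the degree yields $p\in F(B_1^2)$. Since $p\in B_{1-\delta}^2$ was arbitrary, $F(B_1^2)\supseteq B_{1-\delta}^2$.

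The argument is entirely routine; the only point needing a word of care is the identification of $B_1^2$ as a subset of the Jordan domain $\varphi(B_1^2)$, which relies on the Jordan curve theorem (or invariance of domain) together with $\varphi(0)=0$ and the normalization $\min_{|x|=1}|\varphi(x)|\ge 1$. If one prefers to avoid degree theory, the identical conclusion can be reached by the winding-number form of the argument principle applied to the loops $\varphi|_{\de B_1^2}-p$ and $F|_{\de B_1^2}-p$.
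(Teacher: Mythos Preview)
Your proof is correct and follows essentially the same route as the paper: both use the straight-line homotopy between $F$ and $\varphi$ on $\partial B_1^2$, and then a topological property of the homeomorphism $\varphi$ to conclude. The only cosmetic difference is that the paper phrases the argument via $\pi_1(\R^2\setminus\{y\})$ (non-contractibility of the boundary loop), whereas you use Brouwer degree; the paper also reduces to the base point $0$ via a second linear homotopy instead of your connectedness argument showing $B_1^2\subseteq\varphi(B_1^2)$, but these are equivalent formulations of the same idea.
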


	\begin{proof}[Proof of Lemma \ref{winded}]
		It suffices to show that, for a fixed $y\in B_{1-\delta}^2$, the closed curve $\Gamma':=\restr{F}{\de B_1^2}$ is not contractible in $\R^2\setminus\set{y}$: once this is done, if we had $y\nin F(B_1^2)$, i.e. $y\nin F(\bar B_1^2)$, then $F$ would provide a homotopy from $\Gamma'$ to the constant curve $F(0)$ in $\R^2\setminus\set{y}$, yielding a contradiction.
		
		Letting $\Gamma:=\restr{\varphi}{\de B_1^2}$ and $\gamma:=\Gamma'-\Gamma$, we have $\abs{\gamma(x)}\le\delta$ for all $x\in\de B_1^2$. Hence, $\Gamma$ is homotopic to $\Gamma'$ in $\R^2\setminus B_{1-\delta}^2\subseteq\R^2\setminus\set{y}$ by means of the homotopy
		\begin{align*} \Gamma+t\gamma,\qquad 0\le t\le 1. \end{align*}
		So we are left to show that $\Gamma$ is not contractible in $\R^2\setminus\set{y}$, i.e. that $\Gamma-y$ is not contractible in $\R^2\setminus\set{0}$. The curve $\Gamma-y$ is homotopic to $\Gamma$ in $\R^2\setminus\set{0}$, by means of the homotopy
		\begin{align*} \Gamma-ty,\qquad 0\le t\le 1, \end{align*}
		which avoids the origin since $\abs{y}<1$. Finally, $\Gamma$ is not contractible in $\R^2\setminus\set{0}$, since $\varphi$ (once restricted to a homeomorphism of $\R^2\setminus\set{0}$) induces an automorphism of $\pi_1(\R^2\setminus\set{0})$ sending the class of the generator $\text{id}_{\de B_1^2}$ to the class of $\Gamma$. Hence, $\Gamma-y$ is not contractible in $\R^2\setminus\set{0}$, too, as desired.
	\end{proof}

%
	
	\begin{lemmaen}[elliptic $\bm{W^{2,4}}$-estimate]\label{easysobolev}
		For a function $\Psi\in C^\infty(\bar B_1)$ and a $0<\tau<1$ we have
		\begin{align*} \norm{\Psi}_{W^{2,4}(B_{\tau}^2)}\le C(\tau)(\norm{\Delta\Psi}_{L^4(B_1^2)}+\norm{\nabla\Psi}_{L^2(B_1^2)}+\norm{\Psi}_{L^2(B_1^2)}). \end{align*}
	\end{lemmaen}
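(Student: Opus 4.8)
The plan is to deduce the bound from the classical interior $L^4$-regularity theory for the Laplace operator, the only subtlety being the mismatch between the $L^4$ norm on the left and the $L^2$ norms on the right, which is absorbed by the Sobolev embedding in dimension two. No fractional-order interpolation is needed.

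\emph{Interior elliptic estimate.} First I would invoke the standard interior $W^{2,p}$ estimate for $\Delta$ (a consequence of the Calder\'on--Zygmund inequality): for every $0<\tau<1$ there is a constant $C(\tau)$ such that
\[
\|\Psi\|_{W^{2,4}(B_\tau^2)}\le C(\tau)\bigl(\|\Delta\Psi\|_{L^4(B_1^2)}+\|\Psi\|_{L^4(B_1^2)}\bigr).
\]
If one prefers a self-contained argument, fix $\tau<\rho<1$ and a cutoff $\chi\in C^\infty_c(B_\rho^2)$ with $\chi\equiv 1$ on $B_\tau^2$, set $u:=\chi\Psi$ (compactly supported) and write $u=N*\Delta u$ with $N(x)=\frac{1}{2\pi}\log|x|$. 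Since the second derivatives of $N$ form a Calder\'on--Zygmund kernel, bounded on $L^4(\R^2)$, one gets $\|D^2u\|_{L^4}\le C\|\Delta u\|_{L^4}$, and expanding $\Delta u=\chi\Delta\Psi+2\nabla\chi\cdot\nabla\Psi+\Psi\Delta\chi$ leaves one to control $\|\nabla\Psi\|_{L^4(B_\rho^2)}$ and $\|\Psi\|_{L^4(B_\rho^2)}$; the former is handled by a preliminary application of the $L^2$-based interior estimate on a slightly larger ball (which bounds $\|\Psi\|_{W^{2,2}}$, hence $\|\nabla\Psi\|_{W^{1,2}}\hookrightarrow L^4$, in terms of $\|\Delta\Psi\|_{L^2}+\|\Psi\|_{L^2}\le\|\Delta\Psi\|_{L^4(B_1^2)}+\|\Psi\|_{L^2(B_1^2)}$, using H\"older on the bounded ball), and the latter is treated as in the next step.

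\emph{Sobolev embedding and conclusion.} Since $B_1^2$ is two-dimensional, $W^{1,2}(B_1^2)\hookrightarrow L^4(B_1^2)$, so
\[
\|\Psi\|_{L^4(B_1^2)}\le C\|\Psi\|_{W^{1,2}(B_1^2)}=C\bigl(\|\nabla\Psi\|_{L^2(B_1^2)}+\|\Psi\|_{L^2(B_1^2)}\bigr).
\]
Inserting this into the interior estimate of the previous step yields the claim.

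I expect no genuine obstacle here: this is a routine interior $L^p$ estimate, and the only thing requiring attention is the bookkeeping of the nested radii and Lebesgue exponents — in particular the fact that dropping from $L^4$ to $L^2$ on the first- and zeroth-order terms is exactly compensated by the two-dimensional Sobolev inequality.
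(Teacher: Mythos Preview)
Your argument is correct and essentially the same as the paper's: both use a cutoff and Calder\'on--Zygmund to reduce matters to bounding $\|\nabla\Psi\|_{L^4}$ and $\|\Psi\|_{L^4}$ on an intermediate ball, then invoke the $L^2$-based interior estimate together with the two-dimensional Sobolev embedding $W^{2,2}\hookrightarrow W^{1,4}$ (equivalently $W^{1,2}\hookrightarrow L^4$) to close the loop. The only cosmetic difference is that the paper writes the two applications ($p=2$ on $B_t^2\subset B_1^2$, then $p=4$ on $B_\tau^2\subset B_t^2$) out separately with an explicit intermediate radius $t=\tfrac{1+\tau}{2}$, whereas your ``quick'' version quotes the interior $W^{2,4}$ estimate with only $\|\Psi\|_{L^4}$ on the right-hand side as a black box before applying Sobolev---a slightly cleaner packaging of the same content.
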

	
	\begin{proof}[Proof of Lemma \ref{easysobolev}]
		Given two radii $0<r<s\le 1$, let us choose a cut-off function $\rho\in C^\infty_c(B_s^2)$ with $\rho=1$ on $B_r^2$. Since $\rho\Psi\in C^\infty_c(\R^2)$, standard Calder\'on--Zygmund estimates give
		\begin{align}\label{cutoff} \begin{aligned} \norm{\nabla^2\Psi}_{L^p(B_r^2)}
		&\le\norm{\nabla^2(\rho\Psi)}_{L^p(\R^2)}
		\le C(p)\norm{\Delta(\rho\Psi)}_{L^p(\R^2)} \\
		&\le C(p,r,s)(\norm{\Delta\Psi}_{L^p(B_s^2)}+\norm{\nabla\Psi}_{L^p(B_s^2)}+\norm{\Psi}_{L^p(B_s^2)}) \end{aligned} \end{align}
		for all $1<p<\infty$.
		Setting $t:=\frac{1+\tau}{2}$ and applying \eqref{cutoff} with $p:=2$, $r:=t$ and $s:=1$ we get
		\begin{align*} \norm{\nabla^2\Psi}_{L^2(B_t^2)}\le C(\tau)(\norm{\Delta\Psi}_{L^2(B_1^2)}+\norm{\nabla\Psi}_{L^2(B_1^2)}+\norm{\Psi}_{L^2(B_1^2)}), \end{align*}
		hence $\norm{\Psi}_{W^{2,2}(B_t^2)}$ is bounded by the desired quantity. Using Sobolev's embedding $W^{2,2}(B_t^2)\hookrightarrow W^{1,4}(B_t^2)$ and \eqref{cutoff} with $p:=4$, $r:=\tau$ and $s:=t$, we obtain
		\begin{align*} \norm{\Psi}_{W^{2,4}(B_{\tau}^2)}&\le C(\tau)(\norm{\Delta\Psi}_{L^4(B_{t}^2)}+\norm{\Psi}_{W^{2,2}(B_{t}^2)}) \\
		&\le C(\tau)(\norm{\Delta\Psi}_{L^4(B_1^2)}+\norm{\nabla\Psi}_{L^2(B_1^2)}+\norm{\Psi}_{L^2(B_1^2)}). \qedhere \end{align*}
	\end{proof}

	\begin{lemmaen}[compactness of normal solutions to Beltrami equation]\label{qchomcptaux}
		Given a sequence $\psi_k:\C\to\C$ of $K$-quasiconformal homeomorphisms with the normalization conditions
		\begin{align*}
		\psi_k(0)=0,\qquad\psi_k(1)=1,
		\end{align*}
		there exists a $K$-quasiconformal homeomorphism $\psi_\infty:\C\to\C$ satisfying the same normalization condition and such that, up to subsequences,
		$\psi_k\to\psi_\infty$ and $\psi_k^{-1}\to\psi_\infty^{-1}$ in $C^0_{loc}(\C)$.
	\end{lemmaen}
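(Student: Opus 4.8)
The plan is to reduce the statement to the classical compactness theorem for normalized $K$-quasiconformal self-homeomorphisms of the Riemann sphere. First I would extend each $\psi_k$ to a homeomorphism of $\hat\C=\C\cup\set{\infty}$ by declaring $\psi_k(\infty):=\infty$; this is legitimate because a homeomorphism of $\C$ is proper, hence sends $z\to\infty$ to $\psi_k(z)\to\infty$, and because points are removable for quasiconformality, so the extended map is a $K$-quasiconformal self-homeomorphism of $\hat\C$ with the three-point normalization $\psi_k(0)=0$, $\psi_k(1)=1$, $\psi_k(\infty)=\infty$. The family of all such maps is equicontinuous on $\hat\C$ with respect to the spherical metric, with a modulus of continuity depending only on $K$ (a standard distortion estimate of Mori type; see \cite[Chapter~4]{imayoshi}). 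By Ascoli--Arzel\`a, a subsequence converges uniformly on $\hat\C$ to a continuous map $\psi_\infty$, and the normalization theorem for quasiconformal mappings ensures that $\psi_\infty$ is again a $K$-quasiconformal self-homeomorphism of $\hat\C$ fixing $0$, $1$, $\infty$ — the three pinned points being exactly what prevents the limit from degenerating to a constant or a non-injective map.

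Since $\psi_\infty$ fixes $\infty$, it restricts to a $K$-quasiconformal homeomorphism of $\C$ with $\psi_\infty(0)=0$ and $\psi_\infty(1)=1$. Moreover, uniform convergence on $\hat\C$ in the spherical metric implies uniform convergence on every compact subset of $\C$ in the Euclidean metric (the two metrics being bi-Lipschitz equivalent away from $\infty$), so $\psi_k\to\psi_\infty$ in $C^0_{loc}(\C)$.

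For the inverses, I would note that each $\psi_k^{-1}$, again extended by fixing $\infty$, is a $K$-quasiconformal self-homeomorphism of $\hat\C$ with the same three-point normalization, so after passing to a further subsequence the same argument gives $\psi_k^{-1}\to\chi$ uniformly on $\hat\C$, for some $K$-quasiconformal homeomorphism $\chi$ fixing $0$, $1$, $\infty$. It then remains to identify $\chi$ with $\psi_\infty^{-1}$: for any $w\in\hat\C$ we have $\psi_k(\psi_k^{-1}(w))=w$, and
\[
\babs{w-\psi_\infty(\chi(w))}\le\babs{\psi_k(\psi_k^{-1}(w))-\psi_k(\chi(w))}+\babs{\psi_k(\chi(w))-\psi_\infty(\chi(w))};
\]
the second term tends to $0$ by the uniform convergence $\psi_k\to\psi_\infty$, while the first tends to $0$ by the equicontinuity of $\set{\psi_k}$ together with $\psi_k^{-1}(w)\to\chi(w)$. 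Hence $\psi_\infty\circ\chi=\text{id}_{\hat\C}$, so $\chi=\psi_\infty^{-1}$, and as above this convergence holds in $C^0_{loc}(\C)$.

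The main obstacle is the normalization step, i.e. showing that the locally uniform limit of the $\psi_k$ is still a homeomorphism and still $K$-quasiconformal, rather than constant or merely continuous. This is precisely the content of the classical normalization and compactness theorem for quasiconformal mappings, which combines the equicontinuity estimate above with a lower distortion bound that keeps the three pinned points at definite mutual spherical distance and forbids collapse; I would invoke it directly from \cite[Chapter~4]{imayoshi} rather than reproduce its proof.
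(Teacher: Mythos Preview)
Your argument is correct, but it follows a different and more classical route than the paper's own proof.

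You extend each $\psi_k$ to $\hat\C$ by fixing $\infty$, obtain a three-point normalized family of $K$-quasiconformal self-maps of the sphere, and then invoke the standard compactness theorem for such families (equicontinuity in the spherical metric plus the fact that a locally uniform limit of $K$-quasiconformal homeomorphisms fixing three points is again a $K$-quasiconformal homeomorphism). The paper instead avoids citing this compactness theorem as a black box: it follows the construction in \cite[Theorem~4.30]{imayoshi} and decomposes each $\psi_k$ as a composition $h_k\circ g_k$, where $g_k$ and $h_k$ are associated to Beltrami coefficients supported in fixed compact sets (one via an inversion). For such maps the explicit H\"older estimates (4.21) and (4.24) of \cite{imayoshi} apply directly, giving equicontinuity of the pieces and of their inverses; Ascoli--Arzel\`a then yields subsequential convergence to homeomorphisms $g_\infty$, $h_\infty$. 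Finally, the paper verifies by hand that $\psi_\infty=h_\infty\circ g_\infty$ is $K$-quasiconformal, via weak $W^{1,2}$-convergence and the weak continuity of the Jacobian. Your approach is shorter and more conceptual; the paper's is more self-contained relative to the specific reference \cite{imayoshi} and makes explicit why the limit inherits the distortion bound. One caution on your citation: the compactness statement you need (limits of three-point-normalized $K$-quasiconformal maps are $K$-quasiconformal homeomorphisms) is classical but may be more directly available in \cite{lehto} than in Chapter~4 of \cite{imayoshi}.
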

	
	\begin{proof}[Proof of Lemma \ref{qchomcptaux}]
		Let $\mu_k\in\mathcal{E}_K$ be defined by $\de_{\bar z}\psi_k=\mu_k\de_{z}\psi_k$.\footnote{Actually, the coefficient $\mu_k$ is uniquely determined a.e., as $\de_z\psi_k\neq 0$ a.e. (this follows from $\operatorname{id}_\C=\psi_k^{-1}\circ\psi_k$ and the chain rule \cite[Lemma~III.6.4]{lehto}, together with $\abs{\de_{\bar z}\psi_k}\le\abs{\de_z\psi_k}$).} Existence and uniqueness of a $K$-quasiconformal homeomorphism satisfying this equation and the normalization conditions is shown in \cite[Theorem~4.30]{imayoshi}.
		
		Given $M>0$, we consider the set $\mathcal{E}_K^M:=\set{\mu\in\mathcal{E}_K:\mu=0\text{ a.e. on }\C\setminus B_M^2}$. If $F^\mu$ denotes the normal solution to the equation $\de_{\bar z}F^\mu=\mu\de_z F^\mu$ (in the sense of \cite[Theorem~4.24]{imayoshi}), then $F^\mu$ satisfies estimates (4.21) and (4.24) in \cite{imayoshi}.
		Applying them with the points $0$ and $1$, we infer that also the map $f^\mu:=F^\mu(1)^{-1}F^\mu$ satisfies estimates of the form
		\begin{align}\label{hol1} \abs{f^\mu(z_1)-f^\mu(z_2)}\le C\abs{z_1-z_2}^\alpha+C\abs{z_1-z_2}, \end{align}
		\begin{align}\label{hol2} \abs{z_1-z_2}\le C\abs{f^\mu(z_1)-f^\mu(z_2)}^\alpha+C\abs{f^\mu(z_1)-f^\mu(z_2)}, \end{align}
		with $C$ and $0<\alpha<1$ depending only on $K$ and $M$. Given a sequence of homeomorphisms $f_k:\C\to\C$ satisfying these estimates, Ascoli--Arzel\`a theorem applies to $f_k$ and $f_k^{-1}$ and so we can extract a subsequence (not relabeled) such that
		\begin{align*} f_k\to f_\infty, \quad f_k^{-1}\to\tilde  f_\infty\quad\text{in }C^0_{loc}(\C). \end{align*}
		From
		$f_k^{-1}\circ f_k=f_k\circ f_k^{-1}=\operatorname{id}_{\C}$ we get $\tilde f_\infty\circ f_\infty=f_\infty\circ\tilde f_\infty=\operatorname{id}_{\C}$ and thus $f_\infty:\C\to\C$ is a homeomorphism, with $\tilde f_\infty=f_\infty^{-1}$. Also, since $f_k(z),f_k^{-1}(z)\to\infty$ uniformly as $z\to\infty$, we deduce that the canonical extensions $\widehat{f}_k:\widehat\C\to\widehat\C$ converge uniformly to $\widehat{f}_\infty$ and that the same holds for $\widehat{f}_k^{-1}$.
		
		We now closely examine the proof of \cite[Theorem~4.30]{imayoshi}: let $\tilde\mu_k\in\mathcal{E}_K^1$ be given by equation (4.25) in \cite{imayoshi}, with $\mu_k\uno_{\C\setminus B_1^2}$ in place of $\mu$, and
		\begin{align*}
		g_k:\widehat\C\to\widehat\C,\qquad g_k(z):=\widehat{f^{\tilde\mu_k}}(z^{-1})^{-1}.
		\end{align*}
		This map corresponds to the map $f^{\mu_1}$ in the aforementioned proof (with $\mu_k$ in place of $\mu$). The lower bound \eqref{hol2}, applied with $f^{\tilde\mu_k}$ and $z_1:=f^{\tilde\mu_k}(z^{-1})$, $z_2:=0$, shows that $\abs{g_k(z)}$ is bounded above by some $M$, for all $k$ and all $z\in\bar B_1^2$. Hence, defining $\mu_{k,2}$ as in equation (4.27) in \cite{imayoshi} (with $\mu_k$ in place of $\mu$), we get $\mu_{k,2}\in\mathcal{E}_{K}^M$. Calling $h_k:\widehat\C\to\widehat\C$ the associated quasiconformal homeomorphism, normalized so that $h_k(0)=0$ and $h_k(1)=1$, by the above argument we obtain the uniform convergence
		\begin{align*}
		g_k\to g_\infty,\quad g_k^{-1}\to g_\infty^{-1},\quad h_k\to h_\infty,\quad h_k^{-1}\to h_\infty^{-1}
		\end{align*}
		up to subsequences, for suitable homeomorphisms $g_\infty$ and $h_\infty$ of the Riemann sphere $\widehat\C$. Setting $\psi_\infty:=\restr{h_\infty\circ g_\infty}{\C}$ and observing that $\psi_k=\restr{h_k\circ g_k}{\C}$, we get the desired convergence $\psi_k\to\psi_\infty$ and $\psi_k^{-1}\to\psi_\infty^{-1}$ in $C^0_{loc}(\C)$.
		
		Finally, we show that $\psi_\infty$ is a $K$-quasiconformal homeomorphism. Given an open rectangle $R\cptsub\C$, \cite[Lemma~4.12]{imayoshi} gives
		\begin{align*} \mathcal{L}^2(\psi_k(R))=\int_R(\abs{\de_z\psi_k}^2-\abs{\de_{\bar z}\psi_k}^2)\ge\int_R(1-k^2)\abs{\de_z\psi_k}^2\ge (1-k^2)k^{-2}\int_R\abs{\de_{\bar z}\psi_k}^2, \end{align*}
		where $k:=\frac{K-1}{K+1}$. Since $\mathcal{L}^2(\psi_k(R))\to\mathcal{L}^2(\psi_\infty(R))$, we deduce that $\psi_k$ is bounded in $W^{1,2}(R)$, thus $\psi_\infty$ is the limit of $\psi_k$ in the weak $W^{1,2}_{loc}(\C)$-topology.
		Given $\rho,\psi^1,\psi^2\in C^\infty_c(\C)$, integration by parts shows that
		\begin{align}\label{detweak}
		\int_\C\rho(\de_1\psi^1\de_2\psi^2-\de_2\psi^1\de_1\psi^2)
		=-\int_\C(\de_1\rho\psi^1\de_2\psi^2-\de_2\rho\psi^1\de_1\psi^2).
		\end{align}
		Writing $\psi_k=\varphi_k^1+i\psi_k^2$, a standard density argument shows that \eqref{detweak} still holds with $\psi^1,\psi^2$ replaced by $\psi_k^1,\psi_k^2$, for $k\in\N\cup\set{\infty}$. Hence, observing that $\abs{\de_z\psi_k}^2-\abs{\de_{\bar z}\psi_k}^2=(\de_1\psi_k^1\de_2\psi_k^2-\de_2\psi_k^1\de_1\psi_k^2)$, we get
		\begin{align}\label{convdet} \int_\C\rho(\abs{\de_z\psi_k}^2-\abs{\de_{\bar z}\psi_k}^2)\to\int_\C\rho(\abs{\de_z\psi_\infty}^2-\abs{\de_{\bar z}\psi_\infty}^2). \end{align}
		Defining the positive measures $\nu_k:=(\abs{\de_z\psi_k}^2-\abs{\de_{\bar z}\psi_k}^2)\mathcal{L}^2$, up to further subsequences we can assume that $\nu_k\weakstarto\nu_\infty$ as Radon measures. For any rectangle $R$ such that $\nu_\infty(\de R)=0$, approximating $\uno_R$ from above and below with smooth functions and applying \eqref{convdet} we get
		\begin{align*} \int_R(\abs{\de_z\psi_k}^2-\abs{\de_{\bar z}\psi_k}^2)\to\int_R(\abs{\de_z\psi_\infty}^2-\abs{\de_{\bar z}\psi_\infty}^2). \end{align*}
		By monotonicity of both sides, this actually holds for every rectangle $R$. On the other hand, by lower semicontinuity of the $L^2$-norm,
		\begin{align*} \int_R(1-k^2)\abs{\de_z\psi_\infty}^2&\le\liminf_{k\to\infty}\int_R(1-k^2)\abs{\de_z\psi_k}^2\le\lim_{k\to\infty}\int_R(\abs{\de_z\psi_k}^2-\abs{\de_{\bar z}\psi_k}^2) \\
		&=\int_R(\abs{\de_z\psi_\infty}^2-\abs{\de_{\bar z}\psi_\infty}^2). \end{align*}
		Since $R$ is arbitrary, we get $\abs{\de_{\bar z}\psi_\infty}\le k\abs{\de_z\psi_\infty}$ a.e., as desired.
	\end{proof}		
	
	\begin{corollary}[compactness of $\bm{\mathcal{D}_K}$]\label{qchomcpt}
		Given a sequence $\varphi_k\in\mathcal{D}_K$, there exists $\varphi_\infty\in\mathcal{D}_K$ such that, up to subsequences,
		$\varphi_k\to\varphi_\infty$ and $\varphi_k^{-1}\to\varphi_\infty^{-1}$ in $C^0_{loc}(\C)$.
	\end{corollary}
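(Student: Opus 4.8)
The plan is to deduce the corollary from Lemma \ref{qchomcptaux} by a change of normalization, since the only difference between $\mathcal{D}_K$ and the class of maps considered there is that a map $\varphi\in\mathcal{D}_K$ satisfies $\min_{|x|=1}|\varphi(x)|=1$ in place of $\varphi(1)=1$, and this gap can be bridged by composing with rotations of the domain and of the codomain.

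Concretely, for each $k$ I would pick $x_k\in\de B_1^2$ with $|\varphi_k(x_k)|=1$ (such a point exists by compactness of $\de B_1^2$ and continuity of $\varphi_k$) and set $\psi_k(z):=\varphi_k(x_k)^{-1}\varphi_k(x_k z)$. Since $z\mapsto x_k z$ and $w\mapsto\varphi_k(x_k)^{-1}w$ are rotations (both $x_k$ and $\varphi_k(x_k)$ have modulus $1$), $\psi_k$ is again a $K$-quasiconformal homeomorphism of $\C$, and by construction $\psi_k(0)=0$ and $\psi_k(1)=1$; moreover $|\psi_k(\bar x_k x)|=|\varphi_k(x)|$ for $|x|=1$, so $\min_{|y|=1}|\psi_k(y)|=1$ for every $k$. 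Lemma \ref{qchomcptaux} then provides, up to a subsequence, a $K$-quasiconformal homeomorphism $\psi_\infty$ with $\psi_\infty(0)=0$, $\psi_\infty(1)=1$ and $\psi_k\to\psi_\infty$, $\psi_k^{-1}\to\psi_\infty^{-1}$ in $C^0_{loc}(\C)$.

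To recover the limiting map in $\mathcal{D}_K$, I would pass to a further subsequence so that $x_k\to x_\infty\in\de B_1^2$ and $\varphi_k(x_k)\to c_\infty$ with $|c_\infty|=1$, and set $\varphi_\infty(z):=c_\infty\psi_\infty(\bar x_\infty z)$. Writing $\varphi_k(z)=\varphi_k(x_k)\psi_k(\bar x_k z)$ and $\varphi_k^{-1}(w)=x_k\psi_k^{-1}(\overline{\varphi_k(x_k)}\,w)$, and using the local uniform convergence of $\psi_k$ and $\psi_k^{-1}$ together with the (local) uniform continuity of $\psi_\infty$ and $\psi_\infty^{-1}$, one obtains $\varphi_k\to\varphi_\infty$ and $\varphi_k^{-1}\to\varphi_\infty^{-1}$ in $C^0_{loc}(\C)$. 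Finally $\varphi_\infty$ is $K$-quasiconformal (a composition of $\psi_\infty$ with rotations) and satisfies $\varphi_\infty(0)=0$ and $\min_{|x|=1}|\varphi_\infty(x)|=\min_{|y|=1}|\psi_\infty(y)|$, so the last point is that this minimum equals $1$: this follows because $\psi_k\to\psi_\infty$ uniformly on the compact set $\de B_1^2$, hence $\min_{|y|=1}|\psi_k(y)|\to\min_{|y|=1}|\psi_\infty(y)|$, while the left-hand side is identically $1$. Thus $\varphi_\infty\in\mathcal{D}_K$, as desired.

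There is no genuine obstacle here beyond bookkeeping — all the analytic content sits in Lemma \ref{qchomcptaux} — and the only points deserving a moment's attention are that composing the convergent sequence $\psi_k$ with the varying rotations (and scalars) $x_k,\varphi_k(x_k)$ preserves the local uniform convergence of both the maps and their inverses, and that the $\mathcal{D}_K$-normalization $\min_{|x|=1}|\varphi_\infty(x)|=1$ survives in the limit, which is just continuity of $\psi\mapsto\min_{\de B_1^2}|\psi|$ under uniform convergence on $\de B_1^2$.
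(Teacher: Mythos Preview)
Your proof is correct and follows essentially the same strategy as the paper: reduce to Lemma~\ref{qchomcptaux} by renormalizing, extract a convergent subsequence, then undo the renormalization and check the limit lies in $\mathcal{D}_K$. The only cosmetic difference is that you renormalize by pre- and post-composing with explicit rotations, whereas the paper lets $\psi_k$ be the normal solution to the same Beltrami equation and observes (via the chain rule) that $\psi_k=\lambda_k\varphi_k$ for some nonzero scalar $\lambda_k$; both devices accomplish the same reduction.
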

	
	\begin{proof}[Proof of Corollary \ref{qchomcpt}]
		Let $\mu_k\in\mathcal{E}_K$ be defined by $\de_{\bar z}\varphi_k=\mu_k\de_{z}\varphi_k$ for all $k$ and let $\psi_k:\C\to\C$ be the unique $K$-quasiconformal homeomorphism satisfying the same differential equation, as well as $\psi_k(0)=0$, $\psi_k(1)=1$ (see \cite[Theorem~4.30]{imayoshi}).
		
		By Lemma \ref{qchomcptaux}, up to subsequences there exists a $K$-quasiconformal homeomorphism $\psi_\infty$ such that $\psi_k\to\psi_\infty$ and $\psi_k^{-1}\to\psi_\infty^{-1}$ in $C^0_{loc}(\C)$.
		
		By the chain rule (see \cite[Lemma~III.6.4]{lehto}), the map $\psi_k\circ\varphi_k^{-1}:\C\to\C$ is a biholomorphism and fixes the origin, so it equals the multiplication by a nonzero complex number $\lambda_k$, i.e. $\psi_k=\lambda_k\varphi_k$. On the other hand,
		\begin{align*}
		\abs{\lambda_k}=\min_{x\in\de B_1^2}\abs{\psi_k(x)}\to\min_{x\in\de B_1^2}\abs{\psi_\infty(x)}\in(0,\infty).
		\end{align*}
		Hence, up to further subsequences we can suppose that $\lambda_k\to\lambda_\infty\in\C\setminus\set{0}$. The statement follows with $\varphi_\infty:=\lambda_\infty^{-1}\psi_\infty$.
	\end{proof}

	\begin{rmk}
		In general, given $\varphi_k\in\mathcal{D}_K$ (for $k\in\N\cup\set{\infty}$) with $\varphi_k\to\varphi_\infty$ and $\varphi_k^{-1}\to\varphi_\infty^{-1}$ locally uniformly, it is not true that the corresponding Beltrami coefficients satisfy $\mu_k\weakstarto\mu_\infty$ in $L^\infty(\C)$. For instance, let $\mu_0(z):=\mz$ if $\Re(z)\in\bigcup_{n\in\Z}\brapa{n,n+\mz}$ and $\mu_0(z):=-\mz$ otherwise. Then the bi-Lipschitz homeomorphism $\psi_0:\C\to\C$ given by
		\begin{align*}
			&\psi_0(x+iy):=\begin{cases}
			n+\frac{9}{5}(x-n)+\frac{3}{5}iy=n+\frac{6}{5}(z-n)+\frac{3}{5}(\bar z-n) & n \le x \le n+\mz \\
			n+\frac{4}{5}+\frac{x-n}{5}+\frac{3}{5}iy=n+\frac{4}{5}+\frac{2}{5}(z-n)-\frac{1}{5}(\bar z-n) & n+\mz \le x \le n+1
			\end{cases}
		\end{align*}
		satisfies $\de_{\bar z}\psi_0=\mu_0\de_z\psi_0$, with the normalization $\psi_0(0)=0$ and $\psi_0(1)=1$. So $\mu_k:=\mu_0(2^{k}\cdot)$ and $\psi_k:=2^{-k}\psi_0(2^k\cdot)$ satisfy $\de_{\bar z}\psi_k=\mu_k\de_z\psi_k$ with the same normalization. Moreover, they converge uniformly to $\psi_\infty(x+iy)=x+\frac{3}{5}iy=\frac{4}{5}z+\frac{1}{5}\bar z$, together with their inverses. The homeomorphism $\psi_\infty$ satisfies $\de_{\bar z}\psi_\infty=\mu_\infty\de_z\psi_\infty$ with $\mu_\infty:=\frac{1}{4}$, but $\mu_k\weakstarto 0$. Dividing each $\psi_k$ by $\min_{\abs{z}=1}\abs{\psi_k(z)}$, we obtain a counterexample in the class $\mathcal{D}_{3}$.
	\end{rmk}

	\nocite{*}
	\bibliographystyle{plainnat}

\end{document}